\documentclass[11pt]{amsart}

\usepackage{amsmath,amsthm,amsfonts,amssymb, amscd}
\usepackage[margin=1in]{geometry}
\usepackage[all,cmtip]{xy}
\usepackage{tikz}
\usepackage{hyperref}
\usepackage{verbatim}
\usepackage{todonotes}

\newenvironment{enumalph}{\begin{enumerate}  }{\end{enumerate}}

\newcommand{\set}[1]{\left\{ #1 \right\}}
\newcommand{\abs}[1]{\left| #1 \right|}

\newcommand{\wt}[1]{\widetilde{ #1}}

\newcommand{\ol}[1]{\overline{#1}}

\DeclareMathOperator{\Dist}{Dist}

\DeclareMathOperator{\Ext}{Ext}

\DeclareMathOperator{\gr}{gr}
\newcommand{\grbar}{\ol{\gr}\ }

\DeclareMathOperator{\opH}{H}
\DeclareMathOperator{\Hom}{Hom}

\DeclareMathOperator{\im}{Im}

\DeclareMathOperator{\Lie}{Lie}
\newcommand{\modG}{\mathrm{mod}(G)}
\DeclareMathOperator{\rad}{rad}

\DeclareMathOperator{\res}{res}
\DeclareMathOperator{\soc}{soc}

\DeclareMathAlphabet{\mathpzc}{OT1}{pzc}{m}{it}

\newcommand{\E}{\mathbb{E}}
\newcommand{\F}{\mathbb{F}}
\newcommand{\N}{\mathbb{N}}

\newcommand{\Z}{\mathbb{Z}}

\newcommand{\calR}{\mathcal{R}}

\newcommand{\fraku}{\mathfrak{u}}
\newcommand{\fru}{\mathfrak{u}}

\newcommand{\Fp}{\F_p}
\newcommand{\Fq}{\F_q}

\newcommand{\ofp}{\otimes_{\Fp}}

\newcommand{\ufp}{\fru_{\Fp}}
\newcommand{\ufq}{\fru_{\Fq}}

\newcommand{\Rufq}{\calR_{\Fq/\Fp}(\ufq)}

\newcommand{\Gfp}{G(\Fp)}
\newcommand{\Gfq}{G(\Fq)}

\newcommand{\Bfq}{B(\Fq)}

\newcommand{\Ufq}{U(\Fq)}
\newcommand{\Tfp}{T(\Fp)}
\newcommand{\Tfq}{T(\Fq)}


\numberwithin{equation}{subsection}

\newtheorem{theorem}{Theorem}[subsection]
\newtheorem{proposition}[theorem]{Proposition}

\newtheorem{corollary}[theorem]{Corollary}

\newtheorem{lemma}[theorem]{Lemma}

\newtheorem*{theorem*}{Theorem}
\newtheorem*{lemma*}{Lemma}

\theoremstyle{definition}

\newtheorem{definition}[theorem]{Definition}
\newtheorem{remark}[theorem]{Remark}

\title[First Cohomology Groups with Small Dominant Weights]{First Cohomology for Finite Groups of Lie Type: \\ Simple Modules with Small Dominant Weights}

\thanks{The members of the University of Georgia VIGRE Algebra Group are Brian D.\ Boe, Adrian M.\ Brunyate, Jon F.\ Carlson, Leonard Chastkofsky, Christopher M.\ Drupieski, Niles Johnson, Benjamin F.\ Jones, Wenjing Li, Daniel K.\ Nakano, Nham Vo Ngo, Duc Duy Nguyen, Brandon L.\ Samples, Andrew J.\ Talian, Lisa Townsley, and Benjamin J.\ Wyser.
}

\date{\today}

\author{University of Georgia VIGRE Algebra Group}

\address{
Department of Mathematics \\
University of Georgia \\
Athens, GA~30602-7403}

\address{
Benjamin F. Jones\\
Department of Mathematics, Statistics, and Computer Science\\
University of Wisconsin--Stout\\
Menomonie, Wisconsin 54751}

\subjclass[2010]{Primary 20G10; Secondary 20G05.}

\begin{document}

\begin{abstract} Let $k$ be an algebraically closed field of characteristic $p > 0$, and let $G$ be a simple, simply connected algebraic group defined over $\mathbb{F}_p$. Given $r \geq 1$, set $q=p^r$, and let $G(\mathbb{F}_q)$ be the corresponding finite Chevalley group. In this paper we investigate the structure of the first cohomology group $\operatorname{H}^1(G(\mathbb{F}_q),L(\lambda))$ where $L(\lambda)$ is the simple $G$-module of highest weight $\lambda$. Under certain very mild conditions on $p$ and $q$, we are able to completely describe the first cohomology group when $\lambda$ is less than or equal to a fundamental dominant weight. In particular, in the cases we consider, we show that the first cohomology group has dimension at most one. Our calculations significantly extend, and provide new proofs for, earlier results of Cline, Parshall, Scott, and Jones, who considered the special case when $\lambda$ is a minimal nonzero dominant weight. 
\end{abstract}

\maketitle

\section{Introduction}

\subsection{}

Let $k$ be an algebraically closed field of characteristic $p>0$, and let $G$ be a simple, simply-connected algebraic group scheme defined over $\F_p$. Let $F: G \rightarrow G$ be the standard Frobenius map on $G$, and let $F^r$ be the $r$-th iterate of $F$. Set $q = p^r$. The $r$-th Frobenius kernel $G_r$ of $G$ is the scheme-theoretic kernel of $F^r$, and the finite Chevalley group $G(\F_q)$ consists of the fixed points in $G$ under $F^r$. It is well-known that the representation theories of $G$, $G_r$ and $G(\F_q)$ are interrelated (cf.\ \cite{Humphreys:2006,Nakano:2010}), and one can relate the cohomology theories via various spectral sequences and limiting techniques \cite{Cline:1977,Bendel:2001}. Even with our current knowledge of these connections, our understanding of the dimensions of first cohomology groups for finite groups with non-trivial coefficients is limited.

In 1984, Guralnick \cite{Guralnick:1986} stated a conjecture for a universal upper bound on the dimension of first cohomology groups for finite groups with coefficients in a faithful simple module. Since that time counterexamples have been found to the strong form of Guralnick's conjecture. For example, Scott \cite{Scott:2003} and others have shown that there exist simple modules for finite Chevalley groups for which the first cohomology group has dimension at least 3. Guralnick in work with Aschbacher 
\cite{Aschbacher:1984} and Hoffman \cite{Guralnick:1998} provided an upper bound on the the dimension of the aforementioned first cohomology group by one-half times the dimension of the given module. More recently, for 
semisimple algebraic groups, Cline, Parshall and Scott have demonstrated an upper bound (depending only on the associated root system of the group) for the first cohomology with coefficients in a simple module. Further work along these lines 
is provided in \cite{PS:2009b}. Results in the cross characteristic case for the dimension of the first cohomology group were proved by Guralnick and Tiep \cite{Guralnick:2009}.

In this paper we investigate the structure of the cohomology group $\opH^1(G(\F_q),L(\lambda))$ where $L(\lambda)$ is a simple $G(\F_q)$-module. In order to understand how the corresponding algebraic group and Lie algebra cohomology are related to this computation we use the powerful filtration techniques developed by Lin and Nakano \cite{Lin:1999} and extended more recently by Friedlander \cite{Friedlander:2010} using the idea of Weil restriction. This approach enables us to employ knowledge about the geometry of the flag variety $G/B$, in particular, Andersen's famous results \cite{Andersen:1984a} on the socle of the sheaf cohomology group ${\mathcal H}^{1}(G/B,{\mathcal L}(\mu))$ for $\mu$ an arbitrary weight.  

With our machinery we are able to give a complete description of $\opH^1(G(\F_q),L(\lambda))$ when $p>2$ for $\Phi=A_n$, $D_n$, $p>3$ for $\Phi=B_n$, $C_n$, $E_6$, $E_7$, $F_4$, $G_2$, $p>5$ for  $\Phi = E_8$, $q>3$,  and $\lambda$ is a fundamental dominant weight. Under certain additional mild restrictions we can improve these results to calculate this cohomology group when $\lambda$ is less than or equal to a fundamental dominant weight. In particular, we show under these restrictions that if $\lambda$ is less than or equal to a fundamental dominant weight, then $\dim \opH^1(G(\F_q),L(\lambda)) \leq 1$.\footnote{Our techniques can also be employed to make calculations for larger dominant weights, provided that one either possesses more detailed information about the structure of the cohomology group $\opH^{1}(U_{1},L(\lambda))$, or that one imposes stronger restrictions on $p$ and $q$.} One can view our results in the framework of Guralnick's conjecture on the size of the first cohomology group when the coefficient module is taken in a certain subcollection of simple modules.

Our work extends the seminal results of Cline, Parshall and Scott \cite{Cline:1975}, of Jones \cite{Jones:1975}, and of Jones and Parshall \cite{JonesParshall:76}, where they considered the special case that $\lambda$ is a minimal nonzero dominant weight. The computations in \cite{Cline:1975} were used in Wiles' proof of Fermat's Last Theorem \cite{Wiles:1995} to show that certain deformation spaces of modular forms and elliptic curves have the same dimension. Our work uses completely different methods than those in \cite{Cline:1975} and may have connections and uses for other number theoretic questions.

\subsection{Main results}

The main results of the paper are stated below. Let $k$ be an algebraically closed field of characteristic $p > 0$, and let $G$ be a simple, simply-connected algebraic group over $k$, which is defined over the field $\F_p$ and with associated root system $\Phi$. Let $r \geq 1$ and set $q = p^r$. For a complete explanation of notation, see Section \ref{subsection:notation}.

Cline, Parshall, Scott and van der Kallen \cite[Theorem 7.4]{Cline:1977} proved that the restriction map $\opH^1(G,L(\lambda)) \rightarrow \opH^1(\Gfq,L(\lambda))$ is injective when $\lambda$ is a $p^{r}$-restricted weight (i.e., $\lambda\in X_{r}(T)$). Our first main result, proved in Section \ref{subsection:isowithG}, is that the restriction map is an isomorphism when $\lambda$ is less than or equal to a fundamental dominant weight, and when the prime $p$ satisfies some fairly mild restrictions depending on the root system.

\begin{theorem} \label{maintheorem1}
Assume that $p>2$ when $\Phi=A_n$, $D_n$, $p>3$ when $\Phi=B_n$, $C_n$, $E_6$, $E_7$, $F_4$, $G_2$, and $p>5$ when $\Phi = E_8$. Suppose $\lambda \leq \omega_{j}$ for some $j$. If $q>3$, then the restriction map 
\[
 \res :\opH^{1}(G,L(\lambda))\rightarrow \opH^{1}(\Gfq,L(\lambda))
\]
is an isomorphism. 
\end{theorem}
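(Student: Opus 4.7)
The plan is to prove surjectivity of the restriction map, since injectivity already follows from the cited Cline-Parshall-Scott-van der Kallen theorem. The strategy is to restrict from $\Gfq$ to the finite Borel $\Bfq$ and then to $\Ufq$, bound $\dim \opH^{1}(\Gfq,L(\lambda))$ from above via Lie-algebra cohomology of Friedlander's Weil restriction, and match this upper bound against $\opH^{1}(G,L(\lambda))$ using Andersen's description of the socle of ${\mathcal H}^{1}(G/B,\calL(\mu))$.

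The first step is to show that the restriction $\opH^{1}(\Gfq, L(\lambda)) \hookrightarrow \opH^{1}(\Bfq, L(\lambda))$ is injective. I would either cite this as a standard finite-group analogue of the algebraic-group statement that $\opH^{1}(G,-) \to \opH^{1}(B,-)$ is injective on dominant-highest-weight modules, or verify it directly by showing that a nontrivial $\Gfq$-extension of $k$ by $L(\lambda)$ cannot split as a $\Bfq$-module. Next, since $\gcd(|\Tfq|,p)=1$, the Lyndon-Hochschild-Serre spectral sequence for $\Ufq \trianglelefteq \Bfq$ collapses and yields
\[
 \opH^{1}(\Bfq, L(\lambda)) \cong \opH^{1}(\Ufq, L(\lambda))^{\Tfq}.
\]
The hypothesis $q > 3$ enters at this point: it ensures that the characters of $\Tfq$ separate the weights of $L(\lambda)$ finely enough that no spurious $\Tfq$-invariants arise from distinct weights of $L(\lambda)$ becoming congruent modulo $q-1$.

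The main computation is then $\opH^{1}(\Ufq, L(\lambda))^{\Tfq}$. Under the listed prime restrictions, $L(\lambda)$ lies in a mild weight range, and the Friedlander-Lin-Nakano machinery identifies $\opH^{1}(\Ufq, L(\lambda))$ with the restricted Lie-algebra cohomology $\opH^{1}(\Rufq, L(\lambda))$. The filtration of $\Rufq$ by Frobenius twists of $\ufp$ produces a spectral sequence whose $E_{1}$-page is controlled by $\opH^{1}(\ufp, L(\lambda))$, and the latter, via Kempf vanishing and Andersen's theorem, simultaneously computes $\opH^{1}(G, L(\lambda))$. The hard part I expect will be the final comparison: one must show that the Weil-restriction filtration contributes no additional $\Tfq$-fixed classes beyond those already visible over $\F_p$. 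This is exactly where the case-by-case small-prime hypotheses should intervene — they ensure that certain $G$-cohomology groups $\opH^{1}(G, L(\mu))$ attached to the Frobenius-twisted weights $\mu$ that arise in the filtration all vanish, so that the only surviving contributions are those already accounting for $\opH^{1}(G, L(\lambda))$.
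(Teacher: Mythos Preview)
Your overall strategy---injectivity from Cline--Parshall--Scott--van der Kallen, then an upper bound on $\dim \opH^{1}(\Gfq,L(\lambda))$ via Lie-algebra cohomology matched against $\dim \opH^{1}(G,L(\lambda))$---is the paper's strategy. But your sketch of the ``hard part'' misidentifies both the object to be analyzed and the mechanism by which the prime restrictions enter.

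First, the Lin--Nakano/Friedlander machinery does \emph{not} identify $\opH^{1}(\Ufq,L(\lambda))$ with $\opH^{1}(\Rufq,L(\lambda))$: the May spectral sequence only exhibits the former as a \emph{subquotient} of the latter. And $\Rufq\otimes_{\Fp}k$ is $\fru^{\oplus r}$, a direct sum, not a filtration by Frobenius twists of $\ufp$. The paper uses this to build an injective map $\Gamma:\opH^{1}(\Gfq,L(\lambda))\hookrightarrow \opH^{1}(U_{1},L(\lambda))^{\Tfq}$ (passing through $\gr$ and then restricting from $u(\fru^{\oplus r})$ to a single copy $u(\fru)$ via an LHS argument); already proving $\Gamma$ injective uses $q>3$ and $\lambda\le\omega_{j}$.

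Second, and more seriously, the decisive technical input is not vanishing of $\opH^{1}(G,L(\mu))$ for Frobenius-twisted $\mu$. It is a structural computation of $\opH^{1}(U_{1},L(\lambda))$ as a $B/U_{1}$-module. Andersen's results on $\Ext^{1}_{B}$ determine its \emph{socle} to be $\bigoplus_{\alpha\in\Delta}(-s_{\alpha}\cdot\lambda^{*})\ \oplus\ \bigoplus_{\sigma\uparrow\lambda^{*}}(-\sigma)^{\oplus m_{\sigma}}$ with $m_{\sigma}=\dim\Ext^{1}_{G}(L(\lambda^{*}),H^{0}(\sigma))$. The prime restrictions are used to prove that this module is \emph{semisimple}, i.e., equals its socle: one embeds it into the injective hull of its socle and rules out, by a weight/rank-$\le 2$ case analysis, any contribution from the second socle layer. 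Only then does one take $\Tfq$-invariants: for $q>3$ the terms $-s_{\alpha}\cdot\lambda^{*}$ are never $\Tfq$-invariant, and among the $\sigma\uparrow\lambda^{*}$ only $\sigma=0$ survives, giving $\dim\opH^{1}(U_{1},L(\lambda))^{\Tfq}=m_{0}=\dim\opH^{1}(G,L(\lambda))$. A commutative square comparing $\res$ with $\Gamma$ then forces $\res$ to be an isomorphism. Your proposal does not contain this semisimplicity step, and without it there is no way to pin down $\dim\opH^{1}(U_{1},L(\lambda))^{\Tfq}$.
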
 

With this isomorphism we are able to compute the cohomology for $\Gfq$ with coefficients in a simple module having fundamental highest weight. 

\begin{theorem} \label{maintheorem2}
Assume that $p>2$ when $\Phi=A_{n}$, $D_{n}$, $p>3$ when $\Phi=B_{n}$, $C_{n}$, $E_{6}$, $E_{7}$, $F_{4}$, $G_{2}$, and $p>5$ when $\Phi=E_{8}$. Suppose $q>3$. Then $\opH^{1}(\Gfq,L(\omega_{j}))=0$ except for the following cases: 
\begin{enumalph} 
\item $\Phi$ has type $C_n$, $n \geq 3$, $(n+1) = \sum_{i=0}^t b_ip^i$ with $0 \leq b_i < p$ and $b_t \neq 0$, and $\lambda = \omega_j$ with $j = 2b_ip^i$ for some $0 \leq i < t$ with $b_i \neq 0$; 
\item $\Phi$ is of type $E_{7}$, $p=7$ and $j=6$. 
\end{enumalph} 
In both cases (a) and (b), we have $\opH^1(\Gfq,L(\lambda)) \cong k$. 
\end{theorem}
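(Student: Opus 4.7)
The plan is to combine Theorem~\ref{maintheorem1} with a direct calculation of $\opH^{1}(G,L(\omega_{j}))$ at the algebraic-group level. Taking $\lambda=\omega_{j}$ in Theorem~\ref{maintheorem1} supplies an isomorphism
\[
\opH^{1}(\Gfq,L(\omega_{j}))\;\cong\;\opH^{1}(G,L(\omega_{j})),
\]
so the task reduces to computing first cohomology of the algebraic group.

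To compute the right-hand side, I would use the short exact sequence
\[
0\longrightarrow L(\omega_{j})\longrightarrow H^{0}(\omega_{j})\longrightarrow Q\longrightarrow 0,
\]
where $H^{0}(\omega_{j})=\ind_{B}^{G}\omega_{j}$ and $Q$ is the cokernel. Because $H^{0}(\omega_{j})$ has a good filtration, $\Ext^{i}_{G}(k,H^{0}(\omega_{j}))=0$ for all $i\geq 1$; combined with $\opH^{0}(G,H^{0}(\omega_{j}))=0$ (which holds since $\omega_{j}\neq 0$ and $\soc_{G}H^{0}(\omega_{j})=L(\omega_{j})$), the long exact sequence collapses to
\[
\opH^{1}(G,L(\omega_{j}))\;\cong\;\Hom_{G}(k,Q).
\]
The problem thus becomes: when does the second socle layer of $H^{0}(\omega_{j})$ contain a copy of the trivial module $L(0)$, and with what multiplicity?

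The linkage principle immediately restricts candidate $\omega_{j}$ to those lying in the dot-orbit $W_{p}\cdot 0$. Under the stated $p$-restrictions, sweeping this orbit against the list of fundamental weights eliminates all but a handful of cases, which I would examine root system by root system. In types $A_{n}$, $D_{n}$, $B_{n}$, $F_{4}$, $G_{2}$, $E_{6}$, and $E_{8}$ every candidate $\omega_{j}$ allowed by the hypotheses satisfies $H^{0}(\omega_{j})=L(\omega_{j})$, so $Q=0$ and the cohomology vanishes. For type $C_{n}$ with $n\geq 3$, the Premet--Suprunenko description of the composition factors of the symplectic fundamental modules produces a trivial submodule in $H^{0}(\omega_{j})/L(\omega_{j})$ at precisely the weights of case~(a); for type $E_{7}$ at $p=7$, a linkage computation places $0$ and $\omega_{6}$ in the same $W_{p}$-orbit and forces a trivial composition factor in $H^{0}(\omega_{6})$, giving case~(b).

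The main obstacle is verifying that in cases~(a) and~(b) the trivial factor genuinely lives in $\soc_{G}Q$ rather than buried in a higher Loewy layer, and that its multiplicity is exactly one. For~(a) this is essentially automatic from the recursive Loewy structure of the symplectic fundamental modules (the relevant socle filtration is short enough that every trivial composition factor lies in the socle of the quotient). For~(b) it follows from the known submodule structure of the relevant $E_{7}$-module at $p=7$, whose Jantzen filtration can be controlled using the sum formula. The hypotheses $p>2,3,5$ are precisely the thresholds at which linkage is sharp enough and small-characteristic anomalies (which could introduce additional trivial factors) are avoided, so that the above enumeration is complete.
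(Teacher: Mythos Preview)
Your strategy is the paper's strategy: invoke Theorem~\ref{maintheorem1} to pass to $\opH^{1}(G,L(\omega_{j}))$, then analyse the composition structure of $H^{0}(\omega_{j})$ type by type, with linkage cutting the search down to finitely many $(p,\omega_{j})$. The paper's execution differs only in the bookkeeping: it uses Kleshchev--Sheth rather than Premet--Suprunenko for type $C_{n}$; for $E_{7}$, $p=7$, it reads off $\dim H^{0}(\omega_{6})=1+\dim L(\omega_{6})$ from the Gilkey--Seitz tables (so $Q\cong k$ immediately) rather than invoking the sum formula; and it makes the finiteness of the prime search explicit via a separate large-prime vanishing lemma plus a GAP check for $5\le p\le 31$.

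Two places in your write-up deserve tightening. First, linkage never \emph{forces} a trivial composition factor in $H^{0}(\omega_{6})$; it only permits one, and you need the dimension or sum-formula argument you allude to afterwards to actually produce it. Second, your blanket claim that every linked fundamental weight in types $B_{n}$, $F_{4}$, $G_{2}$, $E_{6}$, $E_{8}$ satisfies $H^{0}(\omega_{j})=L(\omega_{j})$ is correct but not all obvious: for $E_{8}$ with $p>5$ the only such pair is $(\omega_{3},p=7)$, and the equality $H^{0}(\omega_{3})=L(\omega_{3})$ there is a specific fact from \cite[\S4.6]{Jantzen:1991} that should be cited rather than asserted.
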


Theorem \ref{maintheorem2} is proved for the classical groups in Section \ref{section:classicalgroups}, and for the exceptional groups in Section \ref{section:exceptionalgroups}. Our techniques are also applicable to the case when the simple coefficient module has highest weight less than or equal to a fundamental dominant weight. To obtain complete results, we must enlarge the prime to $p>7$ for the cases $\Phi=E_{7},E_{8}$. 

\begin{theorem} \label{maintheorem3}
Let $\lambda \in X(T)_+$ be such that $\lambda \leq \omega_j$ for some $j$.  Assume that $q>3$ and 
\begin{center}
\begin{tabular}{ll}
$p > 2$ & if $\Phi$ has type $A_n$, $D_n$; \\
$p > 3$ & if $\Phi$ has type $B_{n}$, $C_{n}$, $E_{6}$, $F_{4}$, $G_2$; and \\
$p > 7$ & if $\Phi$ has type $E_7$ or $E_8$.
\end{tabular}
\end{center}
Then $\opH^1(G(\F_q),L(\lambda)) = 0$ except for the following cases, in which $\opH^1(\Gfq,L(\lambda)) \cong k$.
\begin{enumalph}
\item $\Phi$ has type $C_n$, $n \geq 3$, $(n+1) = \sum_{i=0}^t b_ip^i$ with $0 \leq b_i < p$ and $b_t \neq 0$, and $\lambda = \omega_j$ with $j = 2b_ip^i$ for some $0 \leq i < t$ with $b_i \neq 0$.

\item $\Phi$ has type $F_4$, $p = 13$, and $\lambda = 2\omega_4$.

\item $\Phi$ has type $E_7$, $p = 19$, and $\lambda = 2\omega_1$.

\item $\Phi$ has type $E_8$, $p = 31$, and $\lambda = 2\omega_8$.
\end{enumalph}
\end{theorem}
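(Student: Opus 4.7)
The plan is to reduce to Theorem \ref{maintheorem2} via Theorem \ref{maintheorem1} together with a direct algebraic-group computation for a short list of non-fundamental dominant weights below $\omega_j$. Under the hypotheses, Theorem \ref{maintheorem1} provides an isomorphism $\opH^1(\Gfq,L(\lambda)) \cong \opH^1(G,L(\lambda))$, so it suffices to compute the algebraic-group cohomology. The strengthened bound $p>7$ for types $E_7, E_8$ is imposed precisely so that the list of exceptional cases below remains the clean one stated; at smaller primes, additional intermediate weights become linked to $0$ and would clutter the statement.

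First I would enumerate, for each root system and each $j$, the finite set of dominant weights $\lambda$ with $\lambda \leq \omega_j$. Writing $\omega_j - \lambda$ as a nonnegative integer combination of simple roots and imposing dominance of $\lambda$ produces a short explicit list: it always includes $\lambda = 0$ (for which $\opH^1(\Gfq,k)$ vanishes classically when $q>3$) and $\lambda = \omega_j$ (handled by Theorem \ref{maintheorem2}, reproducing case (a) where relevant), together with a small number of intermediate dominant weights, typically of the form $\omega_i$ with $i \neq j$ (again handled by Theorem \ref{maintheorem2}), or $2\omega_i$, or $\omega_i + \omega_k$, and so on.

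The heart of the argument is the treatment of intermediate non-fundamental weights. The decisive tool is the linkage principle: if $\opH^1(G,L(\lambda)) \neq 0$ then $\lambda$ lies in the dot-orbit $W_p \cdot 0$ of the affine Weyl group. Checking linkage for each candidate $(p,\lambda)$ is a finite Diophantine test, and under the stated bounds on $p$ this eliminates every intermediate non-fundamental candidate except those arising in cases (b), (c), (d). The exceptional primes $13, 19, 31$ there are precisely those for which the corresponding $2\omega_i$ satisfies $2\omega_i = w \cdot 0$ for some $w \in W_p$. For these surviving pairs, I would compute $\opH^1(G,L(\lambda))$ via the short exact sequence
\[
0 \to L(\lambda) \to \opH^0(\lambda) \to \opH^0(\lambda)/L(\lambda) \to 0,
\]
combined with Kempf's vanishing theorem and Andersen's description of the socle of $\mathcal{H}^1(G/B,\mathcal{L}(\mu))$ recalled in the introduction, to identify the cohomology with a one-dimensional $\Hom$ space.

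The main obstacle will be verifying one-dimensionality (rather than vanishing, or a larger dimension) in each of the three exceptional pairs $(F_4, 13)$, $(E_7, 19)$, $(E_8, 31)$. This requires the explicit composition-factor structure of the Weyl module $V(\lambda)$ at the exceptional prime, obtained via Jantzen's sum formula or direct character computations in rank $4, 7, 8$; isolating the multiplicity of $L(\lambda)$ as a composition factor of $\opH^0(\lambda)/L(\lambda)$ in these sporadic situations is where the bulk of the remaining technical work lies.
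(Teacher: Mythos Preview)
Your overall strategy---reduce to $\opH^1(G,L(\lambda))$ via Theorem~\ref{maintheorem1}, enumerate the dominant $\lambda\le\omega_j$, and apply linkage---matches the paper's. There is, however, a genuine gap in the linkage step. You claim that for $p>7$ the linkage test eliminates every intermediate non-fundamental candidate except those in (b), (c), (d). This is false in type $E_8$: when $p=31$ the weights $\omega_7+\omega_8$ and $\omega_6+\omega_8$ are also linked to $0$ under $W_p$ (one has $s_0\cdot 0=2\omega_8$, $(s_0s_8)\cdot 0=\omega_7+\omega_8$, $(s_0s_8s_7)\cdot 0=\omega_6+\omega_8$). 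So linkage alone does not dispose of them, and your short exact sequence plus Jantzen-sum-formula approach would have to be carried out for these two weights as well. The paper handles them by a translation-functor argument: applying $T_0^{\omega_8}$ kills $H^0(\omega_7+\omega_8)$ and $H^0(\omega_6+\omega_8)$ but not $L(0)$, so $L(0)$ cannot occur as a composition factor of either induced module, whence $\opH^1(G,L(\lambda))=0$ in both cases.

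For the three non-vanishing pairs, your proposed route through composition factors of $H^0(\lambda)$ and Jantzen's sum formula would work but is unnecessarily heavy. The paper observes instead that in each of (b), (c), (d) the weight is precisely $s_0\cdot 0$, so the result $\Ext_G^1(L(s_0\cdot 0),k)\cong k$ follows at once from \cite[II.7.19(c)]{Jantzen:2003}; no explicit character or sum-formula computation is needed.
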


Theorem \ref{maintheorem3} is proved in Section \ref{section:exceptionalgroups}.

\subsection{Definitions and notation} \label{subsection:notation}

Much of the notation used here for algebraic groups is standard and can be found in \cite{Jantzen:2003}. Let $k$ be an algebraically closed field of characteristic $p > 2$, and let $G$ be a simple, simply-connected algebraic group over $k$. Let $T \subset G$ be a maximal torus, defined and split over $\F_p$, and let $\Phi$ be the root system of $T$ in $G$.  Let $\Delta = \set{\alpha_1,\ldots,\alpha_n} \subset \Phi$ be a set of simple roots in $\Phi$, and let $\Phi^+$ and $\Phi^-$ be the corresponding systems of positive and negative roots in $\Phi$. In this paper we use the ordering of the simple roots given in \cite{Humphreys:1978}, following Bourbaki. Let $B \subset G$ be the Borel subgroup of $G$ containing $T$ that corresponds to $\Phi^-$, and let $U \subset B$ be the unipotent radical of $B$. Write $W$ for the Weyl group of $\Phi$, and let $w_0$ be the longest element in $W$.

Let $\E$ be the Euclidean space spanned by $\Phi$. It possesses a $W$-invariant inner product, denoted by $(\cdot,\cdot)$. Given $\alpha \in \Phi$, write $\alpha^\vee = 2\alpha/(\alpha,\alpha)$ for the corresponding coroot. Let $\alpha_0$ be the highest short root in $\Phi$, and set $\rho = \frac1{2} \sum_{\alpha \in \Phi^+} \alpha$. Then the Coxeter number associated to $\Phi$ is $h = (\rho,\alpha_0^\vee)+1$. The weight lattice $X(T)$ is the $\Z$-span in $\E$ of the set of fundamental dominant weights $\set{\omega_1,\ldots,\omega_n}$, which are defined by the equations $(\omega_i,\alpha_j^\vee) = \delta_{i,j}$ (Kronecker delta). Given $\lambda \in X(T)$ and $w \in W$, write $\lambda \mapsto w\lambda$ for the usual action of $W$ on $X(T)$, and write $w \cdot \lambda = w(\lambda + \rho) - \rho$ for the dot action of $W$ on $X(T)$. The weight lattice is partially ordered by the relation $\mu \leq \lambda$ if $\lambda - \mu$ is a nonnegative integral combination of simple roots. Write $X(T)_+$ for the set of dominant weights in $X(T)$, and $X_r(T)$ for the set of $p^r$-restricted dominant weights in $X(T)_+$.

Let $F: G \rightarrow G$ be the Frobenius morphism of $G$. For $r \geq 1$ and $q = p^r$, set $\Gfq = G^{F^r}$, the fixed-point subgroup of $G$ under the $r$-th iterate $F^r: G \rightarrow G$, and set $G_r = \ker F^r$, the scheme-theoretic kernel of the map $F^r: G \rightarrow G$. For $H \subset G$ a closed $F$-stable subgroup (scheme) of $G$, write $H(\Fq) = H^{F^r}$ and $H_r = \ker(F^r|_H: H \rightarrow H)$. Since $T$, $B$ and $U$ are closed $F$-stable subgroups of $G$, there are finite subgroups $\Bfq$, $\Ufq$, and $\Tfq$ of $\Gfq$, and the finite subgroup schemes $B_r$, $U_r$, and $T_r$ of $G_r$.

Set $\fru = \Lie(U)$, the Lie algebra of $U$. Then $\fru$ is a $p$-restricted Lie algebra over $k$, and there exists a $p$-restricted Lie algebra $\ufp$ over $\Fp$, obtained via reduction mod $p$ from a Chevalley basis for $\fru$, such that $\fru = \ufp \ofp k$. Set $\ufq = \ufp \ofp \Fq$. Let $u(\fru)$ be the restricted enveloping algebra of $\fru$. Then $u(\fru)$ is isomorphic to $\Dist(U_1)$, the algebra of distributions on the finite group scheme $U_1$ \cite[I.9.6(4)]{Jantzen:2003}. The category of $U_1$-modules is naturally equivalent to the category of $\Dist(U_1) \cong u(\fru)$-modules \cite[I.8.6]{Jantzen:2003}. Henceforth, given a $u(\fru)$-module (equivalently, a $U_1$-module) $M$, we often identify without further comment the spaces $\opH^\bullet(u(\fru),M)$ and $\opH^\bullet(U_1,M)$.

Write $\modG$ to denote the category of rational $G$-modules. Then any $M \in \modG$ is by restriction also a module for $\Gfq$, $G_r$, $U_r$, etc. Given $\lambda \in X(T)_+$, let $L(\lambda)$ be the simple rational $G$-module of highest weight $\lambda$. If $\lambda \in X_r(T)$, then $L(\lambda)$ remains simple upon restriction to $\Gfq$ and upon restriction to $G_r$ \cite[Theorems 2.5 and 2.11]{Humphreys:2006}.

\section{An analysis of \texorpdfstring{$1$}{1}-cohomology for algebraic groups, finite groups, and Lie algebras} \label{section:ananalysis}

Let $M$ be a finite dimensional rational $G$-module. In this section we relate the first cohomology group $\opH^1(\Gfq,M)$ for $\Gfq$ to the corresponding cohomology groups for the algebraic groups $G$ and $U$ and the Frobenius kernel $U_1$. When $M = L(\lambda)$ with $\lambda$ less than or equal to a fundamental dominant weight, we obtain a vanishing criterion for $\opH^1(\Gfq,M)$ in terms of cohomology for $U_1$. Throughout Section \ref{section:ananalysis}, we will assume that $p$ is {\em excellent} for the root system $\Phi$ (cf. \cite[Section 1.4]{Lin:1999}), that is, $p \neq 2$ when $\Phi=B_{n}$, $C_{n}$, $F_{4}$, and $p > 3$ in type $G_2$. Note that $p$ is excellent whenever the requirements on $p$ stated in the main results are satisfied.

\subsection{Reduction to Sylow \texorpdfstring{$p$}{p}-subgroups} 

The first step in establishing the relationship between the cohomology groups $\opH^1(\Gfq,M)$ and $\opH^1(U_1,M)$ is to consider a suitable subspace of the cohomology for the finite subgroup $\Ufq$ of $\Gfq$. Since $\Ufq$ is a Sylow $p$-subgroup of $\Gfq$, the restriction homomorphism $\opH^\bullet(\Gfq,M) \rightarrow \opH^\bullet(\Ufq,M)$ is injective \cite[Proposition 4.2.2]{Evens:1991}. The torus $\Tfq$ acts on the groups $\Gfq$ and $\Ufq$ by conjugation, and the conjugation actions together with the defining action of $\Tfq$ on $M$ induce actions of $\Tfq$ on $\opH^\bullet(\Gfq,M)$ and $\opH^\bullet(\Ufq,M)$. The restriction map in cohomology is then a homomorphism of $\Tfq$-modules. For any group $G'$ and any $kG'$-module $N$, the inner automorphisms of $G'$ all induce the identity map on $\opH^\bullet(G',N)$ \cite[Proposition 4.1.1]{Evens:1991}. Then $\Tfq$ acts trivally on $\opH^\bullet(\Gfq,M)$, and the restriction homomorphism defines for each $n \geq 0$ an injective map 
\begin{equation} \label{eq:sylowinjection}
\opH^n(\Gfq,M)\ \stackrel{ \res }{\hookrightarrow}\ \opH^n(\Ufq,M)^{\Tfq}.
\end{equation}

\subsection{Weil restriction} \label{subsection:Weilrestriction}

The next step in establishing the relationship between $\Gfq$-cohomology and $U_1$-cohomology is to relate cohomology for the finite group $\Ufq$ to cohomology for a suitable restricted Lie algebra. For this we need the Weil restriction functor constructed by Friedlander.

\begin{definition} \textup{\cite[Definition 1.4]{Friedlander:2010}}
Let $r \geq 1$ and set $q = p^r$. Then the Weil restriction $\Rufq$ of the $\Fq$-Lie algebra $\ufq$ is the $\Fp$-Lie algebra obtained by viewing the underlying $\Fq$-vector space of $\ufq$ as an $\Fp$-vector space, and by viewing the $\Fq$-bilinear bracket on $\ufq$ as an $\Fp$-bilinear map on the underlying $\Fp$-vector space. The Weil restriction $\Rufq$ is made a $p$-restricted Lie algebra by considering the $p$-restriction operator on $\ufq$ as a $p$-restriction operator on the underlying $\Fp$-vector space.
\end{definition}

\begin{proposition} \label{proposition:Weilrestriction} \textup{\cite[Proposition 1.7]{Friedlander:2010}}
Let $r \geq 1$ and set $q = p^r$. Then $\Rufq \ofp k \cong \fru^{\oplus r}$ as $p$-restricted Lie algebras over $k$.
\end{proposition}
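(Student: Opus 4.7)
The plan is to exploit the standard decomposition $\Fq\otimes_{\Fp}k\cong k^{\oplus r}$ of $k$-algebras, which holds because $k$ is algebraically closed of characteristic $p$ and $\Fq/\Fp$ is a separable extension of degree $r$. Concretely, writing the $r$ distinct $\Fp$-algebra embeddings of $\Fq$ into $k$ as the maps $a\mapsto F^i(a)$ for $i=0,1,\dots,r-1$ (where $F$ is the $p$-th power Frobenius), the map
\[
\Fq\otimes_{\Fp}k\ \xrightarrow{\sim}\ k^{\oplus r},\qquad a\otimes \alpha\ \longmapsto\ \bigl(F^i(a)\,\alpha\bigr)_{i=0}^{r-1},
\]
is an isomorphism of $k$-algebras. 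Call the image idempotents $e_0,\dots,e_{r-1}$.

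First I would use associativity of tensor product to write
\[
\Rufq\ofp k\ =\ \bigl(\ufp\otimes_{\Fp}\Fq\bigr)\otimes_{\Fp}k\ \cong\ \ufp\otimes_{\Fp}(\Fq\otimes_{\Fp}k)\ \cong\ \ufp\otimes_{\Fp}k^{\oplus r}\ \cong\ \fru^{\oplus r},
\]
with the decomposition as a $k$-vector space induced by the idempotents $e_i$. Explicitly, $x\otimes a\otimes \alpha\in\Rufq\ofp k$ corresponds to the tuple $\bigl(x\otimes F^i(a)\alpha\bigr)_{i}\in\fru^{\oplus r}$.

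Next I would check that this identification respects the Lie bracket. The bracket on $\Rufq$ is, by definition, the $\Fp$-bilinear extension of the $\Fq$-bilinear bracket on $\ufq=\ufp\otimes_{\Fp}\Fq$, namely $[x\otimes a,y\otimes b]=[x,y]\otimes ab$; after extending scalars to $k$ this reads $[x\otimes a\otimes \alpha,\,y\otimes b\otimes \beta]=[x,y]\otimes ab\otimes \alpha\beta$. Under the isomorphism above, the product $ab\otimes\alpha\beta$ in $\Fq\otimes_{\Fp}k$ maps to the componentwise product of the images of $a\otimes\alpha$ and $b\otimes\beta$, because the multiplication in $\Fq\otimes_{\Fp}k$ is compatible with the product structure on $k^{\oplus r}$. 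Consequently the bracket on $\fru^{\oplus r}$ is the componentwise bracket: cross brackets vanish and each summand inherits the original bracket of $\fru$. An entirely parallel check handles the $p$-restriction: using $(x\otimes a)^{[p]}=x^{[p]}\otimes a^p$ on $\ufq$ and the $p$-semilinearity of the restriction map after extending to $k$, the identity $(a\otimes\alpha)^{p}\mapsto\bigl(F^i(a)^p\alpha^p\bigr)_i$ in $k^{\oplus r}$ shows that the $p$-operator on $\Rufq\ofp k$ is componentwise the $p$-operator of $\fru$ on each copy.

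The only step that requires any care is the initial algebra isomorphism $\Fq\otimes_{\Fp}k\cong k^{\oplus r}$ together with the bookkeeping of how $F^i$ twists appear in the idempotent decomposition; everything afterwards is a straightforward compatibility check on tensors of simple form, which then extends by $\Fp$-linearity to the whole algebra. Thus the main obstacle is really only notational — making sure the twist by powers of Frobenius is correctly tracked so that the bracket and $p$-map end up diagonal in the $\fru^{\oplus r}$ decomposition.
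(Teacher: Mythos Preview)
Your argument is correct and is essentially the same as the paper's: both rest on the separable-algebra decomposition $\Fq\otimes_{\Fp}K\cong K^{\times r}$ (the paper takes $K=\Fq$ via the minimal-polynomial/CRT argument and then base-changes to $k$, while you take $K=k$ directly via the Frobenius embeddings), and then check that the bracket and $p$-map become componentwise under the idempotent splitting. Your explicit bookkeeping of the Frobenius twists and the $p$-semilinearity of the restriction map is a bit more detailed than the paper's, but the route is the same.
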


\begin{proof}
Write $\ufq \ofp \Fq = (\ufp \ofp \Fq) \ofp \Fq \cong \ufp \ofp (\Fq \ofp \Fq)$. This identification is an isomorphism of $p$-Lie algebras, where the Lie bracket and $p$-operation on $\ufp \ofp (\Fq \ofp \Fq)$ are defined by $[x \otimes a,y \otimes b] = [x,y] \otimes (ab)$ and $(x \otimes a)^{[p]} = x^{[p]} \otimes a^p$. Here $x,y \in \ufp$ and $a,b \in \Fq \ofp \Fq$. Now $\Fq \ofp \Fq \cong (\Fq)^{\times r}$ as an $\Fq$-algebra. To see this, write $\Fq = \Fp[\alpha]$ for some $\alpha \in \Fq$, and let $f \in \Fp[t]$ be the minimal polynomial of $\alpha$ over $\Fp$, so $\deg(f) = r$. Then $\Fq \cong \Fp[t]/(f)$, and $\Fq \ofp \Fq \cong \Fq[t]/(f)$. But $f$ splits over $\Fq$, so $\Fq[x]/(f) \cong (\Fq)^{\times r}$. Now
\[
\ufp \ofp (\Fq \ofp \Fq) \cong \ufp \ofp (\Fq)^{\times r} \cong (\ufq)^{\oplus r}
\]
as $p$-restricted Lie algebras over $\Fq$. Extending scalars to $k$, one gets $\Rufq \ofp k \cong \fru^{\oplus r}$.
\end{proof}

There exists a natural embedding $\iota: \ufp \hookrightarrow \Rufq$ of $\Fp$-Lie algebras corresponding to the fact that $\ufp$ is naturally an $\Fp$-vector subspace of $\ufq$. Explicitly, $\ufp$ identifies with the subspace $\ufp \ofp 1$ of $\ufp \ofp \Fq \cong \Rufq$. Extending scalars to $k$, one obtains an embedding
\[
\iota \ofp k: \fru \cong \ufp \ofp k \longrightarrow \Rufq \ofp k \cong \fru^{\oplus r},
\]
which is just the diagonal embedding of $\fru$ into $\fru^{\oplus r}$. To see this, observe that an element $x \in \ufp \subset \fru$ maps under $\iota \ofp k$ to $x \ofp (1 \ofp 1) \in \ufp \ofp (\Fq \ofp k) \cong \Rufq \ofp k$, and $1 \ofp 1 \in \Fq \ofp k$ is the sum of the $r$ primitive orthogonal idempotents that yield the decomposition $\Fq \ofp k \cong k^{\times r}$. There also exists a surjection $\Rufq \ofp k \cong \ufp \ofp (\Fq \ofp k) \twoheadrightarrow \ufp \ofp k \cong \fru$ induced by the natural multiplication map $\Fq \ofp k \rightarrow k$. This surjection then identifies with the $r$-fold addition map $\fru^{\oplus r} \twoheadrightarrow \fru$. From now on, it will be convenient to denote the Weil restriction $\Rufq$ simply by $\ufq$. Then $u(\ufq \ofp k) \cong u(\fru^{\oplus r})$.

The groups $\Tfp$ and $\Tfq$ act on $\ufp$ and $\ufq$, respectively, by the adjoint action, and the embedding $\ufp \hookrightarrow \ufq$ is a homomorphism of $\Tfp$-modules. Upon scalar extension to $k$, the $\Tfp$-module homomorphism $\ufp \hookrightarrow \ufq$ lifts to a $T$-module homomorphism $\ufp \ofp k \hookrightarrow \ufq \ofp k$, which under the identifications $\ufp \ofp k \cong \fru$ and $\ufq \ofp k \cong \fru^{\oplus r}$ is just the usual adjoint action of $T$.

\subsection{The gr operation}

Set $A = k \Ufq$, the group algebra over $k$ of $\Ufq$, and let $I \subset A$ be the augmentation ideal of $A$. Then the powers of $I$ form a multiplicative filtration of $A$. Set $\gr A = \bigoplus_{i \geq 0} (I^i/I^{i+1})$, the associated graded ring. By \cite[Theorem 2.3]{Lin:1999}, $\gr A$ is isomorphic as a Hopf algebra to $u(\ufq \ofp k)$. The isomorphism is a map of $\Tfq$-modules, where $\Tfq$ acts on $\Ufq$ by conjugation, and the action of $\Tfq$ on $u(\ufq \ofp k)$ is the one described in Section \ref{subsection:Weilrestriction}.

Let $M$ be a $k \Ufq$-module and define $\gr M = \bigoplus_{i \geq 0}M_{i}$ where $M_{i}= I^i.M/I^{i+1}.M$. Then $\gr M$ is naturally a graded module for the graded algebra $\gr A \cong u(\ufq \ofp k)$. We now follow the discussion in \cite[Section 2]{PS:2009}. Let $N$ and $Q$ be $k \Ufq$-modules, and let  
\[
0\rightarrow k \stackrel{\sigma}{\rightarrow} Q \rightarrow N \rightarrow 0
\]
represent a non-split extension in $\Ext^{1}_{\Ufq}(N,k)$, which means that $\im\sigma\subseteq I.Q$. Since $\gr$ takes surjections to surjections, we have by \cite[Section 2]{PS:2009} an extension 
\[
0\rightarrow k \stackrel{\sigma'}\rightarrow \gr Q \rightarrow \gr N \rightarrow 0. 
\]
The resulting extension is non-split because $\im \sigma' \subseteq \bigoplus_{i>0} Q_{i}$. Therefore, we get an injective map 
\[
\Ext^{1}_{\Ufq}(N,k)\stackrel{\gr}{\hookrightarrow} \Ext^{1}_{\gr A}(\gr N,k).
\]
If $N$ is also a $\Bfq$-module then $\gr N$ is a $\gr A \rtimes \Tfq$-module and this map also induces an injection on the space of $\Tfq$ fixed points: 
\begin{equation}
\Ext^{1}_{\Ufq}(N,k)^{\Tfq}\stackrel{\gr}{\hookrightarrow} \Ext^{1}_{\gr A}(\gr N,k)^{\Tfq}.
\end{equation}

\subsection{}

Let $A$ be as in the previous section. If $M$ is a finite dimensional $B$-module, then there exists a (weight) filtration, $M=F^{0}M \supseteq F^{1}M \supseteq \cdots$, defined in \cite[Section 2.4]{Lin:1999}, such that $I^{n}F^{i}M\subseteq F^{i+n}M$. The associated graded module $\grbar M$ is a $\gr A$-module. Note that $\grbar M$ might not coincide with $\gr M$. However, in the cases we consider the two filtrations will give rise to the same module. 

We have an isomorphism of algebras $\gr A \cong u(\ufq \otimes_{\Fp} k) \cong u(\fraku^{\oplus r})$. If $M$ is a rational $B$-module, then the linear isomorphism $M\rightarrow \grbar M$ is an isomorphism of $u(\fraku^{\oplus r})$-modules, where the action on $\grbar M$ is given by $\gr A$, and the action of $u(\fraku^{\oplus r})$ on $M$ is the $k$-linear extension of the restriction of the rational action of $U$ to $\ufq$ regarded as a Lie algebra over $\Fp$ (cf. \cite[Proposition 2.4]{Lin:1999} and \cite[Theorem 4.3]{Friedlander:2010}). Put another way, $u(\fru^{\oplus r})$ acts on $M$ via the surjection $u(\fru^{\oplus r}) \twoheadrightarrow u(\fru)$ discussed in Section \ref{subsection:Weilrestriction} composed with the natural action of $u(\fru)$ on $M$. In particular, the normal subalgebra of $u(\fru^{\oplus r})$ that is isomorphic to $u(\fru)$ and that corresponds to the first component of the direct sum $\fru^{\oplus r}$ acts on $M$ via the natural action of $u(\fru)$ on $M$.

We can now give via Lie algebra cohomology an upper bound for the dimension of $\opH^1(\Gfq,M)$ when $M$ is in $\text{mod}(G)$. 

\begin{theorem} \label{theorem:upperbound}
Let $M$ be a finite dimensional rational $G$-module. Then 
\[
\dim \opH^1(\Gfq,M) \leq \dim \opH^1(u(\fraku^{\oplus r}),M)^{\Tfq}.
\]
\end{theorem}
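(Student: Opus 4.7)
The plan is to chain together three facts already established in the preceding subsections, assembling them into one inequality. At each step we have either an injection on the level of cohomology or an isomorphism of algebras/modules that preserves the relevant $\Tfq$-structure.

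First, I would start by applying the Sylow $p$-subgroup reduction. Since $\Ufq$ is a Sylow $p$-subgroup of $\Gfq$, the injection (\ref{eq:sylowinjection}) yields
\[
\dim \opH^1(\Gfq,M) \leq \dim \opH^1(\Ufq,M)^{\Tfq}.
\]
Here the restriction map is $\Tfq$-equivariant because the conjugation action of $\Tfq$ extends the inner automorphism action, which is trivial on cohomology of $\Gfq$. So the target must already consist of $\Tfq$-fixed vectors.

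Second, I would translate $\opH^1(\Ufq,M)$ into $\Ext^1_{k\Ufq}(k,M)$, which is the setting in which the $\gr$ operation was defined. Since $M$ is a rational $G$-module, it is in particular a $\Bfq$-module, so $\gr$ provides a $\Tfq$-equivariant injection
\[
\Ext^{1}_{k\Ufq}(k,M)^{\Tfq} \stackrel{\gr}{\hookrightarrow} \Ext^{1}_{\gr A}(k,\gr M)^{\Tfq}.
\]
Strictly speaking, this is the filtration $\gr M$ coming from the augmentation ideal; I would replace it by the weight filtration $\grbar M$, using the remark in Section 2.4 that in the rational case the two filtrations give rise to the same graded module, so we have an injection into $\Ext^{1}_{\gr A}(k,\grbar M)^{\Tfq}$.

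Third, I would invoke the Lin--Nakano isomorphism $\gr A \cong u(\fru^{\oplus r})$ of $\Tfq$-Hopf algebras together with the fact, recalled just before the theorem, that the linear isomorphism $M \to \grbar M$ is in fact an isomorphism of $u(\fru^{\oplus r})$-modules (under the surjection $u(\fru^{\oplus r}) \twoheadrightarrow u(\fru)$ induced by addition, composed with the natural action of $u(\fru)$ on $M$). Under these identifications, the target becomes $\opH^1(u(\fru^{\oplus r}),M)^{\Tfq}$, giving the desired bound. There is no real obstacle here, since each step is explicitly available from the preceding sections; the only thing to verify carefully is the compatibility of the $\Tfq$-action across the three identifications, which follows from the fact that each map (Sylow restriction, $\gr$, and the Lin--Nakano isomorphism) is by construction a morphism of $\Tfq$-modules.
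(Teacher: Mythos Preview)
Your second step has a real gap. The $\gr$ injection established in Section~2.3 is for $\Ext^{1}_{\Ufq}(N,k)$ with $k$ in the \emph{second} argument: the argument there relies on the kernel of the extension being the one-dimensional trivial module, so that after applying $\gr$ (which is only right exact) the kernel is automatically recovered. Your version $\Ext^{1}_{k\Ufq}(k,M) \hookrightarrow \Ext^{1}_{\gr A}(k,\gr M)$ puts $M$ in the kernel slot; for an extension $0\to M\to Q\to k\to 0$ the induced filtration $M\cap I^{i}Q$ on $M$ need not agree with the intrinsic filtration $I^{i}M$, so the kernel of $\gr Q\to k$ is not $\gr M$ in general. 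Even if you first dualize to $\Ext^{1}_{\Ufq}(M^{*},k)$ and apply Section~2.3 correctly, you land in $\Ext^{1}_{\gr A}(\gr(M^{*}),k)$, and identifying this with $\opH^{1}(u(\fru^{\oplus r}),M)$ requires $\gr(M^{*})=\grbar(M^{*})$. The paper does \emph{not} claim this for arbitrary rational modules: the phrase ``in the cases we consider'' in Section~2.4 is made precise only in the proof of Theorem~\ref{theorem:relatecoho}, where it means modules generated over $u(\fru)$ by a single highest weight vector. A general finite-dimensional rational $G$-module does not satisfy this.

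The paper avoids both problems by invoking the May spectral sequence of Lin--Nakano, whose $E_{1}$-page is built from $\grbar M$ directly rather than from $\gr M$. Since the linear isomorphism $M\to\grbar M$ is a $u(\fru^{\oplus r})$-isomorphism for \emph{every} rational $B$-module (this is exactly the content recalled in the paragraph preceding the theorem), the $E_{1}$-page is $\opH^{i+j}(u(\fru^{\oplus r}),M)_{(i)}$ with no further hypothesis on $M$. Taking $\Tfq$-invariants (exact, as $\Tfq$ has order prime to $p$) then exhibits $\opH^{1}(\Ufq,M)^{\Tfq}$ as a subquotient of $\opH^{1}(u(\fru^{\oplus r}),M)^{\Tfq}$. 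The May spectral sequence is the missing ingredient that lets you bypass the $\gr$ versus $\grbar$ comparison for general $M$.
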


\begin{proof}
First observe that $\dim \opH^{1}(\Gfq,M)\leq \dim \opH^1(\Ufq,M)^{\Tfq}$ by \eqref{eq:sylowinjection}, so we are left to show that $\dim \opH^1(\Ufq,M)^{\Tfq}\leq \dim \opH^1(u(\fraku^{\oplus r}),M)^{\Tfq}$. Also, recall that $\gr A$ is isomorphic as a Hopf algebra to $u(\ufq \ofp k)$. The isomorphism is a map of $\Tfq$-modules, where $\Tfq$ acts on $\Ufq$ by conjugation, and the action of $\Tfq$ on $u(\ufq \ofp k)$ is described in Section \ref{subsection:Weilrestriction}. 

From \cite[Theorem 3.2]{Lin:1999} we get the May spectral sequence
\begin{equation} \label{eq:Mayspecseq}
E_1^{i,j} = \opH^{i+j}(u(\fraku^{\oplus r}), M)_{(i)} \Rightarrow \opH^{i+j}(\Ufq, M),
\end{equation}
where we are identifying $\gr A$ with $u(\fraku^{\oplus r})$ and $M$ with $\grbar M$. The differentials of \eqref{eq:Mayspecseq} are $\Tfq$-module homomorphisms. The finite group $\Tfq \cong (\F_q^{\times})^n$ is semisimple over $k$, so the fixed point functor $(-)^{\Tfq}$ is exact. Then applying $(-)^{\Tfq}$ to \eqref{eq:Mayspecseq}, we obtain the new spectral sequence
\begin{equation} \label{eq:newMayspecseq}
E_1^{i,j} = \left( \opH^{i+j}(u(\fraku^{\oplus r}), M)_{(i)} \right)^{\Tfq} \Rightarrow \opH^{i+j}(\Ufq,L(\lambda))^{\Tfq}.
\end{equation}
Then $\opH^1(\Ufq,M)^{\Tfq}$ is a subquotient of $\opH^1(u(\fraku^{\oplus r}),M)^{\Tfq}$. In particular,
\[
\dim \opH^1(\Ufq,M)^{\Tfq}\leq \dim \opH^1(u(\fraku^{\oplus r}),M)^{\Tfq}. \qedhere
\]
\end{proof} 

\subsection{}
Let $N$ be a $G$-module, and let $\Gamma$ denote the following composition of maps: 
\begin{equation} \label{eq:GammaDef}
\Ext^{1}_{G(\Fq)}(N,k)\hookrightarrow \Ext^{1}_{\Ufq}(N,k)^{\Tfq}\hookrightarrow \Ext^{1}_{\gr A}(\gr N,k)^{\Tfq}\rightarrow \Ext^{1}_{U_{1}}(\gr N,k)^{\Tfq}
\end{equation} 
For the last map we are identifying $\gr A$ with $u(\fraku^{\oplus r})$, and considering the map in cohomology induced by the inclusion of $u(\fraku)$ into the first component of $u(\fraku^{\oplus r})$ (i.e., induced by the inclusion of $\fraku$ into the first component of $\fraku^{\oplus r}$). We have also identified the cohomology groups for $u(\fraku)$ with those for $U_1$ \cite[I.8.6, I.9.6]{Jantzen:2003}. We will next prove that if $N = L(\lambda)^*$ with $\lambda \in X_1(T)$, then the composition of maps \eqref{eq:GammaDef} fits into a commutative square of first cohomology groups.

\begin{theorem} \label{theorem:relatecoho}
Let $\lambda\in X_{1}(T)$. Then there exists a commutative diagram 
\begin{equation} \label{equation:keycommutativediagram}
\CD 
\opH^{1}(G,L(\lambda)) @> \res >> \opH^{1}(U,L(\lambda))^{T}\\
@V \res VV @V \res VV\\
\opH^{1}(\Gfq,L(\lambda)) @>>\Gamma>  \opH^{1}(U_{1},L(\lambda))^{\Tfq}
\endCD
\end{equation}
where $\Gamma$ is obtained by setting $N=L(\lambda)^{*}$ in \eqref{eq:GammaDef}. Furthermore, if 
$\lambda\leq \omega_{j}$ and $q>3$, then $\Gamma$ is injective.
\end{theorem}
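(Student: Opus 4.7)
The plan is to verify commutativity of the square first, then the injectivity of $\Gamma$.

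For commutativity, I would represent a class in $\opH^{1}(G, L(\lambda))$ by a short exact sequence $0 \to L(\lambda) \to M \to k \to 0$ of rational $G$-modules and chase it around the square. The top-right route restricts the sequence to a $U$-extension (automatically $T$-fixed, since $T$ acts on $\opH^{1}(G,-)$ through inner automorphisms) and then to a $U_1$-extension in $\Tfq$-invariants. The bottom-left route restricts to $\Gfq$, then applies $\Gamma$: the Sylow injection \eqref{eq:sylowinjection} views the extension over $\Ufq$; the $\gr$ operation passes to the associated graded extension of $\gr A \cong u(\fraku^{\oplus r})$-modules; and restriction along the first-summand inclusion $u(\fraku) \hookrightarrow u(\fraku^{\oplus r})$ returns a $U_1$-extension. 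The two outputs agree because, as discussed in the preceding subsection, the linear isomorphism $M \to \grbar M$ is an isomorphism of $u(\fraku^{\oplus r})$-modules whose restriction to the first summand coincides with the natural $u(\fraku) \cong U_1$-action on $M$. Naturality of each constituent map forces the diagram to commute.

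For injectivity of $\Gamma$ under the extra hypotheses $\lambda \leq \omega_j$ and $q > 3$, the first two maps in the composition defining $\Gamma$ (the Sylow injection and the $\gr$ map) are always injective, so the task reduces to showing that
\[
\iota_1^{*} \colon \opH^{1}(u(\fraku^{\oplus r}), L(\lambda))^{\Tfq} \longrightarrow \opH^{1}(u(\fraku), L(\lambda))^{\Tfq}
\]
is injective, where $\iota_1$ is the inclusion of $\fraku$ as the first summand. Using the splitting $\fraku^{\oplus r} = \fraku \oplus \fraku^{\oplus(r-1)}$ and the associated Lyndon--Hochschild--Serre spectral sequence (whose $\Tfq$-invariants remain exact since $\Tfq$ is linearly reductive), the kernel of $\iota_1^{*}$ is identified with $(E_2^{1,0})^{\Tfq} = \opH^{1}(u(\fraku^{\oplus(r-1)}), L(\lambda)^{\fraku})^{\Tfq}$. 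Since $L(\lambda)^{\fraku}$ is the one-dimensional $\fraku$-fixed line $k_\mu$ (with $\mu = w_0 \lambda$), and since the remaining summands also act by weight-lowering operators and therefore kill $k_\mu$, this identifies with $k_\mu \otimes \opH^{1}(u(\fraku^{\oplus(r-1)}), k)^{\Tfq}$. Künneth then decomposes $\opH^{1}(u(\fraku^{\oplus(r-1)}), k)$ as $\bigoplus_{i=1}^{r-1} \opH^{1}(u(\fraku), k)^{(i)}$, where the superscript records the Frobenius twist arising from the $(i+1)$-st embedding $\Fq \hookrightarrow k$ in the identification $\Rufq \ofp k \cong \fraku^{\oplus r}$.

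The main obstacle is the resulting $\Tfq$-weight analysis: one must check that no weight appearing in the kernel, namely $\mu + p^{i}\nu$ for $\nu$ a $T$-weight of $\opH^{1}(u(\fraku), k)$ (supported on a known set related to the simple roots) and $1 \leq i \leq r-1$, is congruent to $0$ modulo $(q-1)X(T)$. The hypothesis $\lambda \leq \omega_j$ bounds $\mu$ to the weight window determined by $\omega_j$, and the hypothesis $q > 3$ provides enough room in $X(T)/(q-1)X(T)$ to preclude any such congruence. This is where both hypotheses enter essentially; the argument would proceed by a case-by-case bound on the sizes of the weights of $L(\omega_j)$ against the Frobenius-twisted simple-root weights appearing in $\opH^{1}(u(\fraku), k)$, drawing on Andersen's description of $\opH^{1}(G/B, \calL(\mu))$ recalled in the introduction.
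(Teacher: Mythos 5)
Your proposal follows essentially the same route as the paper's proof: the commutativity square is verified by comparing the $\gr$ functor applied to an explicit extension with the module isomorphism $M \to \grbar M$ (the paper frames this with $N = L(\lambda)^*$ and extensions $0 \to k \to Q \to L(\lambda) \to 0$, but your dual formulation is equivalent), and injectivity is reduced to vanishing of the term $(E_2^{1,0})^{\Tfq}$ in the LHS spectral sequence for the first-summand subalgebra $u(\fraku)\hookrightarrow u(\fraku^{\oplus r})$. Your observation that the Künneth decomposition of $\opH^1(u(\fraku^{\oplus(r-1)}),k)$ carries Frobenius twists on the $\Tfq$-action (the $i$-th summand having weights $p^i\beta$, $\beta\in\Delta$) is in fact more careful than the paper's bookkeeping, which writes only $\beta + w_0\lambda$; since $2p^i < q-1$ for $1\le i\le r-1$ and $p\ge 3$, the conclusion is unaffected, but your version closes a small gap. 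One minor correction: the fact that the $T$-weights of $\opH^1(u(\fraku),k)$ lie in $\Delta$ is Lemma~\ref{lemma:H1weights} (proved from the cobar complex), not Andersen's sheaf-cohomology results, which enter elsewhere in the paper.
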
 

\begin{proof} First observe that if $M$ is a rational $B$-module such that $M$ is generated as a $u(\fraku)$-module by a highest weight vector, then $\gr M=\grbar M$ as a $u(\fraku^{\oplus r})$-module. This can be seen by analyzing the action of root subgroups (as in the proof of \cite[Proposition 2.4]{Lin:1999}) to show that the (weight) filtration coincides with the radical filtration on $M$. In particular, this applies when $M=L(\lambda)$ with $\lambda\in X_{1}(T)$. 

The commutativity of the diagram reduces to proving that if 
\[
0\rightarrow k \rightarrow Q \stackrel{\phi}{\rightarrow} L(\lambda) \rightarrow 0
\]
is a nonsplit extension of rational $B$-modules, then the extension of $\gr A$-modules 
\[
0\rightarrow k \rightarrow \gr Q \rightarrow  \gr L(\lambda) \rightarrow 0
\]
is equivalent to the extension obtained via restriction (to $\ufq$) of $u(\fraku^{\oplus r})$-modules. 

Let $\phi:Q\rightarrow L(\lambda)$ be the map of $B$-modules given above. Let $\phi_{1}:Q\rightarrow L(\lambda)$ denote the restriction of $\phi$ by considering $Q$ and $L(\lambda)$ as $u(\fraku^{\oplus r})$-modules, and let $\phi_{2}:\gr Q \rightarrow \gr L(\lambda)$ be the induced map of $\gr A$-modules, which is also a surjection. By the preceding paragraph we have $\gr Q=\grbar Q$ (because $Q$ is generated by 
a highest weight vector, namely, the inverse image under $\phi$ of a highest weight vector in $L(\lambda)$, or else the sequence splits) and $\gr L(\lambda)=\grbar L(\lambda)$.  Let $\delta_{1}:Q\rightarrow \gr Q$ (resp.\ $\delta_{2}: L(\lambda) \rightarrow \gr L(\lambda)$) be the isomorphism described in the preceding section. By checking on weight spaces (cf. \cite[Proposition 2.4]{Lin:1999}) one can show that we have a commutative diagram of $\gr A$-modules: 
\begin{equation}
\CD 
Q@>\phi_{1}>> L(\lambda) \\
@V\delta_{1}VV @V\delta_{2}VV\\
\gr Q @>>\phi_{2}>  \gr L(\lambda). 
\endCD
\end{equation} 
This proves the equivalence of the extensions. 

Recall that $\Gamma$ is a composition of maps. All the maps are injective except possibly the restriction map: 
\[
\text{res}:\opH^{1}(u(\fraku^{\oplus r}),L(\lambda))^{\Tfq}\rightarrow \opH^{1}(u(\fraku),L(\lambda))^{\Tfq}.
\]
We shall prove that this map is injective under the assumption that $\lambda\leq \omega_{j}$ and $q>3$. The statement clearly holds for $r=1$, in which case res is the identity, so we can assume that $r>1$. 

Consider now the Lyndon--Hochschild--Serre (LHS) spectral sequence for $u(\ufq \ofp k) \cong u(\fru^{\oplus r})$ and its normal subalgebra $u(\fru)$ (i.e., the subalgebra corresponding to the first component of $\fru^{\oplus r}$):
\[
E_2^{i,j} = \opH^i(u(\fru^{\oplus (r-1)}),\opH^j(u(\fru),L(\lambda))) \Rightarrow \opH^{i+j}(u(\fru^{\oplus r}), L(\lambda)).
\]
As before, we can take $\Tfq$-invariants to obtain a new spectral sequence with $E_2$-page $(E_2^{i,j})^{\Tfq}$. Then the $5$-term exact sequence of the new spectral sequence has initial terms
\[
0 \rightarrow (E_2^{1,0})^{\Tfq} \rightarrow \opH^1(u(\fru^{\oplus r}),L(\lambda))^{\Tfq} \rightarrow (E_2^{0,1})^{\Tfq}.
\]
Here $(E_2^{0,1})^{\Tfq}$ identifies with a subspace of $\opH^1(u(\fru),L(\lambda))^{\Tfq}$. 

It remains to show that $(E_2^{1,0})^{\Tfq} = 0$. We have
\[
\opH^0(u(\fru), L(\lambda)) \cong \Hom_{u(\fru)}(k, L(\lambda)) \cong \Hom_{U_1}(k,L(\lambda)) \cong w_0 \lambda
\]
as a $T$-module because $\lambda\in X_{1}(T)$ \cite[II.3.12]{Jantzen:2003}. Then $E_2^{1,0} \cong \opH^1(u(\fru^{\oplus (r-1)}),k) \otimes w_0 \lambda$, where we have pulled out the weight $w_0\lambda$ because it is trivial as a module for $u(\fru^{\oplus (r-1)})$. Since $u(\fru^{\oplus (r-1)}) \cong u(\fru)^{\otimes (r-1)}$, we have $\opH^{\bullet}(u(\fru^{\oplus (r-1)}),k) \cong \opH^\bullet(u(\fru),k)^{\otimes (r-1)} \cong \opH^\bullet(U_1,k)^{\otimes (r-1)}$ by \cite[Theorem X.7.4]{Mac-Lane:1995}. In particular, $\opH^1(u(\fru^{\oplus (r-1)}),k) \cong \bigoplus_{i=1}^{r-1} \opH^1(U_1,k)$. Now $(E_2^{1,0})^{\Tfq} \neq 0$ only if there exists a weight $\beta$ of $T$ in $\opH^1(U_1,k)$ such that $\beta + w_0\lambda \in (q-1)X(T)$. If $\beta$ is a weight of $T$ in $\opH^1(U_1,k)$, then $\beta \in \Delta$ by Lemma \ref{lemma:H1weights} below. But if $\beta \in \Delta$ and $\lambda\leq \omega_{j}$, then $\beta + w_0\lambda \notin (q-1)X(T)$ whenever $q>3$. Hence, $(E_2^{1,0})^{\Tfq} = 0$, and consequently $\Gamma$ is injective.
\end{proof}

\subsection{}

The injectivity of the map $\Gamma$ allows us to state the following vanishing result that will be used throughout the paper. 

\begin{corollary} \label{corollary:vanishingresult}
Let $L(\lambda)$ be a simple $G$-module with $\lambda\leq \omega_{j}$ and suppose $q>3$. Then
\begin{enumalph} 
\item $\dim \opH^1(\Gfq,L(\lambda)) \leq \dim \opH^1(U_1,L(\lambda))^{\Tfq}$, and
\item if $\opH^1(U_1,L(\lambda))^{\Tfq}=0$, then $\opH^1(\Gfq,L(\lambda))=0$.
\end{enumalph} 
\end{corollary}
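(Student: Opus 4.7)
The corollary is an immediate consequence of Theorem \ref{theorem:relatecoho}, so the plan is simply to extract and record the needed implication. Under the hypotheses $\lambda \leq \omega_j$ (with $\lambda$ dominant and hence $1$-restricted, so that Theorem \ref{theorem:relatecoho} applies) and $q > 3$, the theorem provides, via the bottom horizontal arrow of the commutative diagram \eqref{equation:keycommutativediagram}, an injection
\[
\Gamma: \opH^{1}(\Gfq, L(\lambda)) \hookrightarrow \opH^{1}(U_1, L(\lambda))^{\Tfq}.
\]

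Part (a) is then immediate, since an injection between finite-dimensional vector spaces yields the displayed dimension inequality. Part (b) is the specialization of (a) to the case in which the codomain vanishes: if $\opH^{1}(U_1, L(\lambda))^{\Tfq} = 0$, then injectivity of $\Gamma$ forces $\opH^{1}(\Gfq, L(\lambda)) = 0$ as well.

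There is no real obstacle at this step: all of the substantive work has already been carried out in Theorem \ref{theorem:relatecoho}, namely the reduction to the Sylow subgroup $\Ufq$, the identification of $\gr A$ with $u(\fru^{\oplus r})$ using Weil restriction, the equivalence of extensions that produces the commutative square, and the LHS spectral sequence calculation (using Lemma \ref{lemma:H1weights} and the hypothesis $q > 3$) showing that restriction along the inclusion of the first summand $u(\fru) \hookrightarrow u(\fru^{\oplus r})$ is injective on $\Tfq$-invariants. The purpose of the present corollary is simply to repackage that injectivity as a concrete vanishing criterion, so that the determination of $\opH^1(\Gfq, L(\lambda))$ is reduced to the computation of $\opH^1(U_1, L(\lambda))^{\Tfq}$ — a Lie-algebra-theoretic calculation with a $\Tfq$-weight constraint that is amenable to the type-by-type analysis that occupies the remainder of the paper.
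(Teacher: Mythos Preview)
Your proposal is correct and matches the paper's own justification: the corollary is stated immediately after Theorem \ref{theorem:relatecoho} with the one-line remark that it follows from the injectivity of $\Gamma$, and that is precisely what you invoke. One small caveat: the parenthetical ``$\lambda$ dominant and hence $1$-restricted'' is not automatic from $\lambda \leq \omega_j$ alone (e.g., $3\omega_8$ in type $E_8$ when $p=3$), but this is harmless under the standing assumptions and the prime restrictions in the paper's applications.
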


\begin{remark} \label{remark:lambda0}
When $\lambda=0$, we have $\opH^{1}(G,k)= \opH^1(B,k) = \opH^{1}(\Gfq,k)=\opH^{1}(U_{1},k)^{\Tfq}=0$. Indeed $\opH^{1}(G,k) \cong \opH^1(B,k) = 0$ by \cite[II.4.11]{Jantzen:2003}. By Lemma \ref{lemma:H1weights} below, the weights of $\opH^{1}(U_{1},k)$ are simple roots, and none of these is $\Tfq$-invariant if $q > 3$. (The only $T(\F_3)$-invariant simple roots occur when $\Phi$ is of type $A_1$ or $B_2$.) The vanishing of $\opH^{1}(\Gfq,k)$ now follows from Corollary \ref{corollary:vanishingresult}(b). The vanishing of $\opH^1(\Gfq,k)$ can also be proved directly by an argument using the Frattini subgroup. 
\end{remark}

Because of the above remark, we may henceforth restrict our attention to $\lambda\ne 0$.

\section{Cohomology for the Frobenius kernel \texorpdfstring{$U_1$}{U1}} \label{section:U1cohomology}

In this section we study the cohomology group $\opH^1(U_1,L(\lambda))$ of Theorem \ref{theorem:relatecoho}. Eventually we will specialize to the case where $\lambda$ is less than or equal to a fundamental dominant weight. Throughout this section, we maintain the standing assumption that $p > 2$.

\subsection{Weight spaces in the socle of \texorpdfstring{$U_1$}{U1} cohomology} \label{subsection:Andersenanalysis}

Given $\lambda \in X(T)$, set $\lambda^* = -w_0\lambda$. Observe that the involution $\lambda \mapsto \lambda^*$ restricts to involutions on $\Delta$ and $X_1(T)$. In particular, if $\lambda \in X_1(T)$ is less than or equal to a fundamental dominant weight, then so is $\lambda^*$. Also, $\Ext_{U_1}^1(L(\lambda),k) \cong \Ext_{U_1}^1(k,L(\lambda^*)) = \opH^1(U_1,L(\lambda^*))$, so understanding the $\Tfq$-invariants in $\Ext_{U_1}^1(L(\lambda),k)$ will enable us to apply Theorem \ref{theorem:relatecoho} to study cohomology for $\Gfq$. Our first step is to analyze the socle of $\Ext^1_{U_1}(L(\lambda),k)$ as a $B/U_1 \cong (U/U_1) \rtimes T$-module.

Every simple rational $B/U_1$-module is one-dimensional of $T$-weight $-\mu - p\nu$ for some $\mu \in X_1(T)$ and some $\nu \in X(T)$. The dimension of the $(-\mu - p\nu)$-isotypic component in the socle of a rational $B/U_1$-module $M$ is equal to $\dim \Hom_{B/U_1}(-\mu-p\nu,M)$. Then to compute the socle of $\Ext_{U_1}^1(L(\lambda),k)$ as a $B/U_1$-module, it suffices to consider, for $\mu \in X_1(T)$ and $\nu \in X(T)$, the dimensions of the $\Hom$-spaces
\begin{equation} \label{eq:soclereduction}
\begin{split}
\Hom_{B/U_1} (-\mu-p\nu , \Ext^1_{U_1}(L(\lambda),k)) &\cong \Hom_{B/U_1} (k, \Ext^1_{U_1}(L(\lambda),k)\otimes (\mu+p\nu)) \\
&\cong \Hom_{B/U_1}(k, \Ext_{U_1}^1(L(\lambda),\mu+p\nu)) \\
&\cong \Hom_{B/B_1}(k, \Hom_{T_1}(k,\Ext^1_{U_1}(L(\lambda),\mu+p\nu))) \\
&\cong \Hom_{B/B_1}(k, \Ext_{B_1}^1(L(\lambda),\mu+p\nu)).
\end{split}
\end{equation}
Here we have used the fact that $T_1\cong B_1/U_1$ is a normal subgroup scheme in $B/U_1$ with quotient $B/B_1$. The last isomorphism follows by applying the LHS spectral sequence for the group extension $1 \rightarrow U_1 \rightarrow B_1 \rightarrow T_1 \rightarrow 1$ and using the fact that modules over $T_1\cong B_1/U_1$ are completely reducible.

\begin{lemma} \label{lemma:nonzeroweightspace}
Suppose $p > 2$. Let $\lambda,\mu \in X_1(T)$, and let $\nu \in X(T)$. Then
\[
\Hom_{B/U_1} (-\mu-p\nu , \Ext^1_{U_1}(L(\lambda),k)) \cong \begin{cases} \Ext^1_B(L(\lambda), \mu + p\nu) & \text{if $\lambda \neq \mu$} \\ 0 & \text{if $\lambda = \mu$.} \end{cases}
\]
\end{lemma}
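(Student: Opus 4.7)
The plan is to apply the Lyndon--Hochschild--Serre spectral sequence for the exact sequence $1\to B_1 \to B \to B/B_1 \to 1$ with coefficients in $\mu+p\nu$, building on the chain of identifications in \eqref{eq:soclereduction}. The essential preliminary will be to identify $\Hom_{B_1}(L(\lambda), \mu+p\nu)$ as a $B/B_1 \cong B^{(1)}$-module.

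First I would observe that since $\lambda \in X_1(T)$, the module $L(\lambda)$ is generated over $u(\fru) \cong \Dist(U_1)$ by the highest weight vector $v_\lambda$, so $L(\lambda)/\fru L(\lambda) \cong k_\lambda$ and the $B_1$-head of $L(\lambda)$ is the one-dimensional simple with $T_1$-character $\lambda|_{T_1}$. Since $p\nu$ is trivial on $T_1$, the module $\mu+p\nu$ restricts on $B_1$ to $\mu|_{T_1}$, which agrees with $\lambda|_{T_1}$ exactly when $\mu=\lambda$ (both being $p$-restricted). Tracking the residual $B/B_1$-action then shows
$$\Hom_{B_1}(L(\lambda), \mu+p\nu) \cong \begin{cases} 0 & \text{if } \lambda \neq \mu,\\ \nu^{(1)} & \text{if } \lambda = \mu.\end{cases}$$

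The $5$-term exact sequence of the spectral sequence reads
$$0 \to \opH^1(B/B_1, \Hom_{B_1}(L(\lambda), \mu{+}p\nu)) \to \Ext^1_B(L(\lambda), \mu{+}p\nu) \to \Hom_{B/B_1}(k, \Ext^1_{B_1}(L(\lambda), \mu{+}p\nu)) \to \opH^2(B/B_1, \Hom_{B_1}(L(\lambda), \mu{+}p\nu)).$$
When $\lambda \neq \mu$, both outer terms vanish and the middle map becomes the desired isomorphism $\Ext^1_B(L(\lambda), \mu+p\nu) \cong \Hom_{B/B_1}(k, \Ext^1_{B_1}(L(\lambda), \mu+p\nu))$. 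When $\lambda = \mu$, the $B_1$-triviality of $\nu^{(1)}$ gives $\Ext^1_{B_1}(L(\lambda), \lambda+p\nu) \cong \Ext^1_{B_1}(L(\lambda), \lambda) \otimes \nu^{(1)}$ as $B/B_1$-modules, so
$$\Hom_{B/B_1}(k, \Ext^1_{B_1}(L(\lambda), \lambda+p\nu)) \cong \Hom_{B/B_1}(-p\nu, \Ext^1_{B_1}(L(\lambda), \lambda)).$$
Since the simple $B/B_1 \cong B^{(1)}$-modules are precisely the characters $-p\nu$ for $\nu\in X(T)$, vanishing for every $\nu$ is equivalent to the single assertion
$$\Ext^1_{B_1}(L(\lambda), \lambda) = 0.$$

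This last vanishing is where I expect the main effort to go. By $T_1$-acyclicity of the LHS for $1 \to U_1 \to B_1 \to T_1 \to 1$, it is equivalent to showing that no $T$-weight of $\Ext^1_{U_1}(L(\lambda), k) \cong \opH^1(U_1, L(\lambda^*))$ is congruent to $-\lambda$ modulo $pX(T)$. I would verify this by analyzing the weights of $\opH^1(u(\fru), L(\lambda^*))$ via the standard $u(\fru)$-projective resolution of $k$: under the restriction $p>2$ the restricted conditions $\langle\lambda,\alpha^\vee\rangle \leq p-1$ together with the bounded height of the relevant cocycle weights (which take the form $\beta + \tau$ with $\beta$ a weight of $\opH^1(u(\fru),k)$ and $\tau$ a weight of $L(\lambda^*)$) should preclude the required $p$-divisibility.
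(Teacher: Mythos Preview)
Your overall framework coincides with the paper's: both use \eqref{eq:soclereduction} to reduce to $\Hom_{B/B_1}(k,\Ext^1_{B_1}(L(\lambda),\mu+p\nu))$, and both invoke the five-term exact sequence of the LHS spectral sequence for $B_1 \trianglelefteq B$. Your treatment of the case $\lambda \neq \mu$ is essentially identical to the paper's: $\Hom_{B_1}(L(\lambda),\mu+p\nu)=0$ forces $E_2^{1,0}=E_2^{2,0}=0$ and gives the isomorphism.

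The gap is in the case $\lambda=\mu$. You correctly reduce to the single assertion $\Ext^1_{B_1}(L(\lambda),\lambda)=0$, but your proposed proof of this by weight bounds on $\opH^1(U_1,L(\lambda^*))$ does not work. The weights of $\opH^1(U_1,k)\otimes L(\lambda^*)$ can very well be congruent to $-\lambda$ modulo $pX(T)$: already for $\Phi=A_1$ with $\lambda=(p-1)\omega$, taking $\beta=\alpha$ and $\tau=(p-1)\omega$ gives $\beta+\tau=(p+1)\omega\equiv -(p-1)\omega=-\lambda\pmod{pX(T)}$. Thus no argument based only on the shape ``$\beta+\tau$ with $\beta\in\Delta$'' and the inequality $\langle\lambda,\alpha^\vee\rangle\le p-1$ can rule out the congruence. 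What actually happens in that example is that $L((p-1)\omega)$ is free over $U_1$, so $\opH^1(U_1,L(\lambda^*))=0$; the point is that the vanishing is a genuine cohomological fact, not a weight bound. The paper handles this case by directly citing Andersen's theorem \cite[Theorem~3.4]{Andersen:1984a}, which gives $\Ext^1_{B_1}(L(\lambda),\lambda)=0$ for $p>2$ and $\lambda\in X_1(T)$. That is the missing ingredient you need; your sketch would amount to reproving Andersen's result, and the argument you outline is not adequate for that.
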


\begin{proof}
First suppose that $\lambda = \mu$. Since $B_1$ acts trivially on $p\nu$, one has
\[
\Ext^1_{B_1} (L(\lambda),\lambda+p\nu) \cong \Ext^1_{B_1} (L(\lambda),\lambda) \otimes p\nu = 0
\]
by \cite[Theorem 3.4]{Andersen:1984a}. Then $\Hom_{B/U_1} (-\mu-p\nu , \Ext^1_{U_1}(L(\lambda),k)) = 0$ by \eqref{eq:soclereduction}. So assume that $\lambda \neq \mu$, and consider the LHS spectral sequence
\[
E_2^{i,j} = \Ext^i_{B/B_1} (k , \Ext^j_{B_1} (L(\lambda), \mu + p\nu)) \Rightarrow \Ext^{i+j}_B (L(\lambda), \mu + p\nu).
\]
It gives rise to the $5$-term exact sequence
\begin{equation} \label{eq:B5term}
0 \rightarrow E_2^{1,0} \rightarrow \Ext_B^1(L(\lambda),\mu+p\nu) \rightarrow E_2^{0,1} \rightarrow E_2^{2,0}\rightarrow E_2.
\end{equation}
One has $\Hom_{B_1}(L(\lambda),\mu+p\nu) \cong \Hom_{B_1}(L(\lambda), \mu) \otimes p\nu = 0$ because $\lambda\neq \mu$. (Since $\lambda,\mu \in X_1(T)$, any homomorphism must map the highest weight space $L(\lambda)_\lambda$ to zero because $\lambda$ and $\mu$ are not congruent modulo $pX(T)$, and $L(\lambda)$ is generated as a $U_1$-module by its highest weight space \cite[II.3.14]{Jantzen:2003}.) Then $E_2^{1,0}=E_2^{2,0}=0$, and $\Hom_{B/U_1} (-\mu-p\nu , \Ext^1_{U_1}(L(\lambda),k)) \cong E_2^{0,1} \cong \Ext^1_B(L(\lambda), \mu + p\nu)$.
\end{proof}

\begin{remark}
One may have $\Ext^1_B(L(\lambda), \mu + p\nu) \neq 0$ but $\Hom_{B/U_1} (-\mu-p\nu , \Ext^1_{U_1}(L(\lambda),k)) = 0$. Indeed, let $\lambda = \mu \in X_1(T)$, let $\alpha \in \Delta$, and take $\nu = -\alpha$. Then $\Ext_{B_1}^1(L(\lambda),\mu+p\nu) = 0$ as in the proof of the lemma, so in \eqref{eq:B5term} one has $E_2^{0,1} = 0$. Then $E_2^{1,0} \cong \Ext_B^1(L(\lambda),\mu+p\nu)$. Now
\begin{align*}
E_2^{1,0} &\cong \Ext_{B/B_1}^1(k,\Hom_{B_1}(L(\lambda),\lambda) \otimes p\nu) \\
&\cong \Ext_{B/B_1}^1(k,p\nu) & \text{because } \Hom_{B_1}(L(\lambda),\lambda) \cong k, \\
&\cong \Ext_B^1(k,-\alpha) \cong k & \text{by \cite[Corollary 2.4]{Andersen:1984a}.}
\end{align*}
\end{remark}

\begin{corollary} \label{corollary:BtoU1injection}
Suppose $p > 2$, and let $\lambda \in X_1(T)$. Then the restriction map $\Ext^{1}_{B}(L(\lambda),k) \to \Ext^{1}_{U_{1}}(L(\lambda),k)^{\Tfq}$ is an injection.
\end{corollary}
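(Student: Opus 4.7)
The plan is to deduce the corollary directly from Lemma \ref{lemma:nonzeroweightspace} applied at $\mu = \nu = 0$, handling the trivial weight $\lambda = 0$ separately. The case $\lambda = 0$ is immediate: $\Ext^1_B(k,k) \cong \opH^1(B,k) = 0$ by \cite[II.4.11]{Jantzen:2003}, so the restriction map has zero source and is vacuously injective. Henceforth assume $\lambda \neq 0$.

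The key observation is that, because $U_1 \triangleleft B$, the restriction map $\Ext^1_B(L(\lambda),k) \to \Ext^1_{U_1}(L(\lambda),k)$ takes values in the $B/U_1$-invariant subspace: inner automorphisms of $B$ act trivially on $\Ext^1_B$, and the restriction map is $B/U_1$-equivariant, so the image lies in $\Ext^1_{U_1}(L(\lambda),k)^{B/U_1}$. Identifying $\Hom_{B/U_1}(k,-) = (-)^{B/U_1}$, Lemma \ref{lemma:nonzeroweightspace} with $\mu=\nu=0$ and $\lambda \neq 0$ then yields
\[
\Ext^1_B(L(\lambda),k) \;\cong\; \Ext^1_{U_1}(L(\lambda),k)^{B/U_1}.
\]

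The remaining step is to observe that this abstract isomorphism coincides with the natural restriction map. Tracing through the proof of the lemma, the isomorphism is built as the composition of the edge map of the LHS spectral sequence for $B_1 \triangleleft B$ (which is $\res_{B \to B_1}$, landing in the $B/B_1$-invariants) with the $T_1$-isotypic decomposition of $\Ext^1_{B_1}(L(\lambda),k)$ coming from the LHS spectral sequence for $U_1 \triangleleft B_1$ (which collapses because $T_1$-modules are semisimple, and whose edge map is $\res_{B_1 \to U_1}$). The composition of these two edge maps is precisely the restriction $\res_{B \to U_1}$ followed by projection onto $B/U_1$-invariants. Thus the restriction map realizes the isomorphism of the lemma, and in particular is injective. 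Composing with the chain of inclusions
\[
\Ext^1_{U_1}(L(\lambda),k)^{B/U_1} \;\subset\; \Ext^1_{U_1}(L(\lambda),k)^{T} \;\subset\; \Ext^1_{U_1}(L(\lambda),k)^{\Tfq}
\]
yields the desired injectivity.

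The main technical point—and the only place one has to be careful—is the naturality check that the isomorphism furnished by Lemma \ref{lemma:nonzeroweightspace} is the same as the concrete restriction map. This is bookkeeping with edge maps of the two LHS spectral sequences $B_1 \triangleleft B$ and $U_1 \triangleleft B_1$ whose composition computes $\res_{B \to U_1}$; no new ideas are needed beyond those already appearing in the proof of the lemma.
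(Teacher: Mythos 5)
Your proof is correct and follows essentially the same route as the paper: dispose of $\lambda=0$ using the vanishing of $\opH^1(B,k)$, then apply Lemma \ref{lemma:nonzeroweightspace} with $\mu=\nu=0$ to identify $\Ext^1_B(L(\lambda),k)$ with the $B/U_1$-invariants of $\Ext^1_{U_1}(L(\lambda),k)$, and pass through the chain of inclusions into the $\Tfq$-invariants. The naturality check you carry out—that the lemma's isomorphism is realized by the concrete restriction map via the edge maps of the two LHS spectral sequences—is a step the paper leaves implicit (it just cites ``by the proof of Lemma \ref{lemma:nonzeroweightspace}''), but it is exactly the right thing to verify and your bookkeeping is sound.
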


\begin{proof}
If $\lambda=0$, then $\Ext_B^1(L(\lambda),k)=0$ by Remark \ref{remark:lambda0}, so assume $\lambda\ne 0$. By the proof of Lemma \ref{lemma:nonzeroweightspace} (with $\mu=\nu=0$), we have $\Ext^{1}_{B}(L(\lambda),k) \cong \Ext^{1}_{U_{1}}(L(\lambda),k)^{B/U_{1}} \subseteq \Ext^{1}_{U_{1}}(L(\lambda),k)^{\Tfq}$.
\end{proof}
 
\subsection{Structure of the socle}

We can now describe the socle of $\Ext^1_{U_1}(L(\lambda),k)$ as a $B/U_1$-module. Given weights $\lambda,\mu \in X(T)$, write $\lambda \uparrow \mu$ for the order relation on $X(T)$ defined in \cite[II.6.4]{Jantzen:2003}.

\begin{theorem} \label{theorem:socthm}
Suppose $p > 2$, and let $\lambda \in X_1(T)$. Then
\[
\soc_{B/U_1}\Ext^1_{U_1}(L(\lambda),k) \cong \bigoplus_{\substack{\alpha\in \Delta \\ (\lambda,\alpha^\vee) \neq p-1}} -s_{\alpha}\cdot \lambda 
\oplus \bigoplus_{\substack{\sigma \uparrow \lambda \\ \sigma\in X(T)_{+}}} (-\sigma)^{\oplus m_\sigma},
\]
where $m_\sigma=\dim \Ext^1_G(L(\lambda),H^0(\sigma))$.
\end{theorem}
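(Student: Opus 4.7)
The plan is to convert the statement into a family of $\Ext^1_B$-group computations via Lemma 3.1.1, and then invoke Andersen's classification \cite{Andersen:1984a} of nonzero $\Ext^1_B$ groups to identify the surviving contributions.

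Every simple $B/U_1$-module is one-dimensional of $T$-weight $-\tau$ for some $\tau\in X(T)$. Decomposing $\tau$ uniquely as $\mu+p\nu$ with $\mu\in X_1(T)$ and $\nu\in X(T)$, Lemma 3.1.1 shows that the multiplicity of the simple $-\tau$ in $\soc_{B/U_1}\Ext^1_{U_1}(L(\lambda),k)$ equals $\dim\Ext^1_B(L(\lambda),\tau)$ when $\mu\ne\lambda$, and equals $0$ when $\mu=\lambda$. Thus the proof reduces to identifying which $\tau$ (with $\mu\ne\lambda$) give a nonzero Ext group and computing its dimension.

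Andersen's work identifies two families of nonzero contributions. The first comes from simple roots: for $\alpha\in\Delta$ with $(\lambda,\alpha^\vee)\ne p-1$, one has $\Ext^1_B(L(\lambda),-s_\alpha\cdot\lambda)\cong k$, while the Ext group vanishes when $(\lambda,\alpha^\vee)=p-1$. The explicit formula $-s_\alpha\cdot\lambda=-\lambda+((\lambda,\alpha^\vee)+1)\alpha$ makes it straightforward to verify that the $X_1(T)$-part of $-s_\alpha\cdot\lambda$ differs from $\lambda$ in this range, so Lemma 3.1.1 applies and each such $\alpha$ contributes one copy of $-s_\alpha\cdot\lambda$ to the socle. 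The second family comes from dominant weights: for $\sigma\in X(T)_+$, the Grothendieck spectral sequence
\[
\Ext^i_G(L(\lambda),R^j\ind_B^G(-\sigma)) \Rightarrow \Ext^{i+j}_B(L(\lambda),-\sigma),
\]
combined with Andersen's analysis of $R^\bullet\ind_B^G(-\sigma)$ (using Serre duality on $G/B$ together with Kempf vanishing), produces an isomorphism $\Ext^1_B(L(\lambda),-\sigma)\cong\Ext^1_G(L(\lambda),H^0(\sigma))$ of dimension $m_\sigma$. The linkage principle forces this to vanish unless $\sigma\uparrow\lambda$, accounting for the restriction in the indexing set.

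For all other $\tau$ whose $X_1(T)$-part differs from $\lambda$, $\Ext^1_B(L(\lambda),\tau)=0$ by Andersen's full classification. Summing the two families then yields the claimed decomposition. The main obstacle is invoking Andersen's classification accurately and verifying the two $\Ext^1_B$ calculations: the dimension-one statement at $-s_\alpha\cdot\lambda$ (which rests on explicit analysis of extensions tied to the Weyl module structure along a simple root), and the identification $\Ext^1_B(L(\lambda),-\sigma)\cong\Ext^1_G(L(\lambda),H^0(\sigma))$, which demands care with Borel conventions (negative versus positive) and a clean spectral-sequence argument to control the higher derived functors $R^j\ind_B^G(-\sigma)$ on antidominant weights.
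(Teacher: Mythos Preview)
Your overall strategy is exactly the paper's: reduce to computing $\Ext^1_B(L(\lambda),\tau)$ via Lemma~3.1.1, then split on whether $\tau$ is dominant and invoke Andersen. But there is a sign slip that breaks both computations as written. You correctly set up that the multiplicity of the simple $-\tau$ equals $\dim\Ext^1_B(L(\lambda),\tau)$; hence for the socle summand $-s_\alpha\cdot\lambda$ the relevant group is $\Ext^1_B(L(\lambda),\, s_\alpha\cdot\lambda)$, not $\Ext^1_B(L(\lambda),\,-s_\alpha\cdot\lambda)$, and for the summand $-\sigma$ it is $\Ext^1_B(L(\lambda),\sigma)$ with $\sigma$ dominant, not $\Ext^1_B(L(\lambda),-\sigma)$.

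This matters most in the dominant case. With the correct sign, since $\sigma\in X(T)_+$, Kempf vanishing gives $R^j\ind_B^G\sigma=0$ for $j>0$, the Grothendieck spectral sequence collapses, and $\Ext^1_B(L(\lambda),\sigma)\cong\Ext^1_G(L(\lambda),H^0(\sigma))$ directly (\cite[II.4.5, I.4.5]{Jantzen:2003}); no Serre duality or analysis of $R^\bullet\ind_B^G$ on antidominant weights is needed or warranted. Your proposed identification $\Ext^1_B(L(\lambda),-\sigma)\cong\Ext^1_G(L(\lambda),H^0(\sigma))$ does not follow from the argument you sketch. In the non-dominant case, Andersen's \cite[Proposition~2.3]{Andersen:1984a} indeed forces $\tau=s_\alpha\cdot\lambda$ with one-dimensional Ext, but you should also note (as the paper does) that the second case of that proposition is ruled out by the hypothesis $\mu\ne\lambda$, and that $(\lambda,\alpha^\vee)\ne p-1$ is precisely the condition ensuring the $X_1(T)$-part of $s_\alpha\cdot\lambda$ differs from $\lambda$.
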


\begin{proof}
Let $\mu \in X_1(T)$ with $\mu \neq \lambda$, and let $\nu \in X(T)$. Then by Lemma \ref{lemma:nonzeroweightspace}, the simple summands in $\soc_{B/U_1}\Ext^1_{U_1}(L(\lambda),k)$ have the form $-\mu-p\nu$, and occur with multiplicity $\dim \Ext^1_B(L(\lambda), \mu + p\nu)$. So suppose $-\mu-p\nu$ occurs as a summand in $\soc_{B/U_1}\Ext^1_{U_1}(L(\lambda),k)$. First suppose $\mu + p\nu \in X(T)_+$, so also $\nu \in X(T)_+$. Then by Kempf's vanishing theorem \cite[II.4.5]{Jantzen:2003} and \cite[I.4.5]{Jantzen:2003},
\[
\Ext^1_B(L(\lambda),\mu+p\nu)\cong \Ext^1_G(L(\lambda), H^0(\mu + p\nu)).
\]
This space is non-zero by assumption, so $\mu+p\nu \uparrow \lambda$ by \cite[II.6.20]{Jantzen:2003}. Conversely, let $\sigma \in X(T)_+$ with $\sigma \uparrow \lambda$. Then $\sigma \leq \lambda$. If $\sigma = \lambda$, then $m_\sigma = 0$ by \cite[II.6.20]{Jantzen:2003}, so assume that $\sigma \neq \lambda$. Write $\sigma = \mu + p \nu$ with $\mu \in X_1(T)$ and $\nu \in X(T)_+$. If $\lambda = \mu$, we would have $\lambda + p\nu \leq \lambda$, and hence $p\nu \leq 0$, a contradiction, because $0 \neq p \nu \in X(T)_+$ and every dominant weight is a positive rational combination of simple roots. Thus, we conclude for all $\sigma \in X(T)_+$ with $\sigma \uparrow \lambda$ that $-\sigma$ occurs as a summand in $\soc_{B/U_1}\Ext^1_{U_1}(L(\lambda),k)$ with multiplicity $m_\sigma$.

Now suppose that $\mu + p\nu \notin X(T)_+$. Then from the first case of \cite[Proposition 2.3]{Andersen:1984a} we conclude that $\mu +p\nu = s_\alpha \cdot \lambda$ for some $\alpha \in \Delta$, and that $\dim \Ext^{1}_{B}(L(\lambda), s_{\alpha} \cdot \lambda) = 1$. Observe that the second case of the cited proposition cannot occur here because, by assumption, $\mu, \lambda \in X_{1}(T)$ and $\mu \neq \lambda$. If $\mu + p\nu = s_\alpha \cdot \lambda$ and $\mu \neq \lambda$, then necessarily $(\lambda,\alpha^\vee) \neq p-1$. Conversely, if $(\lambda,\alpha^\vee) \neq p-1$, then $s_\alpha \cdot \lambda$ has the form $\mu + p\nu$ with $\mu \in X_1(T)$, $\nu \in X(T)$, and $\mu \neq \lambda$. Thus, we conclude for all $\alpha \in \Delta$ with $(\lambda,\alpha^\vee) \neq p-1$ that $-s_\alpha \cdot \lambda$ occurs once as a summand in $\soc_{B/U_1}\Ext^1_{U_1}(L(\lambda),k)$.
\end{proof}

\begin{corollary} \label{corollary:socle}
Suppose $p > 3$ if $\Phi$ is of type $E_7$, $E_8$, or $F_4$, and $p > 2$ otherwise. Let $\lambda \in X(T)_+$ with $\lambda \leq \omega_j$ for some $j$. Then
\[
\soc_{B/U_1}\Ext^1_{U_1}(L(\lambda),k) \cong \bigoplus_{\alpha\in \Delta} -s_{\alpha}\cdot \lambda 
\oplus \bigoplus_{\substack{\sigma \uparrow \lambda \\ \sigma\in X(T)_{+}}} (-\sigma)^{\oplus m_\sigma},
\]
\end{corollary}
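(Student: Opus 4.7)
We plan to deduce the corollary directly from Theorem~\ref{theorem:socthm}. The two statements differ only in whether the first direct sum ranges over all of $\Delta$ or is restricted to those $\alpha$ with $(\lambda,\alpha^\vee) \ne p-1$, so the task reduces to showing that, under the corollary's hypotheses, $(\lambda,\alpha^\vee) \ne p-1$ for every $\alpha \in \Delta$ and every dominant $\lambda \le \omega_j$.

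Since $\lambda$ is dominant, each $(\lambda,\alpha_k^\vee)$ is a nonnegative integer, equal to the $k$th coefficient of $\lambda$ in the basis of fundamental weights. The plan is therefore to bound these coefficients sufficiently to preclude the value $p-1$; as a byproduct the same bounds will imply $\lambda \in X_1(T)$, so that the hypothesis of Theorem~\ref{theorem:socthm} is in force.

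For each $\Phi$ and each $j$, the set of dominant $\lambda$ with $\lambda \le \omega_j$ is finite and can be enumerated from the Cartan matrix or from standard tables. In the classical types $A_n,B_n,C_n,D_n$, a short analysis in the Euclidean realization will show that every such $\lambda$ has all fundamental coefficients in $\{0,1\}$; since $p>2$ this yields $(\lambda,\alpha^\vee) \le 1 < p-1$. In types $G_2$ and $E_6$, the same bound of $1$ holds by direct inspection of the dominant weights lying below each $\omega_j$. In types $F_4$, $E_7$, $E_8$, a few dominant $\lambda \le \omega_j$ have a coefficient equal to $2$ (for instance $2\omega_4 \le \omega_2$ in $F_4$, and analogous configurations $2\omega_1$ and $2\omega_8$ in $E_7$ and $E_8$, as foreshadowed by the exceptional cases of Theorem~\ref{maintheorem3}), but no coefficient exceeds $2$; since $p>3$ gives $p-1 \ge 4$, the inequality $(\lambda,\alpha^\vee) \ne p-1$ still holds.

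The principal obstacle is the case-by-case enumeration of dominant weights below each fundamental weight in the exceptional types, which requires a careful (but finite) check using standard tables. Once those lists are in hand, the verification that no coefficient equals $p-1$ is mechanical, and the corollary follows from Theorem~\ref{theorem:socthm}.
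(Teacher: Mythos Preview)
Your approach is exactly the paper's: reduce to Theorem~\ref{theorem:socthm} and then verify, using the explicit lists in Section~\ref{subsection:Hassediagrams}, that $(\lambda,\alpha^\vee)\neq p-1$ for all $\alpha\in\Delta$. One small slip: in type $E_8$ the weight $3\omega_8$ occurs below $\omega_4$, so the maximal coefficient there is $3$, not $2$; your conclusion is unaffected since $p>3$ still gives $p-1\ge 4>3$.
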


\begin{proof}
One can verify from the lists in Section \ref{subsection:Hassediagrams} that for all $\alpha \in \Delta$, $(\lambda,\alpha^\vee) \neq p-1$.
\end{proof}

It is interesting to note that the contribution to the socle of $\Ext^1_{U_1}(L(\lambda),k)$ comes from two sources. The factors in the first direct summand seem to come from Kostant's classical theorem for the cohomology of complex semisimple Lie algebras, while for $p$ large, the multiplicities $\dim \Ext^1_G(L(\lambda),H^0(\sigma))$ of the other factors arise as coefficients of Kazhdan-Lusztig polynomials. In the special case when the Weyl module $V(\lambda)$ is simple, we obtain the following corollary. 

\begin{corollary} \label{corollary:corsoc}
Suppose $p > 2$ and $\lambda\in X_{1}(T)$.  If $L(\lambda) = V(\lambda),$ then
\[
\soc_{B/U_1} \Ext^1_{U_1}(L(\lambda), k) = 
\bigoplus_{\alpha \in \Delta} -s_\alpha \cdot \lambda.
\]
\end{corollary}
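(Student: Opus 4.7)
The plan is to deduce Corollary \ref{corollary:corsoc} as an immediate consequence of Theorem \ref{theorem:socthm}, which expresses $\soc_{B/U_1}\Ext^1_{U_1}(L(\lambda),k)$ as the sum of a ``Kostant piece'' $\bigoplus_{\alpha\in\Delta,\,(\lambda,\alpha^\vee)\neq p-1} -s_\alpha\cdot\lambda$ and a ``Kazhdan--Lusztig piece'' $\bigoplus_{\sigma\uparrow\lambda,\,\sigma\in X(T)_+}(-\sigma)^{\oplus m_\sigma}$, where $m_\sigma=\dim\Ext^1_G(L(\lambda),H^0(\sigma))$. To obtain the corollary, it suffices to show that the Kazhdan--Lusztig piece vanishes whenever $L(\lambda)=V(\lambda)$.

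The vanishing step is a direct application of the standard $\Ext$-orthogonality between Weyl-filtered and good-filtered rational $G$-modules. Since $L(\lambda)=V(\lambda)$, the module $L(\lambda)$ admits a (trivial one-step) Weyl filtration by itself, and $H^0(\sigma)$ admits a (trivial one-step) good filtration by itself. Therefore \cite[Proposition II.4.13]{Jantzen:2003} gives
\[
\Ext^i_G(V(\lambda),H^0(\sigma)) = 0 \qquad \text{for all } i\ge 1 \text{ and all } \sigma\in X(T)_+.
\]
In particular $m_\sigma=0$ for every dominant $\sigma\uparrow\lambda$, so the second direct summand in Theorem \ref{theorem:socthm} disappears, leaving only $\bigoplus_{\alpha\in\Delta,\,(\lambda,\alpha^\vee)\neq p-1} -s_\alpha\cdot\lambda$. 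This agrees with the unrestricted indexing in the statement of the corollary because, by Lemma \ref{lemma:nonzeroweightspace}, any $\alpha$ with $(\lambda,\alpha^\vee)=p-1$ yields $s_\alpha\cdot\lambda=\lambda-p\alpha$, which has the form $\mu+p\nu$ with $\mu=\lambda\in X_1(T)$ and hence contributes nothing to the $B/U_1$-socle.

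Since the argument collapses to a single invocation of classical $\Ext$-orthogonality, there is essentially no obstacle. The hypothesis $L(\lambda)=V(\lambda)$ is exactly the structural input needed to kill the Kazhdan--Lusztig-type multiplicities $m_\sigma$ in Theorem \ref{theorem:socthm}, and everything else is already built into that theorem.
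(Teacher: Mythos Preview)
Your main argument is exactly the paper's: kill the ``Kazhdan--Lusztig'' summands $m_\sigma$ by invoking $\Ext^1_G(V(\lambda),H^0(\sigma))=0$ from \cite[II.4.13]{Jantzen:2003}. You apply this to Theorem~\ref{theorem:socthm}; the paper applies it to Corollary~\ref{corollary:socle} (whose hypotheses guarantee that the condition $(\lambda,\alpha^\vee)\neq p-1$ is automatically satisfied, so the sum over $\Delta$ is unrestricted).

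The only problematic step is your final paragraph. You correctly observe via Lemma~\ref{lemma:nonzeroweightspace} that when $(\lambda,\alpha^\vee)=p-1$ the simple $B/U_1$-module of weight $-s_\alpha\cdot\lambda$ does \emph{not} occur in the socle. But this does not ``agree with the unrestricted indexing''; it shows precisely the opposite. The module $-s_\alpha\cdot\lambda$ is a nonzero one-dimensional $B/U_1$-module, so including it as a summand when it does not occur in the socle would overcount. In other words, from Theorem~\ref{theorem:socthm} alone you obtain
\[
\soc_{B/U_1}\Ext^1_{U_1}(L(\lambda),k)=\bigoplus_{\substack{\alpha\in\Delta\\(\lambda,\alpha^\vee)\neq p-1}} -s_\alpha\cdot\lambda,
\]
and there is no way to promote this to the unrestricted sum without an additional hypothesis (such as $\lambda\le\omega_j$ and the prime bounds in Corollary~\ref{corollary:socle}) forcing $(\lambda,\alpha^\vee)\neq p-1$ for all $\alpha$. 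The paper sidesteps this by citing Corollary~\ref{corollary:socle} rather than Theorem~\ref{theorem:socthm}; in every application of Corollary~\ref{corollary:corsoc} in the paper those extra hypotheses are in force, so the distinction is harmless there. Your reconciliation attempt, however, is not a valid argument.
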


\begin{proof} Apply the vanishing result $\Ext^1_G(V(\lambda),H^0(\sigma))=0$ of \cite[II.4.13]{Jantzen:2003} to Corollary \ref{corollary:socle}.
\end{proof}

\subsection{Constraints on weights} \label{subsection:injectivehullweights}

Next we examine the structure of $M := \Ext^1_{U_1}(L(\lambda),k)$ as a $B/U_1$-module. We are interested in cases for which the socle of $M$ is equal to the entire module.

\begin{lemma} \label{lemma:H1weights}
Let $V$ be a finite dimensional rational $B$-module. Let $\mu$ be a weight of $T$ in $\Ext_{U_1}^1(V,k) \cong \opH^1(U_1,V^*)$. Then $\mu = \beta - \nu$ for some $\beta \in \Delta$ and some weight $\nu$ of $V$.
\end{lemma}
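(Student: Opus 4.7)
The plan is to argue by induction on $\dim V$, with the base case reducing to a direct calculation of the $T$-weights of $\opH^1(U_1,k)$, and the inductive step exploiting a long exact sequence of $\Ext^{*}_{U_1}(-,k)$.

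First I would compute the weights of $\opH^1(U_1,k) \cong \opH^1(u(\fru),k)$. Representing a class in $\Ext^1_{u(\fru)}(k,k)$ by an extension $0 \to k \to E \to k \to 0$ and reading off what the bracket and $p$-restriction conditions force on the defining cocycle $\phi \colon \fru \to k$, one obtains a $T$-equivariant identification
\[
\opH^1(u(\fru),k) \cong \bigl(\fru/([\fru,\fru] + \langle \fru^{[p]} \rangle)\bigr)^{*}.
\]
Since $\Phi$ is reduced and $p \geq 2$, the element $-p\alpha$ is never a root for any $\alpha \in \Phi^{+}$, so each root vector $x_{-\alpha}$ of $\fru$ satisfies $x_{-\alpha}^{[p]} = 0$. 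Jacobson's formula then places all of $\fru^{[p]}$ inside $[\fru,\fru]$, so $\opH^1(U_1,k) \cong (\fru/[\fru,\fru])^{*}$ has $T$-weights exactly the simple roots $\Delta$.

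For the base case $\dim V = 1$, any one-dimensional rational $B$-module has trivial $U$-action, hence $V \cong k_\nu$ for some $\nu \in X(T)$, and
\[
\Ext^1_{U_1}(V,k) \cong \opH^1(U_1, k_{-\nu}) \cong \opH^1(U_1,k) \otimes k_{-\nu}
\]
as $T$-modules, with $T$-weights of the form $\beta - \nu$ for $\beta \in \Delta$, as desired. For the inductive step with $\dim V \geq 2$, the Lie--Kolchin theorem applied to the connected solvable group $B$ produces a short exact sequence of rational $B$-modules $0 \to V' \to V \to V'' \to 0$ with $V''$ one-dimensional. The resulting $T$-equivariant long exact sequence contains
\[
\Ext^1_{U_1}(V'',k) \longrightarrow \Ext^1_{U_1}(V,k) \longrightarrow \Ext^1_{U_1}(V',k),
\]
so any $T$-weight $\mu$ of the middle term is also a $T$-weight of one of the outer terms. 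Applying the base case to $V''$ and the inductive hypothesis to $V'$, and noting that all $T$-weights of $V'$ and $V''$ are $T$-weights of $V$, yields the claim.

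The only substantive step is the weight calculation of $\opH^1(U_1,k)$; once that is in hand, the induction is purely formal. The reducedness of $\Phi$ (which guarantees the vanishing $x_{-\alpha}^{[p]} = 0$) is the only input needed beyond standard homological facts, and it holds throughout the paper.
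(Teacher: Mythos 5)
Your proof is correct, but it takes a genuinely different route from the paper's. The paper handles the reduction from general coefficients $V^{*}$ to trivial coefficients $k$ by citing an argument from \cite[\S 2.5]{UGA:2009} (essentially, that the weights of $\opH^{1}(U_{1},V^{*})$ are among those of $\opH^{1}(U_{1},k)\otimes V^{*}$), and then reads off the weights of $\opH^{1}(U_{1},k)$ from the low-degree terms of the cobar resolution of $\Dist(U_{1})$, identifying it with $\Hom_{k}(\Dist(U_{1})_{+}/(\Dist(U_{1})_{+})^{2},k)$. You instead make the reduction self-contained by inducting on $\dim V$ through a $B$-composition series (Lie--Kolchin) and using the contravariant long exact sequence for $\Ext_{U_{1}}^{\bullet}(-,k)$; this cleanly replaces the citation with a short, elementary argument, and is perhaps the most attractive feature of your approach. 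Your calculation of $\opH^{1}(U_{1},k)$ via explicit $1$-cocycles $\phi\colon\fru\to k$ subject to $\phi([\fru,\fru])=0$ and $\phi(\fru^{[p]})=0$ is substantively the same computation as the paper's (since $\Dist(U_{1})_{+}/(\Dist(U_{1})_{+})^{2}\cong\fru/([\fru,\fru]+\langle\fru^{[p]}\rangle)$), and your further observation that $p$-nilpotence of the root vectors together with Jacobson's formula puts $\fru^{[p]}$ inside $[\fru,\fru]$ sharpens this to an exact identification $\opH^{1}(U_{1},k)\cong(\fru/[\fru,\fru])^{*}$; note, though, that for the lemma as stated one only needs the weights to lie \emph{in} $\Delta$, so the containment $\opH^{1}(U_{1},k)\hookrightarrow(\fru/[\fru,\fru])^{*}$ already suffices and the Jacobson-formula step, while correct, is dispensable.
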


\begin{proof}
First, if $\mu$ is a weight of $T$ in $\opH^1(U_1,V^*)$, then $\mu$ is also a weight of $T$ in $\opH^1(U_1,k) \otimes V^*$ by the argument in \cite[\S 2.5]{UGA:2009}. Also, the weights of $V^*$ are precisely $\set{-\nu: V_\nu \neq 0}$. Next, write $\Dist(U_1)_+$ for the augmentation ideal of the algebra $\Dist(U_1)$. By inspecting the low degree terms in the cobar resolution computing $\opH^\bullet(U_1,k) = \opH^\bullet(\Dist(U_1),k)$, one sees that $\opH^1(U_1,k)$ is a $T$-module subquotient of the space $\Hom_k(\Dist(U_1)_+/(\Dist(U_1)_+)^2,k)$. The $T$-weights of the latter space are precisely the simple roots in $\Delta$.
\end{proof}

For any $\mu\in X(T)$, consider the injective hull $I(\mu)$ of $\mu$ in the category of rational $B/U_1$-modules. Set $Q = \bigoplus_{\alpha \in \Delta} I(-s_\alpha \cdot \lambda) \oplus \bigoplus_{\sigma \uparrow \lambda} I(-\sigma)^{\oplus m_\sigma}$. We have $\soc_{B/U_1} M = \soc_{B/U_1} Q$ by Corollary \ref{corollary:socle}, so there exists an injection $M \hookrightarrow Q$. To show that $\soc_{B/U_1}M = M$, it suffices to show that no weight from the second socle layer of $Q$ can be a weight of $M$. (Recall that the second socle layer of $Q$ is defined as $\soc_{B/U_1} (Q/\soc_{B/U_1} Q)$.) For $\mu \in X$, one has $I(\mu) \cong k[U/U_1] \otimes \mu$ as a $B/U_1$-module by \cite[I.3.11]{Jantzen:2003}, where $k[U/U_1]$ denotes the coordinate ring of the unipotent group $U/U_1$.

\begin{lemma} \label{lemma:secondsocle}
The second socle layer of the $B/U_1$-module $I(\mu)$ consists of one-dimensional modules of the form $\mu + p^m \gamma$ with $\gamma \in \Delta$ and $m > 0$.
\end{lemma}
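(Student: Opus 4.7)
The plan is to exploit the identification $I(\mu) \cong k[U/U_1] \otimes \mu$ as $B/U_1$-modules: tensoring with the $1$-dimensional character $\mu$ merely shifts $T$-weights uniformly, so it suffices to describe the second socle layer of $I(0) = k[U/U_1]$ and then translate by $\mu$. For an injective module $I(\mu)$ in a locally finite category, the multiplicity of a simple $S$ in the second socle layer equals $\dim \Ext^1_{B/U_1}(S, \mu)$. Since every simple rational $B/U_1$-module is one-dimensional of the form $-\nu$ with $\nu \in X(T)$, this reduces the problem to computing $\dim \Ext^1_{B/U_1}(-\nu, \mu) = \dim \opH^1(B/U_1, \mu+\nu)$ for each $\nu \in X(T)$.

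Next I would apply the LHS spectral sequence to the split extension $1 \to U/U_1 \to B/U_1 \to T \to 1$. Since $T$ is a torus and so has trivial higher rational cohomology, and since $\mu + \nu$ is a trivial $(U/U_1)$-module, the spectral sequence collapses to an isomorphism
\[
\opH^1(B/U_1, \mu+\nu) \;\cong\; \bigl(\opH^1(U/U_1, k) \otimes (\mu + \nu)\bigr)^T \;\cong\; \opH^1(U/U_1, k)_{-\mu-\nu},
\]
the $(-\mu - \nu)$-weight space of $\opH^1(U/U_1, k)$.

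The central computation is therefore to identify the $T$-weights of $\opH^1(U/U_1,k)$. Using the isomorphism $U/U_1 \cong U^{(1)}$ (the Frobenius twist of $U$), one obtains $\opH^1(U/U_1, k) \cong \opH^1(U,k)^{(1)}$ as $T$-modules, with weights multiplied by $p$. The additive characters of $U$ factor through the abelianization $U^{ab} \cong \prod_{\gamma \in -\Delta} U_\gamma$, and the $k$-linear combinations of Frobenius powers $x_\gamma^{p^i}$ of the simple-root coordinate functions exhaust them, giving $\opH^1(U,k)$ the $T$-weights $\{p^i\gamma : \gamma \in \Delta,\ i \geq 0\}$, each with multiplicity one. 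Twisting by Frobenius yields $\opH^1(U/U_1,k)$ with weights $\{p^m\gamma : \gamma \in \Delta,\ m \geq 1\}$, each with multiplicity one. Combining with the preceding step, $\dim \opH^1(B/U_1, \mu+\nu) = 1$ precisely when $-\mu - \nu = p^m \gamma$ for some $\gamma \in \Delta$ and $m \geq 1$, i.e., when $-\nu = \mu + p^m\gamma$; otherwise the $\Ext^1$ vanishes. This yields the asserted description of the second socle layer, with each such module appearing exactly once.

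The main obstacle is the precise identification of the $T$-module $\opH^1(U/U_1,k)$: the Frobenius-twist argument sketched above is the cleanest route, but demands care in tracking the $T$-action through the twist. An alternative would be to compute $\opH^1(U/U_1, k)$ as the kernel of $\opH^1(U,k) \to \opH^1(U_1,k)^{U/U_1}$ appearing in the five-term exact sequence of $1 \to U_1 \to U \to U/U_1 \to 1$, using Lemma \ref{lemma:H1weights} to pin down the image.
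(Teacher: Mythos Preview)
Your proposal is correct and reaches the same conclusion as the paper, but by a genuinely different route. Both arguments begin by reducing to $I(0)$ and identifying the multiplicity of a simple $\nu$ in the second socle layer with $\dim \opH^1(B/U_1,-\nu)$. From there the paper applies the LHS spectral sequence for the normal subgroup $U_1 \lhd B$ to embed $\opH^1(B/U_1,-\nu) \hookrightarrow \opH^1(B,-\nu)$, and then invokes Andersen's computation \cite[Corollary 2.4]{Andersen:1984a} of $\opH^1(B,-\nu)$ to conclude that $\nu$ must have the form $p^m\gamma$; the condition $m>0$ then comes from the observation that all $T$-weights of $k[U/U_1]\cong k[U^{(1)}]$ are divisible by~$p$. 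You instead use the LHS sequence for the semidirect product $B/U_1 \cong (U/U_1)\rtimes T$, which collapses (because $T$ has no higher rational cohomology) to identify $\opH^1(B/U_1,-\nu)$ with a $T$-weight space of $\opH^1(U/U_1,k)$, and then you compute $\opH^1(U/U_1,k)\cong \opH^1(U,k)^{(1)}$ directly from the description of additive characters of $U$ as $p$-polynomials in the simple-root coordinates.

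Your approach is more self-contained, avoiding the citation of Andersen's result, and it yields slightly more: you obtain that each weight $\mu+p^m\gamma$ ($\gamma\in\Delta$, $m>0$) occurs in the second socle layer with multiplicity exactly one, whereas the paper's injection argument only shows that the second socle weights lie among these. The paper's approach, by contrast, is shorter given that Andersen's theorem is already in play elsewhere in the paper, and it sidesteps the need to track the $T$-action through the Frobenius-twist identification $U/U_1\cong U^{(1)}$. For the application in Section~\ref{subsection:injectivehullweights} only the necessary direction is used, so either argument suffices.
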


\begin{proof}
It suffices to describe the second socle layer of the $B/U_1$-module $I(0) \cong k[U/U_1]$. Let
\[
0 \rightarrow k \stackrel{\varepsilon}{\rightarrow} I(0) \stackrel{d_0}{\rightarrow} I_1 \rightarrow I_2 \rightarrow \cdots
\]
be a minimal injective resolution of the $B/U_1$-module $k$. Then $\soc I_1 \cong \soc (I(0)/\soc I(0))$. Also, for all $\nu \in X(T)$ and all $i \geq 0$, one has $\Hom_{B/U_1}(\nu,I_i) \cong \Ext_{B/U_1}^i(\nu,k) \cong \opH^i(B/U_1,-\nu)$. In particular, the weight $\nu$ occurs in the second socle layer of $I(0)$ with multiplicity $\dim \Hom_{B/U_1}(\nu,I_1) = \dim \opH^1(B/U_1,-\nu)$. Now consider the LHS spectral sequence
\[
E_2^{i,j} = \opH^i(B/U_1,\opH^j(U_1,-\nu)) \Rightarrow \opH^{i+j}(B,-\nu).
\]
It gives rise to the 5-term exact sequence
\[
0 \rightarrow \opH^1(B/U_1,-\nu) \rightarrow \opH^1(B,-\nu) \rightarrow E_2^{0,1} \rightarrow E_2^{2,0} \rightarrow E_2.
\]
By \cite[Corollary 2.4]{Andersen:1984a}, $\opH^1(B,-\nu) = 0$ unless $\nu = p^m \gamma$ for some $\gamma \in \Delta$ and some $m \geq 0$. Then the weight $\nu$ occurs in the second socle layer of $I(0)$ only if $\nu = p^m \gamma$ for some $\gamma \in \Delta$ and some $m \geq 0$. But the weights of $T$ in $I(0) \cong k[U/U_1] \cong k[U^{(1)}]$ are all divisible by $p$, so if $\nu = p^m \gamma$ occurs in the second socle layer of $I(0)$, then necessarily $m > 0$.
\end{proof}

Set $Q_{\text{Kos}} = \bigoplus_{\alpha \in \Delta} I(-s_\alpha \cdot \lambda)$, a submodule of $Q$. By Lemmas \ref{lemma:H1weights} and \ref{lemma:secondsocle}, if the second socle layer of $Q_{\text{Kos}}$ contains a vector of the same weight as a vector in $M$, then $-s_\alpha \cdot \lambda + p^m \gamma = \beta - \nu$ for some $\alpha,\beta,\gamma \in \Delta$, some $m > 0$, and some weight $\nu$ of $L(\lambda)$. Equivalently,
\begin{equation} \label{eq:magicequation}
\lambda - \nu = -\beta+ (\lambda + \rho, \alpha^\vee ) \alpha + p^m \gamma.
\end{equation}
Since $\nu \leq \lambda$, the right-hand-side of \eqref{eq:magicequation} must be an element of $\N \Phi^+$. Since $\alpha,\beta,\gamma$ are simple roots, this implies that $\beta \in \set{\alpha,\gamma}$.

Given $J \subseteq \Delta$, let $H_J^0(\lambda)$ be the induced module of highest weight $\lambda$ for the standard Levi subgroup $L_J$ of $G$; see \cite[II.5.21]{Jantzen:2003}. Then
\begin{equation} \label{eq:Leviinduced}
H^0_J(\lambda) = \bigoplus_{\theta \in \N J} H^0(\lambda)_{\lambda - \theta}
\end{equation}
by \cite[II.5.21]{Jantzen:2003}. Suppose equation \eqref{eq:magicequation} holds. Then 
\begin{equation} \label{eq:rank2constraints}
\nu = \lambda - [ -\beta + (\lambda + \rho,\alpha^\vee)\alpha + p^m\gamma].
\end{equation} 
Since $\nu$ is a weight of $L(\lambda)$, and hence also of $H^0(\lambda)$, we conclude from \eqref{eq:Leviinduced} that $\nu$ must be a weight of $H^0_{J}(\lambda)$ for some $J \subseteq \Delta$ with $\abs{J} \leq 2$.

Now set $Q_{\text{KL}} = \bigoplus_{\sigma \uparrow \lambda} I(-\sigma)^{\oplus m_\sigma}$. As before, if the second socle layer of $Q_{\text{KL}}$ contains a vector of the same weight as a vector in $M$, then
\begin{equation} \label{eq:sigmamagicequation}
-\sigma + p^m \gamma = \beta - \nu
\end{equation}
for some dominant weight $\sigma \in X(T)_+$ with $\sigma \uparrow \lambda$, some $\gamma,\beta \in \Delta$, some integer $m > 0$, and some weight $\nu$ of $L(\lambda)$. Taking the inner product with the dual root $\gamma^\vee$, we get 
\begin{equation} \label{eq:innerproductwithgammavee}
(-\nu,\gamma^\vee)+(\sigma,\gamma^\vee)=p^{m}(\gamma,\gamma^\vee)-(\beta,\gamma^\vee)=2p^m-(\beta,\gamma^\vee),
\end{equation}
and hence
\begin{equation} \label{eq:KLconstraint}
(-\nu,\gamma^\vee)+(\sigma,\gamma^\vee)\geq 2p^m-2. 
\end{equation}

\subsection{Semisimplicity of \texorpdfstring{$U_1$}{U1} cohomology}

We can now give conditions under which the $\Ext$-group $\Ext_{U_1}^1(L(\lambda),k)$ is semisimple as a $B/U_1$-module, that is, $\Ext_{U_1}^1(L(\lambda),k) = \soc_{B/U_1} \Ext_{U_1}^1(L(\lambda),k)$.

\begin{theorem} \label{theorem:U1-coho} Let $\lambda \in X(T)_+$ with $\lambda \leq \omega_j$ for some fundamental weight $\omega_j$. Assume that
\begin{center}
\begin{tabular}{ll}
$p > 2$ & if $\Phi$ has type $A_n$, $D_n$; \\
$p > 3$ & if $\Phi$ has type $B_n$, $C_n$, $E_6$, $E_7$, $F_4$;  \\
$p > 5$ & if $\Phi$ has type $E_{8}$ or $G_{2}$.
\end{tabular}
\end{center}
Then as a $B/U_1$-module,
\[
\Ext^1_{U_1}(L(\lambda), k) = \soc_{B/U_1} \Ext_{U_1}^1(L(\lambda),k) = \bigoplus_{\alpha \in \Delta} -s_\alpha \cdot \lambda \oplus \bigoplus_{\sigma\uparrow \lambda} (-\sigma)^{\oplus m_\sigma},
\]
where $m_\sigma=\dim \Ext^1_G(L(\lambda),H^0(\sigma))$. 
\end{theorem}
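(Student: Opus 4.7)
The plan is to apply the weight constraints developed in Section \ref{subsection:injectivehullweights} to the socle description from Corollary \ref{corollary:socle}. Set $M = \Ext^1_{U_1}(L(\lambda),k)$ and let $Q = Q_{\text{Kos}} \oplus Q_{\text{KL}}$, where $Q_{\text{Kos}} = \bigoplus_{\alpha\in\Delta} I(-s_\alpha\cdot\lambda)$ and $Q_{\text{KL}} = \bigoplus_{\sigma\uparrow\lambda,\,\sigma\in X(T)_+} I(-\sigma)^{\oplus m_\sigma}$. Since $\soc_{B/U_1}M = \soc_{B/U_1}Q$ by Corollary \ref{corollary:socle}, there is an inclusion $M \hookrightarrow Q$, and to prove the theorem it suffices to show that no $T$-weight of $M$ appears in the second socle layer of $Q$.

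By Lemma \ref{lemma:H1weights}, every weight of $M$ has the form $\beta - \nu$ with $\beta \in \Delta$ and $\nu$ a weight of $L(\lambda)$, while Lemma \ref{lemma:secondsocle} describes the second socle layer of $I(\mu_0)$ as one-dimensional pieces of weight $\mu_0 + p^m\gamma$, $\gamma\in\Delta$, $m \geq 1$. I would treat the two summands separately. For the Kostant piece $Q_{\text{Kos}}$, an overlap yields the paper's equation \eqref{eq:magicequation}; positivity of the right-hand side forces $\beta \in \set{\alpha,\gamma}$, and $\nu$ must lie in $H^0_J(\lambda)$ for some $J \subseteq \Delta$ with $\abs{J} \leq 2$. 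This reduces matters to a finite check on the rank-$\leq 2$ subsystems $A_1$, $A_1 \times A_1$, $A_2$, $B_2$/$C_2$, and $G_2$. In each case I would expand
\[
\nu = \lambda + \beta - (\lambda+\rho,\alpha^\vee)\alpha - p^m\gamma
\]
in the simple roots of the Levi and compare with the explicit weights in $H^0_J(\lambda)$ (given by \eqref{eq:Leviinduced}). The hypothesis $\lambda \leq \omega_j$ keeps the coefficients of $\nu - \lambda$ in terms of simple roots uniformly small, so a $p^m$-shift with $m \geq 1$ cannot be absorbed once $p$ meets the stated bound.

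For the Kazhdan--Lusztig piece $Q_{\text{KL}}$, the key inequality is \eqref{eq:KLconstraint}:
\[
(-\nu,\gamma^\vee) + (\sigma,\gamma^\vee) \geq 2p^m - 2.
\]
Since $\sigma$ is dominant with $\sigma \leq \lambda \leq \omega_j$, and $\nu$ is a weight of $L(\lambda)$, both pairings are bounded by universal constants depending only on $\Phi$ and $j$. I would enumerate, type by type, the maxima of $(-\nu,\gamma^\vee)$ over weights $\nu$ of $L(\omega_j)$ and of $(\sigma,\gamma^\vee)$ over dominant $\sigma \leq \omega_j$, and then verify that the sum is strictly less than $2p-2$ whenever $p$ satisfies the hypothesis. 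This rules out $m \geq 1$ and forces $Q_{\text{KL}}$ to equal its socle.

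The main obstacle will be the type-by-type rank-$\leq 2$ bookkeeping for the Kostant piece, particularly in the exceptional types $E_7$, $E_8$, $F_4$, $G_2$, where the Levi subsystems and the values $(\lambda+\rho,\alpha^\vee)$ are most varied; this is where the hypotheses stronger than those of Corollary \ref{corollary:socle} (namely $p > 3$ for $B_n$, $C_n$, $E_6$ and $p > 5$ for $E_8$, $G_2$) are expected to play their role. A secondary subtlety is verifying uniformity across the Hasse diagrams of weights $\leq \omega_j$ when cataloguing the bounds for the Kazhdan--Lusztig inequality; here one can cite \emph{op.\ cit.}\ Section \ref{subsection:Hassediagrams} for the explicit lists already compiled.
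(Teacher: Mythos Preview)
Your proposal is correct and follows essentially the same route as the paper: embed $M$ in $Q=Q_{\text{Kos}}\oplus Q_{\text{KL}}$, use Lemmas \ref{lemma:H1weights} and \ref{lemma:secondsocle} to constrain weights in the second socle layer, treat the Kostant piece via the rank-$\leq 2$ reduction \eqref{eq:rank2constraints} with a case check against Section \ref{subsection:fundamentalweights}, and treat the Kazhdan--Lusztig piece via the inequality \eqref{eq:KLconstraint}. The only notable difference is in the execution of the $Q_{\text{KL}}$ bound: where you propose a type-by-type enumeration of the maxima of $(-\nu,\gamma^\vee)$ and $(\sigma,\gamma^\vee)$, the paper instead conjugates $\gamma$ by $w\in W$ to a dominant root $\widetilde\alpha$ and obtains the uniform estimate $(-\nu,\gamma^\vee)\leq (\omega_i,\widetilde\alpha^\vee)$ (together with $(\sigma,\gamma^\vee)\leq 3$ from the Hasse diagrams), which avoids most of the casework; the paper also short-circuits types $A_n$ and $D_n$ by observing that $Q_{\text{KL}}=0$ there via Corollary \ref{corollary:corsoc}.
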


\begin{proof}
We show that no weight in the second socle layer of $Q = Q_{\text{Kos}} \oplus Q_{\text{KL}}$ can be a weight of $M$. First suppose that a weight from the second socle layer of $Q_{\text{Kos}}$ is a weight of $M$. Then by the discussion in Section \ref{subsection:injectivehullweights}, there exists a subset $J = \set{\alpha,\beta,\gamma} \subseteq \Delta$ with $\abs{J} \leq 2$, an integer $m > 0$, and a weight $\nu$ of $H_J^0(\lambda)$ such that $\lambda - \nu = -\beta +(\lambda+\rho,\alpha^\vee)\alpha + p^m\gamma$. In Section \ref{subsection:fundamentalweights} we consider the restriction of $\lambda$ to all root subsystems of $\Phi$ of rank $\leq 2$, and in each case compute all possible values for $\lambda - \nu$. An elementary case-by-case analysis shows that, under the stated restrictions on $p$, the equation $\lambda - \nu = -\beta +(\lambda+\rho,\alpha^\vee)\alpha + p^m\gamma$ has no solutions, so no weight in the second socle layer of $Q_{\text{Kos}}$ can be a weight of $M$.

Now suppose that a weight from the second socle layer of $Q_{\text{KL}}$ is a weight of $M$. Note that $Q_{\text{KL}} = \{ 0 \}$ in types $A_{n}$ and $D_{n}$ by Corollary \ref{corollary:corsoc}, because in these types $\lambda\le\omega_{j}$ implies $\lambda=\omega_{i}$ for some $i$, and $L(\omega_{i})=V(\omega_{i})$ (cf.\ Section \ref{section:ABD}). So we may assume that $\Phi$ is not type $A_{n}$ or $D_{n}$.
Then by \eqref{eq:sigmamagicequation}, there exist simple roots $\beta,\gamma \in \Delta$, a dominant weight $\sigma \in X(T)_+$ with $\sigma \uparrow \lambda$, an integer $m > 0$, and a weight $\nu$ of $L(\lambda)$ such that $-\sigma + p^m \gamma = \beta - \nu$.  Assume for the moment that $\Phi$ is not of type $E_{8}$. Then by \eqref{eq:KLconstraint} and the stated assumption on $p$,
\begin{equation} \label{eq:KLinequality}
(-\nu,\gamma^\vee)+(\sigma,\gamma^\vee)\geq 8.
\end{equation}
Since $\sigma \uparrow \lambda$ implies $\sigma \leq \lambda$, one gets $(\sigma,\gamma^\vee) \in \set{0,1,2,3}$ from the list of possible values for $\sigma$ in the Appendix (Section \ref{subsection:Hassediagrams}). Then necessarily $(-\nu,\gamma^\vee)\geq 5$.

Choose $w\in W$ such that $\wt{\alpha} := w\gamma$ is dominant (so $\widetilde\alpha$ will be either the highest short root or the highest long root). Then, recalling that $\lambda^{*}=-w_{0}\lambda$ and $-w_{0}\omega_{j}=\omega_{i}$ for some $i$, 
\begin{equation} \label{eq:nuinequality}
(-\nu,\gamma^\vee)=(-w\nu,w\gamma^\vee)=(-w\nu,\widetilde\alpha^\vee) \leq (-w_0\lambda,\widetilde\alpha^\vee)\leq (-w_0\omega_j,\widetilde\alpha^\vee) = (\omega_i,\widetilde\alpha^\vee),
\end{equation}
which is strictly less than 5, and leads to a contradiction. Finally, suppose $\Phi$ is of type $E_8$ with $p>5$. Then the previous argument leads to a contradiction, because \eqref{eq:KLinequality} becomes $(-\nu,\gamma^\vee)+(\sigma,\gamma^\vee)\geq 12$, whereas we still have $(\sigma,\gamma^\vee) \le 3$, and \eqref{eq:nuinequality} demonstrates that $(-\nu,\gamma^\vee)\leq 6$. 
\end{proof}

\section{Applications} \label{section:applications}

\subsection{An isomorphism with \texorpdfstring{$\mathbf{G}$}{G} cohomology} \label{subsection:isowithG}

The results of Section \ref{section:U1cohomology} enable us to give conditions under which the restriction map $\opH^1(G,L(\lambda)) \rightarrow \opH^1(\Gfq,L(\lambda))$ is an isomorphism, and hence allow us to prove Theorem \ref{maintheorem1}. For the sake of smoothness of exposition, we handle the case of $\Phi$ of type $G_2$ first as a separate result, because some subtleties arise there when treating the case $p=5$. Our proof of Theorem \ref{maintheorem1} for type $G_2$ also establishes Theorems \ref{maintheorem2} and \ref{maintheorem3} for $G_2$.

\begin{theorem} \label{theorem:G2}
Suppose $p> 3$, $\Phi$ is of type $G_{2}$, and $\lambda\leq \omega_{j}$. Then 
\begin{enumalph}
\item $\res:\opH^{1}(G,L(\lambda))\rightarrow \opH^{1}(\Gfq,L(\lambda))$
is an isomorphism;
\item $\opH^{1}(G,L(\lambda))=\opH^{1}(\Gfq,L(\lambda))=0$ 
\end{enumalph} 
\end{theorem}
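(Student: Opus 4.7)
The plan is to prove (a) and (b) simultaneously by showing that both $\opH^{1}(G,L(\lambda))$ and $\opH^{1}(\Gfq,L(\lambda))$ vanish; once this is established, (a) is immediate. The case $\lambda = 0$ is handled by Remark \ref{remark:lambda0}. Inspection of the dominance order shows that in $G_{2}$ the only other dominant weights satisfying $\lambda \leq \omega_{j}$ are $\omega_{1}$ and $\omega_{2}$. Since $p > 3$ forces $L(\omega_{i}) = V(\omega_{i})$ for $i = 1,2$, Corollary \ref{corollary:corsoc} gives
\[
\soc_{B/U_{1}} \Ext^{1}_{U_{1}}(L(\omega_{i}), k) \cong \bigoplus_{\alpha \in \Delta} -s_{\alpha} \cdot \omega_{i},
\]
with no Kazhdan--Lusztig contribution (that is, $Q_{\text{KL}} = 0$). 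Because $w_{0} = -1$ in $G_{2}$, we have $L(\omega_{i})^{*} \cong L(\omega_{i})$ and hence $\opH^{1}(U_{1}, L(\omega_{i})) \cong \Ext^{1}_{U_{1}}(L(\omega_{i}), k)$. By Corollary \ref{corollary:vanishingresult}(b) it therefore suffices to show that $\Ext^{1}_{U_{1}}(L(\omega_{i}), k)^{\Tfq} = 0$.

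For $p \geq 7$, Theorem \ref{theorem:U1-coho} applies and identifies $\Ext^{1}_{U_{1}}(L(\omega_{i}), k)$ with its socle displayed above. A direct computation using $\alpha_{1} = 2\omega_{1} - \omega_{2}$ and $\alpha_{2} = -3\omega_{1} + 2\omega_{2}$ yields the four socle weights
\[
3\omega_{1} - 2\omega_{2}, \quad -4\omega_{1} + 2\omega_{2}, \quad 2\omega_{1} - 2\omega_{2}, \quad -6\omega_{1} + 3\omega_{2},
\]
none of which lies in $(q-1)X(T)$ when $q > 3$, so the $\Tfq$-fixed points vanish.

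The remaining case is $p = 5$, where Theorem \ref{theorem:U1-coho} is not directly applicable (its hypotheses impose $p > 5$ for $G_{2}$). Here one re-runs the argument of that theorem: since $Q_{\text{KL}}=0$, a weight from the second socle layer of $Q_{\text{Kos}}$ can occur in $M := \Ext^{1}_{U_{1}}(L(\omega_{i}), k)$ only if there is a solution to the Kostant-layer equation of Section \ref{subsection:injectivehullweights},
\[
\omega_{i} - \nu = -\beta + (\omega_{i} + \rho, \alpha^{\vee})\alpha + p^{m}\gamma,
\]
with $\alpha, \beta, \gamma \in \Delta$, $m \geq 1$, and $\nu$ a weight of $H^{0}_{J}(\omega_{i})$ for some $J \subseteq \Delta$ with $\lvert J\rvert \leq 2$. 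Because $\omega_{i} - \nu$ lies in $\N\Delta$ and is bounded by $\omega_{i} - w_{0}\omega_{i} = 2\omega_{i}$, only $m=1$ can contribute when $p=5$, leaving finitely many tuples to verify. This case analysis is the main obstacle; it parallels (but forms a small fragment of) the enumeration used to prove Theorem \ref{theorem:U1-coho}, and it yields no solutions, so the socle description still captures all of $M$.

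Having shown $\Ext^{1}_{U_{1}}(L(\omega_{i}), k)^{\Tfq} = 0$ in every case, Corollary \ref{corollary:vanishingresult}(b) gives $\opH^{1}(\Gfq, L(\lambda)) = 0$. The injectivity of the Cline--Parshall--Scott--van der Kallen restriction map \cite[Theorem 7.4]{Cline:1977}, recalled just before Theorem \ref{maintheorem1}, then forces $\opH^{1}(G, L(\lambda)) = 0$, proving (b). Part (a) is now immediate since the restriction map between two trivial groups is vacuously an isomorphism.
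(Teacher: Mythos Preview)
Your argument tracks the paper's closely for $p\ge 7$, and for $\lambda\in\{0,\omega_1\}$ at $p=5$. The genuine gap is in the case $p=5$, $\lambda=\omega_2$, where your claim that the Kostant-layer equation \eqref{eq:magicequation} has no solutions is false. With $\alpha=\alpha_2$, $\beta=\gamma=\alpha_1$, $m=1$, one has $(\omega_2+\rho,\alpha_2^\vee)=2$, and the right-hand side becomes
\[
-\alpha_1 + 2\alpha_2 + 5\alpha_1 \;=\; 4\alpha_1 + 2\alpha_2,
\]
which is exactly the value $\theta=\omega_2-\nu$ corresponding to the weight $\nu=-2\omega_1+\omega_2$ of $H^0(\omega_2)=L(\omega_2)$ (see Table~\ref{table:G2}). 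So the second-socle-layer obstruction is \emph{not} ruled out by your enumeration, and you cannot conclude that $\Ext^1_{U_1}(L(\omega_2),k)$ equals its socle. This is precisely why Theorem~\ref{theorem:U1-coho} imposes $p>5$ for $G_2$, and why the paper observes only that its proof ``also works in the case when $p>3$ and $\lambda\in\{0,\omega_1\}$'' --- pointedly omitting $\omega_2$.

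The paper handles $p=5$, $\lambda=\omega_2$ by an entirely different route: it passes to ordinary Lie algebra cohomology via the injection $\opH^1(U_1,L(\omega_2))\hookrightarrow\opH^1(\fraku,L(\omega_2))$, observes that $\opH^1(\fraku,L(\omega_2))$ is a $T$-subquotient of $(\fraku/[\fraku,\fraku])^*\otimes L(\omega_2)$, checks directly that for $p=5$ the only $\Tfq$-invariant weights there are the $T$-invariant ones, and then uses \cite[Theorem 2.4.1]{UGA:2009} together with an explicit coroot computation to rule out $T$-invariants in $\opH^1(\fraku,L(\omega_2))$. You could alternatively attempt to salvage your line by listing all weights $\beta-\nu$ (Lemma~\ref{lemma:H1weights}) and checking none lies in $(q-1)X(T)$ except possibly $0$, then arguing separately that $0$ does not occur; but as written, the $p=5$, $\lambda=\omega_2$ step is incorrect.
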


\begin{proof} First note that part (b) implies part (a), so we will prove part (b). When $\lambda\leq \omega_{j}$, we have 
$L(\lambda)=H^{0}(\lambda)$  \cite[\S 4.6]{Jantzen:1991}. Therefore, $\opH^{1}(G,L(\lambda))=0$ \cite[II.4.13]{Jantzen:2003}. From Theorem~\ref{theorem:relatecoho}, we have an injective map from $\opH^{1}(G(\Fq),L(\lambda))\hookrightarrow \opH^{1}(U_{1},L(\lambda))^{\Tfq}$.  According to Theorem~\ref{theorem:U1-coho} and Corollary \ref{corollary:corsoc}, we have for $p>5$, 
\[
\opH^{1}(U_{1},L(\lambda))\cong \Ext^1_{U_1}(L(-w_{0}\lambda), k) = \bigoplus_{\alpha \in \Delta} -s_\alpha \cdot (-w_{0} \lambda ) = \bigoplus_{\alpha \in \Delta} -s_\alpha \cdot \lambda.
\]
We observe that the proof of Theorem \ref{theorem:U1-coho} also works in the case when $p>3$ and $\lambda \in \set{0,\omega_1}$. Now for $p>3$, $\alpha \in \Delta$, and $\lambda \in \set{0,\omega_1,\omega_2}$, the weight $-s_\alpha \cdot \lambda$ is not divisible by $q-1$. Thus, for $p > 3$ and $\lambda \in \set{0,\omega_1}$, and for $p > 5$ and $\lambda = \omega_2$, we have $\opH^{1}(G(\Fq),L(\lambda)) = \opH^{1}(U_{1},L(\lambda))^{\Tfq}=0$. 

We now consider the case $p=5$ and $\lambda=\omega_{2}$. By \cite[I.9.19]{Jantzen:2003}, there exists an injective map from $\opH^{1}(U_{1},L(\omega_{2}))$ into the ordinary Lie algebra cohomology group $\opH^{1}(\fraku,L(\omega_{2}))$. Taking $\Tfq$-fixed points yields an injection $\opH^{1}(U_{1},L(\omega_{2}))^{\Tfq}\hookrightarrow \opH^{1}(\fraku,L(\omega_{2}))^{\Tfq}$. Next observe that $\opH^{1}(\fraku,L(\omega_{2}))$ is a $T$-subquotient of $\opH^{1}(\fraku,k)\otimes L(\omega_{2})\cong (\fraku/[\fraku,\fraku])^{*}\otimes L(\omega_{2})$ (cf.\ the proof of \cite[Proposition 2.5.1]{UGA:2009}). The weights of $(\fraku/[\fraku,\fraku])^{*}$ are $\alpha_{1}$ and $\alpha_{2}$, and the module $L(\omega_{2})$ is the adjoint representation. Then when $p=5$, one can show that the only $\Tfq$-invariants in $(\fraku/[\fraku,\fraku])^{*}\otimes L(\omega_{2})$ are the $T$-invariants, and hence that $\opH^{1}(\fraku,L(\omega_{2}))^{\Tfq}=\opH^{1}(\fraku,L(\omega_{2}))^{T}$. 

Suppose $\opH^{1}(\fraku,L(\omega_{2}))^{T}\neq 0$. Then by \cite[Theorem 2.4.1]{UGA:2009}, there exist $w\in W$ and $\nu \in X(T)$ such that $-w\cdot(-w_{0}\omega_{2})+5\nu=-w\cdot \omega_{2}+5\nu=0$. Then $(w\cdot \omega_2, \beta^{\vee})\in 5 \Z $ for all $\beta\in \Phi$. Observe that $(w\cdot \omega_{2},\beta^{\vee})=(\omega_{2}+\rho,w^{-1}\beta^{\vee})-(\rho,\beta^{\vee})\in 5 \Z$. Set $\beta=\alpha_{0}$. Then $\beta^{\vee}=2\alpha_{1}^{\vee}+3\alpha_{2}^{\vee}$. This implies that $(\omega_{2}+\rho,w^{-1}\alpha_{0}^{\vee})\in 5 \Z$. But this is a contradiction, because the possibilities for $w^{-1}\alpha_{0}^{\vee}$ are $\{\pm (2\alpha_{1}^{\vee}+3\alpha_{2}^{\vee}), \pm \alpha_{1}^{\vee}, \pm (\alpha_{1}^{\vee}+3\alpha_{2}^{\vee})\}$, and none of these have inner product with $\omega_{2}+\rho$ that is divisible by 5. So $\opH^{1}(\fraku,L(\omega_{2}))^{T}=0$, and hence $\opH^{1}(\Gfq,L(\omega_{2}))=0$.
\end{proof}

We now prove Theorem \ref{maintheorem1} for the remaining Lie types.

\begin{proof}[Proof of Theorem \ref{maintheorem1}]
The case when $\Phi$ is of type $G_2$ is handled by Theorem \ref{theorem:G2}, so we assume for the remainder of the proof that $\Phi$ is not of type $G_2$. By Remark \ref{remark:lambda0} we may also assume $\lambda\ne 0$. By Theorem \ref{theorem:U1-coho},
\[
\opH^1(U_1,L(\lambda)) \cong \Ext^1_{U_1}(L(\lambda^*), k) = \bigoplus_{\alpha \in \Delta} -s_\alpha \cdot \lambda^* 
\oplus \bigoplus_{\sigma\uparrow \lambda^*} (-\sigma)^{\oplus m_\sigma},
\]
where $m_\sigma=\dim \Ext^1_G(L(\lambda^*),H^0(\sigma))$. Note that $\lambda^* = -w_0\lambda$ is again dominant and less than or equal to a fundamental dominant weight. Consider $-s_\alpha \cdot \lambda^*=-\lambda^*+(\lambda^*+\rho,\alpha^\vee)\alpha$. Consulting the lists, provided in Section \ref{subsection:Hassediagrams}, of dominant weights less than or equal to a fundamental dominant weight, and using the Cartan matrix to rewrite $\alpha$ as a sum of fundamental dominant weights, one can check that the coefficients of $-s_{\alpha}\cdot \lambda^*$ are not all divisible by $q-1$ when $q>3$, and hence that the weight $-s_\alpha \cdot \lambda^*$ does not contribute to the $\Tfq$-invariants in $\opH^1(U_1,L(\lambda))$. Consequently, 
\[
\opH^1(U_1,L(\lambda))^{\Tfq}= \bigoplus_{\sigma\uparrow \lambda^*} [(-\sigma)^{\oplus m_\sigma}]^{\Tfq}.
\]
Now observe that since $\lambda^*$ is less than or equal to a fundamental dominant weight, the only weight $\sigma \in X(T)_+$ satisfying $\sigma < \lambda^*$ that gives a $\Tfq$-invariant in $\opH^1(U_1,L(\lambda))$ is the zero weight. Then $\dim \opH^{1}(U_{1},L(\lambda))^{\Tfq} = m_0 = \dim \opH^1(G,L(\lambda))$. 

Consider the commutative diagram \eqref{equation:keycommutativediagram}. The top restriction map factors as $\opH^{1}(G,L(\lambda)) \to \opH^{1}(B,L(\lambda)) \to \opH^{1}(U,L(\lambda))^{T}$, and both of these maps are well-known isomorphisms \cite[I.6.9, II.4.7]{Jantzen:2003}. Composing the second of these with the right vertical restriction map in \eqref{equation:keycommutativediagram} gives the restriction $\opH^{1}(B,L(\lambda)) \to \opH^{1}(U_{1},L(\lambda))^{\Tfq}$, which (by dualizing Corollary \ref{corollary:BtoU1injection}) is an injection. Therefore, the composite restriction $\opH^{1}(G,L(\lambda)) \to \opH^{1}(U,L(\lambda))^{T} \to \opH^{1}(U_{1},L(\lambda))^{\Tfq}$ is injective. But the first and last spaces have the same dimension, $m_{0}$, so the composition is an isomorphism. Therefore the bottom map $\Gamma$ in \eqref{equation:keycommutativediagram} is surjective. But by Theorem \ref{theorem:relatecoho}, $\Gamma$ is injective, so it is an isomorphism. Finally, we conclude that the left vertical restriction map $\opH^{1}(G,L(\lambda)) \to \opH^{1}(\Gfq,L(\lambda))$ must also be an isomorphism.
\end{proof}

\begin{remark}
We note that there are examples for which $\opH^1(U_1,L(\omega_j))^{\Tfp} \neq 0$ but for which $\opH^1(\Gfp,L(\omega_j)) = 0$. For example, suppose $\Phi$ has type $A_n$, $n \geq 2$, $j=2$, and $p=3$. Then $\Gfp = SL_{n+1}(\F_3)$. For $i\neq j$, $-s_{\alpha_i} \cdot \omega_j = -\omega_j +\alpha_i$. Also, $\alpha_1 = 2\omega_1 - \omega_2$. Then $-s_{\alpha_1} \cdot \omega_2 = 2 \omega_1 - 2 \omega_2 = (p-1)(\omega_1 - \omega_2)$, which yields a $T(\F_3)$-invariant in $\opH^1(U_1,L(\omega_2))$. On the other hand, $\opH^1(G(\F_3), L(\omega_2))=0$ by \cite[Proposition 8.5]{Jones:1975}. So the condition $q>3$ is essential in type $A_{n}$.
\end{remark}

\subsection{Vanishing conditions} Theorem \ref{maintheorem1} lets us reduce the problem of computing the cohomology group $\opH^1(\Gfq,L(\lambda))$ for the finite group $\Gfq$ to the problem of computing the corresponding cohomology group for the full algebraic group $G$, where results are typically easier to obtain. In particular, we can apply the Linkage Principle for $G$ as well as standard facts on induced and Weyl modules for $G$ to deduce conditions under which the cohomology group $\opH^1(\Gfq,L(\lambda))$ vanishes.

\begin{theorem} \label{theorem:finitegroupvanishing}
Assume that $p > 2$ when $\Phi$ is of type $A_{n}$ or $D_{n}$, $p>5$ when $\Phi$ is of type $E_8$, and $p > 3$ in all other cases. Assume that $q > 3$. Let $\lambda \in X(T)_+$ with $\lambda \leq \omega_j$ for some fundamental dominant weight $\omega_j$. 
Then $\opH^1(\Gfq,L(\lambda)) = 0$ if either $L(\lambda) = H^0(\lambda)$ or if $\lambda$ is not linked to zero under the dot action of the affine Weyl group $W_p$.
\end{theorem}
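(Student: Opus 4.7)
The plan is to reduce everything to the computation of the algebraic group cohomology $\opH^{1}(G,L(\lambda))$ by invoking Theorem \ref{maintheorem1}, and then handle the two hypotheses separately using standard facts from Jantzen's book.

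First, the hypotheses on $p$, $q$, and $\lambda$ in the statement of Theorem \ref{theorem:finitegroupvanishing} are exactly those required by Theorem \ref{maintheorem1}, so the restriction map
\[
\res : \opH^{1}(G,L(\lambda)) \longrightarrow \opH^{1}(\Gfq,L(\lambda))
\]
is an isomorphism. Hence it suffices to prove that $\opH^{1}(G,L(\lambda))=0$ under either of the two listed conditions.

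Suppose first that $L(\lambda)=H^{0}(\lambda)$. If $\lambda=0$, the vanishing of $\opH^{1}(G,k)$ was already recorded in Remark \ref{remark:lambda0}. If $\lambda\neq 0$, then
\[
\opH^{1}(G,L(\lambda))\;=\;\opH^{1}(G,H^{0}(\lambda))\;\cong\;\Ext^{1}_{G}(V(0),H^{0}(\lambda))\;=\;0,
\]
since $V(0)=k$ and the general vanishing statement $\Ext^{i}_{G}(V(\mu),H^{0}(\lambda))=0$ for all $i>0$ whenever $\mu\leq\lambda$ applies with $\mu=0$ (cf.\ \cite[II.4.13]{Jantzen:2003}).

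Now suppose instead that $\lambda$ is not linked to zero under the dot action of $W_{p}$. Writing
\[
\opH^{1}(G,L(\lambda))\;=\;\Ext^{1}_{G}(k,L(\lambda))\;=\;\Ext^{1}_{G}(L(0),L(\lambda)),
\]
the Linkage Principle \cite[II.6.17]{Jantzen:2003} implies that this $\Ext$-group vanishes unless $0$ and $\lambda$ lie in the same orbit of $W_{p}$ under the dot action, which is precisely the hypothesis we have excluded. Hence $\opH^{1}(G,L(\lambda))=0$, and the proof is complete in both cases.

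There is no real obstacle here; the content of the theorem lies entirely in Theorem \ref{maintheorem1}, and the two vanishing conditions are direct consequences of (i) Kempf vanishing / the standard $\Ext$-vanishing between Weyl and induced modules and (ii) the Linkage Principle applied to $\opH^{1}(G,L(\lambda))=\Ext^{1}_{G}(L(0),L(\lambda))$.
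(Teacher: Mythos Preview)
Your proof is correct and follows essentially the same strategy as the paper: invoke Theorem \ref{maintheorem1} to reduce to $\opH^{1}(G,L(\lambda))$, then use \cite[II.4.13]{Jantzen:2003} for the case $L(\lambda)=H^{0}(\lambda)$ and the Linkage Principle for the second case. The only cosmetic difference is that the paper phrases the linkage argument via the dual, writing $\opH^{i}(G,L(\lambda))\cong\Ext^{i}_{G}(L(\lambda^{*}),H^{0}(0))$ and citing \cite[II.6.20]{Jantzen:2003}, whereas you cite \cite[II.6.17]{Jantzen:2003} directly for $\Ext^{1}_{G}(L(0),L(\lambda))$; both are valid.
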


\begin{proof}
By Theorem \ref{maintheorem1}, the restriction map $\opH^1(G,L(\lambda)) \rightarrow \opH^1(\Gfq,L(\lambda))$ is an isomorphism. If $L(\lambda) = H^0(\lambda)$, then $\opH^i(G,L(\lambda)) = \Ext_G^i(V(0),H^0(\lambda)) = 0$ for all $i \geq 1$ by \cite[II.4.13]{Jantzen:2003}. If $\lambda$ is not linked to zero under the dot action of the affine Weyl group $W_p$, then neither is $\lambda^*$, and $\opH^i(G,L(\lambda)) \cong \Ext_G^i(L(\lambda^*),H^0(0)) = 0$ for all $i \geq 0$ by \cite[II.6.20]{Jantzen:2003}.
\end{proof}

\subsection{1975 CPS result}

Recall that a non-zero weight $\lambda$ is called \emph{minuscule} if for all $\alpha \in \Phi$, $(\lambda,\alpha^\vee) \in \set{-1,0,1}$. A list of the dominant minuscule weights for each indecomposable root system is given in  Table \ref{table:minusculeweights}. Note that the minuscule weights are all fundamental dominant weights, and are not elements of the root lattice, as can be seen by comparing the list in Table \ref{table:minusculeweights} with the data in \cite[Table 13.1]{Humphreys:1978}.

\begin{table}[htbp]
\begin{tabular}{ll}
\hline
Type & Minuscule Weights \\
\hline
$A_n$ & $\omega_i$, $(1 \leq i \leq n)$ \\
$B_n$ & $\omega_n$ \\
$C_n$ & $\omega_1$ \\
$D_n$ & $\omega_1,\omega_{n-1},\omega_n$ \\
$E_6$ & $\omega_1,\omega_6$ \\
$E_7$ & $\omega_7$ \\
$E_8$ & none \\
$F_4$ & none \\
$G_2$ & none \\
\hline
& \\
\end{tabular}
\caption{List of minuscule weights.} \label{table:minusculeweights}
\end{table}

We now recover results of Cline, Parshall and Scott \cite{Cline:1975} for minuscule highest weights.

\begin{corollary} \label{corollary:minusculevanishing}
Assume that $p>2$ when $\Phi$ is of type $A_n$ or $D_n$, $p>5$ when $\Phi$ is of type $E_8$, and $p > 3$ in all other cases. Assume also that $q > 3$. Let $\omega_j$ be a minuscule dominant weight. Then
\[
\opH^1(\Gfq, L(\omega_j)) = 0.
\]
\end{corollary}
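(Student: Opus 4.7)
The plan is to invoke Theorem \ref{theorem:finitegroupvanishing}, which already packages the two sufficient conditions for vanishing: either $L(\lambda)=H^0(\lambda)$, or $\lambda$ is not linked to $0$ under the dot action of the affine Weyl group $W_p$. Both conditions are classical consequences of $\omega_j$ being minuscule, so the argument should be short.

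First I would verify the linkage criterion. Recall $W_p = W \ltimes p\Z\Phi$ acting via $w \cdot \lambda = w(\lambda+\rho) - \rho$. If $\omega_j$ were linked to $0$, then $\omega_j + \rho = w(\rho + p\mu)$ for some $w \in W$ and some $\mu \in \Z\Phi$, which would force $\omega_j = (w\rho - \rho) + pw\mu \in \Z\Phi$. But by the parenthetical remark preceding the corollary (comparing Table \ref{table:minusculeweights} with \cite[Table 13.1]{Humphreys:1978}), no minuscule weight lies in the root lattice. Hence $\omega_j$ is not linked to $0$, and Theorem \ref{theorem:finitegroupvanishing} immediately gives $\opH^1(\Gfq, L(\omega_j))=0$.

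Alternatively — and this gives a useful sanity check — one can observe that since $\omega_j$ is minuscule, $W$ acts transitively on the weights of $H^0(\omega_j)$, so every weight multiplicity equals $1$ and $H^0(\omega_j)$ is already simple. Thus $L(\omega_j)=H^0(\omega_j)$, which is the other vanishing hypothesis of Theorem \ref{theorem:finitegroupvanishing}.

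There is no real obstacle here: the corollary is essentially a translation exercise. The one thing to be slightly careful about is that the hypotheses on $p$ and $q$ in the corollary match those of Theorem \ref{theorem:finitegroupvanishing} verbatim (in particular $p>2$ in types $A_n$, $D_n$; $p>5$ in type $E_8$; $p>3$ otherwise; and $q>3$), and that minuscule weights are automatically of the form $\omega_j$ for a single fundamental dominant weight, so the requirement $\lambda\le\omega_j$ is trivially met with $\lambda=\omega_j$.
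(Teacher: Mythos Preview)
Your proof is correct and follows exactly the paper's approach: the paper's proof is the two-line observation that $\omega_j \notin \Z\Phi$ implies $\omega_j$ is not $W_p$-linked to $0$, followed by an appeal to Theorem~\ref{theorem:finitegroupvanishing}. Your additional verification that linkage would force $\omega_j \in \Z\Phi$, and your alternative route via $L(\omega_j)=H^0(\omega_j)$, are both sound and simply add detail the paper omits.
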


\begin{proof}
As $\omega_j$ is not an element of the root lattice $\Z\Phi$, it cannot be linked to zero under the dot action of the affine Weyl group $W_p$. Now apply Theorem \ref{theorem:finitegroupvanishing}.
\end{proof}

\section{Results for Classical Groups} \label{section:classicalgroups}

\subsection{Types \texorpdfstring{$A_n$, $B_n$, $D_n$}{An, Bn, Dn}} \label{section:ABD}

For the classical groups, the condition $\lambda \leq \omega_{j}$ implies that $\lambda=0$ or that $\lambda = \omega_i$ for some $1 \leq i \leq j$. We proceed to verify Theorems \ref{maintheorem2} and \ref{maintheorem3} for the classical groups. For types $A_{n}$, $B_{n}$, and $D_{n}$, if $p>2$ then  $L(\lambda) = H^0(\lambda) = V(\lambda)$ by \cite[II.8.21]{Jantzen:2003}. Then under the hypotheses of Theorems \ref{maintheorem2} and \ref{maintheorem3}, $\opH^1(\Gfq,L(\lambda)) = 0$ by Theorem \ref{theorem:finitegroupvanishing}.

\subsection{Non-vanishing in type \texorpdfstring{$C_n$}{Cn}}

Assume that $\Phi$ has type $C_n$ with $n \geq 3$. As noted in the Appendix, $\lambda \leq \omega_i$ implies that $\lambda = \omega_j$ for some $0 \leq j\leq i$ (where $\omega_0:=0$). Kleshchev and Sheth \cite{Kleshchev:1999,Kleshchev:2001}, using results of Adamovitch on the submodule structure of the Weyl modules $V(\omega_j)$, completely determine the structure $\opH^1(G, L(\omega_j))$. We formulate their result as follows.

\begin{theorem}[Kleshchev--Sheth] \label{theorem:KlSh}
Let $\Phi$ be of type $C_n$ with $n \geq 3$ and $p>2$. Write $n+1 = b_0 + b_1 p+ \cdots + b_t p^t$ with $0 \leq b_i < p$ and $b_t \neq 0$. Then
\[
\opH^{1}(G,L(\omega_j)) \cong
\begin{cases} k & \text{if $j=2b_{i}p^{i}$ for some $0 \leq i < t$ with $b_{i} \neq 0$}, \\ 0 & \text{otherwise}.
\end{cases}
\]
\end{theorem}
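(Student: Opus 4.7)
The plan is to translate the Kleshchev--Sheth computation, which rests on Adamovitch's description of the submodule structure of the Weyl module $V(\omega_{j})$ in type $C_{n}$, into the form stated. I would proceed in three steps.

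First I would reduce the $G$-cohomology computation to a question about the head of the radical of $V(\omega_{j})$. In type $C_{n}$ the longest Weyl element acts as $-1$, so $L(\omega_{j})^{*}\cong L(\omega_{j})$ and $\opH^{1}(G,L(\omega_{j}))\cong \Ext^{1}_{G}(L(\omega_{j}),k)$. Applying $\Hom_{G}(-,H^{0}(0))$ to the short exact sequence
\[
0 \to R \to V(\omega_{j}) \to L(\omega_{j}) \to 0
\]
with $R=\rad V(\omega_{j})$, and using the Ext-vanishing $\Ext^{i}_{G}(V(\omega_{j}),H^{0}(0))=0$ for $i\geq 1$ from \cite[II.4.13]{Jantzen:2003}, together with $\Hom_{G}(V(\omega_{j}),k)=\Hom_{G}(L(\omega_{j}),k)=0$ when $j\geq 1$, the long exact sequence collapses to $\opH^{1}(G,L(\omega_{j})) \cong \Hom_{G}(R,k)$. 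This dimension is the multiplicity with which the trivial module $L(0)=k$ appears in the head of $R$.

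Next I would invoke Adamovitch's structure theorem for $V(\omega_{j})$ in type $C_{n}$ with $p$ odd, which gives a complete description of the submodule lattice and the layers of the socle/radical filtration. The composition factors are all of the form $L(\omega_{i})$ with $i\leq j$ and $i\equiv j\pmod{2}$, and the combinatorics of which factors occur and at what depth is governed by the $p$-adic expansion $n+1=\sum b_{i}p^{i}$. Finally I would match Adamovitch's layer data against the requirement that $L(0)$ lie in the head of $R$ (i.e.\ as an immediate quotient of $R$, not deeper in the filtration). For $j$ odd, $L(0)$ is not even a composition factor of $V(\omega_{j})$ by parity, so the cohomology vanishes. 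For $j$ even, the condition that $L(0)$ sit immediately below $L(\omega_{j})$ works out to be exactly $j=2b_{i}p^{i}$ for some $0\leq i<t$ with $b_{i}\neq 0$, and in each such case the multiplicity is~$1$, so that $\opH^{1}(G,L(\omega_{j}))\cong k$.

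The main obstacle is the third step: extracting from Adamovitch's intricate combinatorial description the precise condition under which $L(0)$ occupies the top layer of $R$ rather than sitting deeper in the radical filtration. This reduces to careful bookkeeping with the $p$-adic digits $b_{i}$ and the behavior of carrying in the expansion of $n+1$, which is essentially where all the work of Kleshchev--Sheth lies. Once this step is done, the formula follows immediately from the reduction in step one.
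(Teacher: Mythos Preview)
Your reduction in Step~1 is correct, and the overall strategy is sound, but your route differs substantially from the paper's. The paper does not go back to Adamovitch's submodule description at all. Instead it simply cites the final statement of Kleshchev--Sheth \cite[Corollary 3.6(ii)]{Kleshchev:2001}, which already gives $\opH^{1}(G,L(\omega_{j}))\in\{0,k\}$ together with a necessary and sufficient condition for nonvanishing, namely: write $n+1-j=\sum a_{i}p^{i}$, then $\opH^{1}\cong k$ iff $j=2(p-a_{i})p^{i}$ for some $i$ with $a_{i}>0$ and either $a_{i+1}<p-1$ or $j<2p^{i+1}$. The entire content of the paper's proof is then an elementary arithmetic verification that this condition is equivalent to the one in the theorem (phrased via the $p$-adic digits $b_{i}$ of $n+1$): one observes that $j=2(p-a_{i})p^{i}<2p^{i+1}$ makes the ``either\dots or'' clause automatic, and then adding $j$ back to $n+1-j$ shows $b_{i}=p-a_{i}$ with at least one higher nonzero digit.

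What you propose in Steps~2--3 is essentially to reprove Kleshchev--Sheth's corollary from scratch via Adamovitch, which is considerably more work than the paper undertakes; you acknowledge this yourself in calling Step~3 the main obstacle. Your approach would of course succeed, since it is precisely what Kleshchev--Sheth did, but the paper's approach buys a two-paragraph proof by taking their result as a black box and doing only the digit translation.
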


\begin{proof}
By \cite[Corollary 3.6(ii)]{Kleshchev:2001}, $\opH^{1}(G,L(\omega_{j}))$ is isomorphic to either $0$ or $k$, and is isomorphic to $k$ under precisely the following conditions: $n+1-j = a_{0}+a_{1}p + \dots + a_{s}p^{s}$ where $0\le a_{i}<p$, and $j=2(p-a_{i})p^{i}$ for some $i$ such that $a_{i}>0$ and either $a_{i+1}<p-1$ or $j<2p^{i+1}$. But if $j=2(p-a_{i})p^{i}$ and $a_{i}>0$, then $j=2p^{i+1}-2a_{i}p^{i} < 2p^{i+1}$, so the ``either\dots or\dots'' condition is always true and thus superfluous. Writing $j=(p-2a_{i})p^{i}+p^{i+1}$, we have
\begin{equation*}
\setlength{\arraycolsep}{2pt}
\renewcommand{\arraystretch}{1.3}
\begin{array}{rlrrrl}
n+1 =& a_{0}+\dots &+&a_{i}p^{i}\ +& a_{i+1}p^{i+1}&+\dots + a_{s}p^{s}\\
          &              &+&(p-2a_{i})p^{i}\ +&               p^{i+1}&\\
        =& a_{0}+\dots &+&(p-a_{i})p^{i}\ +& (a_{i+1}+1)p^{i+1}&+\dots + a_{s}p^{s}.
\end{array}
\end{equation*}
This has the form given in the statement of the theorem, with $0<b_{i}=p-a_{i}<p$, $j=2b_{i}p^{i}$, and at least one nonzero term beyond $b_{i}p^{i}$ in the $p$-adic expansion of $n+1$. 
\end{proof}

Combining Theorem~\ref{theorem:KlSh} with Theorem~\ref{maintheorem1} we obtain:

\begin{corollary} \label{corollary:KlSh}
Let $\Phi$ be of type $C_n$ with $n \geq 3$ and $p>3$. Write $n+1 = b_{0}+b_{1}p+\dots + b_{t}p^{t}$ with $0\le b_{i}<p$ and $b_{t}\ne 0$. Let $\lambda \in X(T)_+$  with $\lambda \leq \omega_j$ for some $j$. Then
\[
\opH^{1}(\Gfq, L(\lambda)) \cong
\begin{cases} k & \text{if $\lambda=\omega_{j}$, $j=2b_{i}p^{i}$ for some $0\le i < t$ with $b_{i}\ne 0$,} \\ 0 & \text{otherwise}.
\end{cases}
\]
\end{corollary}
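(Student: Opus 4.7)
The plan is to combine Theorem \ref{theorem:KlSh}, which computes the algebraic group cohomology $\opH^1(G, L(\omega_j))$ in type $C_n$, with Theorem \ref{maintheorem1}, which identifies the finite group cohomology $\opH^1(\Gfq, L(\lambda))$ with its algebraic group counterpart whenever $\lambda \le \omega_j$. So the corollary should fall out by straightforward bookkeeping.

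The first step is to reduce to the case where $\lambda$ is itself a fundamental dominant weight. In type $C_n$, consulting the list of dominant weights dominated by $\omega_j$ in the Appendix (Section \ref{subsection:Hassediagrams}), the dominant weights $\mu$ satisfying $\mu \le \omega_j$ form a chain of fundamental dominant weights $\omega_i$ with $0 \le i \le j$ (using the convention $\omega_0 := 0$). Hence any $\lambda$ in the hypothesis is of the form $\lambda = \omega_i$ for some such $i$.

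Next, I would dispose of the trivial case $\lambda = 0$ by invoking Remark \ref{remark:lambda0}, which gives $\opH^1(\Gfq, k) = 0$ (assuming $q > 3$, which is an implicit standing hypothesis inherited from Theorem \ref{maintheorem1}). For $\lambda = \omega_i$ with $1 \le i \le j$, the hypotheses of Theorem \ref{maintheorem1} are satisfied since $p > 3$ in type $C_n$ and $\omega_i \le \omega_i$ is a fundamental weight, so the restriction map yields
\[
\opH^1(\Gfq, L(\omega_i)) \cong \opH^1(G, L(\omega_i)).
\]
Applying Theorem \ref{theorem:KlSh} to the right-hand side then identifies this with $k$ precisely when $i = 2b_\ell p^\ell$ for some $0 \le \ell < t$ with $b_\ell \ne 0$, and with $0$ otherwise. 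Relabeling $i$ as $j$ (to match the notation of the statement) gives the claim.

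I do not anticipate any significant obstacle; this is fundamentally a combination step. The only point requiring care is the reuse of the symbol $j$ between the corollary's hypothesis (the index of the upper-bounding fundamental weight) and its conclusion (the actual index of $\lambda$ under the isomorphism $\lambda = \omega_j$); one of these should be renamed in the writeup to avoid ambiguity.
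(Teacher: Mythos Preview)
Your proposal is correct and follows exactly the paper's approach: the paper simply states that the corollary is obtained by combining Theorem~\ref{theorem:KlSh} with Theorem~\ref{maintheorem1}, and you have supplied the routine bookkeeping (reduction to $\lambda=\omega_i$ via the Appendix, the $\lambda=0$ case, and the observation about the implicit $q>3$ hypothesis) that the paper leaves to the reader.
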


The vanishing of $\opH^1(\Gfq,L(\omega_j))$ for $j$ odd and $p>3$ can also be seen from Theorem \ref{theorem:finitegroupvanishing}, since for odd $j$ the fundamental weight $\omega_j$ is not in the root lattice, hence is not linked to zero under the dot action of the affine Weyl group $W_p$.

\section{Results for Exceptional Groups} \label{section:exceptionalgroups}

\subsection{Large prime vanishing}

The results of Section \ref{section:applications} completely compute $\opH^1(\Gfq,L(\lambda))$ when the underlying root system $\Phi$ is of classical type, when $p > 3$ (or $p>2$ and $q>3$ for types $A_n$ and $D_n$), and when $\lambda$ is less than or equal to a fundamental dominant weight. For the exceptional types a number of open cases remain. The following lemma narrows down the list of remaining open cases to only finitely many values of $p$.

\begin{lemma} \label{lemma:largeprimevanishing}
Let $\lambda \in X(T)_+$ with $\lambda \leq \omega_j$ for some fundamental dominant weight $\omega_j$. Set $h_\lambda = (\lambda,\alpha_0^\vee)$, and suppose $p \geq \max \{ h+h_\lambda-1,h_\lambda + 4 \}$. Then $\opH^1(\Gfq,L(\lambda)) = 0$.
\end{lemma}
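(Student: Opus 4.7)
The plan is to combine an alcove-geometry argument with Theorem \ref{theorem:finitegroupvanishing}. First I would observe that $(\lambda+\rho, \alpha_0^\vee) = h_\lambda + (h-1)$, so the hypothesis $p \geq h + h_\lambda - 1$ is precisely the statement that $\lambda$ lies in the closure of the bottom dominant $p$-alcove. A standard consequence of the Strong Linkage Principle (see, e.g., \cite[II.6.13, II.8.22]{Jantzen:2003}) then gives $L(\lambda) = V(\lambda) = H^0(\lambda)$, because no nonzero dominant weight $\mu < \lambda$ can be strongly linked to $\lambda$ when $\lambda$ sits in the closure of the bottom alcove.

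Second, I would verify that the combined bounds $p \geq h + h_\lambda - 1$ and $p \geq h_\lambda + 4$ imply the prime restrictions of Theorem \ref{theorem:finitegroupvanishing}. Every exceptional type satisfies $h \geq 6$, so $p \geq h - 1 \geq 5$, comfortably exceeding the required $p > 3$; and in type $E_8$ we automatically obtain $p \geq h - 1 = 29 > 5$. The auxiliary bound $p \geq h_\lambda + 4$ provides the lower bound $p \geq 5$ in general and also covers classical types where $h$ can be small. Moreover, $q = p^r \geq p \geq 5$, so the standing hypothesis $q > 3$ in Theorem \ref{theorem:finitegroupvanishing} is automatic.

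Applying Theorem \ref{theorem:finitegroupvanishing} in the regime $L(\lambda) = H^0(\lambda)$ then yields $\opH^1(\Gfq, L(\lambda)) = 0$. The only genuine content is the alcove-closure check; the remaining prime and type-by-type bookkeeping is routine, so I do not anticipate a substantive obstacle, aside from noting that the degenerate case $\lambda = 0$ (i.e.\ $h_\lambda = 0$) is subsumed by Remark \ref{remark:lambda0}.
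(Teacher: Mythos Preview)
Your argument is correct, but it differs substantially from the paper's proof. The paper does not pass through Theorem~\ref{theorem:finitegroupvanishing} at all; instead it works directly with $U_1$-cohomology. Using Corollary~\ref{corollary:vanishingresult} it reduces to showing $\opH^1(U_1,L(\lambda))^{\Tfq}=0$, and in fact proves the stronger $\opH^1(U_1,L(\lambda))^{\Tfp}=0$. The bound $p\geq h_\lambda+4$ is used in a weight argument (via Lemma~\ref{lemma:H1weights}) to force $\opH^1(U_1,L(\lambda))^{\Tfp}=\opH^1(U_1,L(\lambda))^{T}$, and then the bound $p\geq h+h_\lambda-1$ enters via the Friedlander--Parshall spectral sequence and \cite[Theorem 4.2.1]{UGA:2009} to identify the $T$-weights of $\opH^1(\fraku,L(\lambda))$ as $\{-s_\alpha\cdot\lambda^*:\alpha\in\Delta\}$, none of which is zero.

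Your route is considerably shorter because it leverages the heavy lifting already done in proving Theorem~\ref{maintheorem1} (on which Theorem~\ref{theorem:finitegroupvanishing} rests). The paper's route is more self-contained at this point in the exposition and makes transparent exactly how each of the two numerical hypotheses is consumed; in your argument the bound $p\geq h_\lambda+4$ serves only the bookkeeping role of ensuring $p\geq 5$, whereas in the paper it does genuine work ruling out nontrivial $\Tfp$-invariants. Both are valid; your observation that $\lambda$ lies in the closure of the bottom alcove, hence $L(\lambda)=H^0(\lambda)$, is a clean shortcut once Theorem~\ref{theorem:finitegroupvanishing} is in hand.
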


\begin{proof}
By Corollary \ref{corollary:vanishingresult}, it suffices to show that $\opH^1(U_1,L(\lambda))^{\Tfq} = 0$. We will show that the possibly larger space $\opH^1(U_1,L(\lambda))^{\Tfp}$ is zero.

By Lemma \ref{lemma:H1weights}, the weights of $\opH^1(U_1,L(\lambda))$ have the form $\beta + \nu$ for some $\beta \in \Delta$ and some weight $\nu$ of $L(\lambda)$. The weight $\beta + \nu$ contributes to the $\Tfp$-invariants of $\opH^1(U_1,L(\lambda))$ only if $\beta + \nu = (p-1)\mu$ for some $\mu \in X(T)$. So suppose $\beta + \nu = (p-1)\mu$. Choose $y \in W$ such that $y\mu \in X(T)_+$. Then $y\beta \in \Phi$ and $y\nu$ is a weight of $L(\lambda)$. If $\mu \neq 0$, then 
$$p-1 \leq (p-1)(y\mu,\alpha_0^\vee) = (y\beta,\alpha_0^\vee) + (y\nu,\alpha_0^\vee) \leq 2 + (\lambda,\alpha_0^\vee) = 2+h_\lambda$$ 
if $\Phi$ does not have type $G_2$, and $p-1 \leq 3+h_\lambda$ if $\Phi$ does have type $G_2$. In either case, the inequality contradicts the assumption $p \geq \max\{ h + h_\lambda-1, h_\lambda+4 \}$ (recall that $h=6$ in type $G_{2}$), so we must have $\mu = 0$. This implies that $\opH^1(U_1,L(\lambda))^{\Tfp} = \opH^1(U_1,L(\lambda))^T$.

Set $\fraku = \Lie(U)$. By \cite[Proposition 1.1]{Friedlander:1986}, there exists a spectral sequence of $B$-modules satisfying
\[
E_2^{2i,j} = S^i(\fraku^*)^{(1)} \otimes \opH^j(\fraku,L(\lambda)) \Rightarrow \opH^{2i+j}(U_1,L(\lambda)),
\]
with $E_2^{i,j}=0$ if $i$ is odd. Then $\opH^1(U_1,L(\lambda))$ is a $T$-module subquotient of $\opH^1(\fraku,L(\lambda))$. We claim that $\opH^1(\fraku,L(\lambda))^T = 0$. The assumption $p \geq h+h_\lambda - 1$ implies for all $\beta \in \Phi^+$ that $(\lambda+\rho,\beta^\vee) \leq p$. Then by \cite[Theorem 4.2.1]{UGA:2009}, the weights of $T$ in $\opH^1(\fraku,L(\lambda))$ are precisely 
$\{ -s_\alpha \cdot \lambda^*: \alpha \in \Delta \}$.\footnote{The introduction to the paper \cite{UGA:2009} states that the Borel subgroup $B$ and its unipotent radical $U$ should correspond to the set of negative roots in $\Phi$. However, for the theorems to be correctly stated 
these groups and the Lie algebra $\mathfrak u = \Lie(U)$ should correspond the set of positive roots in $\Phi$. The weights we have listed here for $\opH^1(\mathfrak u,L(\lambda))$ are the correct weights when the Lie algebra $\mathfrak u$ corresponds to the set of negative roots in $\Phi$.} 
The weight $\lambda^* = -w_0\lambda$ is again a dominant weight less than or equal to a fundamental dominant weight. Now one checks for all $\alpha \in \Delta$ that $-s_\alpha \cdot \lambda^* \neq 0$, and hence $\opH^1(\mathfrak u,L(\lambda))^T=0$. 
So then also $\opH^1(U_1,L(\lambda))^{\Tfp} = \opH^1(U_1,L(\lambda))^T = 0$.
\end{proof}

\subsection{Small prime vanishing}

We now handle many of the smaller values for $p$ not covered by Lemma \ref{lemma:largeprimevanishing}.

\begin{proposition} \label{proposition:genericvanishing}
Let $\lambda\in X(T)_+$  with $\lambda \leq \omega_j$ for some $j$. Suppose that $3 < p \leq 31$, $\Phi$ is of exceptional type, and $p$ does not equal one of the primes listed next to $\lambda$ in the Hasse diagram for $\Phi$ appearing in Section \ref{subsection:Hassediagrams}. If $\Phi = E_8$, assume also that $p>5$. Then $\opH^1(\Gfq, L(\lambda)) = 0$. The conclusion $\opH^1(\Gfq, L(\lambda)) = 0$ also holds if $p=7$, $\Phi = E_8$, and $\lambda = \omega_3$.
\end{proposition}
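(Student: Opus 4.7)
The plan is to reduce the problem to algebraic group cohomology and then to apply the standard dichotomy of Theorem \ref{theorem:finitegroupvanishing}. First I would invoke Theorem \ref{maintheorem1}, whose hypotheses are covered by the assumptions of this proposition (since $p > 3$ in the exceptional non-$E_8$ cases and $p > 5$ in type $E_8$, and since $q > 3$ is implicit via the main theorems this result feeds into), to identify
\[
\opH^{1}(\Gfq, L(\lambda)) \cong \opH^{1}(G, L(\lambda)).
\]
Once the computation is on the algebraic group side, Theorem \ref{theorem:finitegroupvanishing} tells us that $\opH^{1}(G, L(\lambda)) = 0$ whenever either (i) $L(\lambda) = H^{0}(\lambda)$, i.e.\ the Weyl module $V(\lambda)$ is irreducible, or (ii) $\lambda$ is not linked to $0$ under the dot action of the affine Weyl group $W_{p}$.

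The core of the argument is a case-by-case inspection. For each exceptional type $\Phi$, each dominant weight $\lambda \leq \omega_{j}$ in the Hasse diagram of Section \ref{subsection:Hassediagrams}, and each prime $p$ with $3 < p \leq 31$ (and $p > 5$ when $\Phi = E_{8}$), I would carry out two checks: first, test linkage of $\lambda$ to $0$ using the elementary congruence criterion $\lambda \in W_{p} \cdot 0$, which reduces to checking whether some expression of the form $w(\lambda + \rho) - \rho$ lies in $pX(T)$; second, for the (small) set of primes surviving the first test, consult the known simplicity criteria for Weyl modules of exceptional groups (Lübeck's tables) to see whether $V(\lambda) = L(\lambda)$. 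The claim built into the statement is that the list of primes annotated in the Hasse diagrams of Section \ref{subsection:Hassediagrams} is exactly the set of primes in the range where \emph{both} linkage and non-simplicity hold; for every other prime one of conditions (i) or (ii) is satisfied, and Theorem \ref{theorem:finitegroupvanishing} finishes the argument.

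The remaining and genuinely more delicate case is $p = 7$, $\Phi = E_{8}$, $\lambda = \omega_{3}$, where both Theorem \ref{theorem:finitegroupvanishing} criteria fail. Here I would work directly with $\opH^{1}(G, L(\omega_{3}))$ via the short exact sequence
\[
0 \to L(\omega_{3}) \to H^{0}(\omega_{3}) \to H^{0}(\omega_{3})/L(\omega_{3}) \to 0
\]
and the long exact sequence in $G$-cohomology. By Kempf's vanishing theorem $\opH^{i}(G, H^{0}(\mu)) = 0$ for all $i \geq 1$ and all $\mu \in X(T)_{+}$, so $\opH^{1}(G, L(\omega_{3}))$ injects into $\Hom_{G}(k, H^{0}(\omega_{3})/L(\omega_{3}))$. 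Using the known composition factors of $H^{0}(\omega_{3})$ at $p = 7$ (again from Lübeck's tables), together with the fact that each composition factor $L(\mu)$ has highest weight $\mu \ne 0$, the dominant weights appearing in $H^{0}(\omega_{3})/L(\omega_{3})$ contribute no $G$-invariants, forcing the vanishing.

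The main obstacle is not any single conceptual step but rather the verification for the $p=7$, $\lambda = \omega_{3}$ case in type $E_{8}$: one must know enough about the submodule structure (or at least the composition factor multiplicities) of $H^{0}(\omega_{3})$ at this prime to rule out a nonzero $G$-invariant in the quotient. The rest is bookkeeping across finitely many (small) primes and the weights appearing in the Hasse diagrams.
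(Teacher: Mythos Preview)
Your handling of the generic case is essentially the paper's argument, though you slightly misread what the Hasse diagrams encode: the primes listed next to a weight $\lambda$ are exactly those for which $\lambda$ \emph{is} linked to $0$ under $W_p$, not those for which linkage and non-simplicity simultaneously hold. So the paper's proof is a one-liner here: if $p$ is not on the list then $\lambda$ is not linked to $0$, and Theorem~\ref{theorem:finitegroupvanishing} applies with no need to consult simplicity tables.

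The real discrepancy is in the special case $\Phi=E_8$, $p=7$, $\lambda=\omega_3$. You assert that both criteria of Theorem~\ref{theorem:finitegroupvanishing} fail, but in fact only the linkage criterion fails; one has $L(\omega_3)=H^0(\omega_3)$ at $p=7$ (this is in Jantzen's 1991 paper, \S4.6), so the first criterion applies and the proof is immediate. Your proposed workaround via the quotient $H^0(\omega_3)/L(\omega_3)$ is therefore analyzing the zero module. Incidentally, even had the quotient been nonzero, the long exact sequence gives a \emph{surjection} $\Hom_G(k,H^0(\omega_3)/L(\omega_3))\twoheadrightarrow \opH^1(G,L(\omega_3))$, not an injection in the other direction; your conclusion would survive, but the stated reasoning is backwards.
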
 

\begin{proof}
If $3< p \leq 31$ and $p$ does not equal one of the primes listed next to $\lambda$ in the Hasse diagram for $\Phi$, then $\lambda$ is not linked to zero under the dot action of the affine Weyl group $W_p$. If $p=7$, $\Phi = E_8$, and $\lambda = \omega_3$, then $L(\lambda) = H^0(\lambda)$ by \cite[\S 4.6]{Jantzen:1991}. In any case, $\opH^1(\Gfq,L(\lambda)) = 0$ by Theorem \ref{theorem:finitegroupvanishing}.
\end{proof}

When $\Phi$ is of exceptional type, the largest possible value for $h+h_{\lambda}-1$ in Lemma \ref{lemma:largeprimevanishing} is 35, which occurs for type $E_{8}$ when $3\omega_{8} \le \lambda \le \omega_{4}$. Thus Lemma \ref{lemma:largeprimevanishing} and Proposition \ref{proposition:genericvanishing} show that the only cases of Theorems \ref{maintheorem2} and \ref{maintheorem3} which we have not thus far explicitly calculated are $\opH^1(\Gfq,L(\lambda))$ when the underlying root system is of exceptional type and when the prime $p$ is one of those appearing next to the weight $\lambda$ in the Hasse diagram for $\Phi$ in Section \ref{subsection:Hassediagrams}.

\subsection{Non-zero cohomology groups for types \texorpdfstring{$\mathbf{F_4}$, $\mathbf{E_7}$, and $\mathbf{E_8}$}{F4, E7, and E8}}

Let $\E$ be the Euclidean space spanned by $\Phi$. Recall that the affine Weyl group $W_p$ is generated by the set of simple reflections $\set{s_\alpha : \alpha \in \Delta} \subset GL(\E)$ together with the affine reflection $s_0 := s_{\alpha_0,p}$, which is defined for $\lambda \in \E$ by
\[ s_0(\lambda) = \lambda - ((\lambda,\alpha_0^\vee)-p)\alpha_0 = s_{\alpha_0}(\lambda)+p\alpha_0.
\]

\begin{theorem} \label{theorem:nonvanishingtranslation}
Suppose that one of the following conditions is satisfied:
\begin{enumerate}
\item $\Phi$ has type $F_4$ and $p=13$, so $s_0 \cdot 0 = 2\omega_4$;
\item $\Phi$ has type $E_7$ and $p=19$, so $s_0 \cdot 0 = 2\omega_1$; or
\item $\Phi$ has type $E_8$ and $p=31$, so $s_0 \cdot 0 = 2\omega_8$.
\end{enumerate}
Then $\opH^1(\Gfq,L(s_0 \cdot 0)) \cong k$.
\end{theorem}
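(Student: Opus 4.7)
The plan is to reduce via Theorem \ref{maintheorem1} to the algebraic-group cohomology $\opH^{1}(G,L(\lambda))$, and then to extract this dimension from the structure of $H^{0}(\lambda)$. In each of the three cases the prime $p$ comfortably exceeds the bound required by Theorem \ref{maintheorem1} ($p>3$ for $F_{4}$ and $E_{7}$, $p>5$ for $E_{8}$), $q=p>3$, and $\lambda=2\alpha_{0}$ is readily seen from the weight lists of Section \ref{subsection:Hassediagrams} to satisfy $\lambda \leq \omega_{k}$ for some fundamental weight $\omega_{k}$. Hence the restriction map is an isomorphism and it suffices to prove $\opH^{1}(G,L(\lambda))\cong k$.

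To analyze $\opH^{1}(G,L(\lambda))$, I would apply $\Hom_{G}(k,-)$ to the short exact sequence
\[
0 \to L(\lambda) \to H^{0}(\lambda) \to Q \to 0.
\]
By the strong linkage principle, every composition factor $L(\mu)$ of $Q$ satisfies $\mu \uparrow \lambda$ with $\mu<\lambda$. A direct inspection of the orbit $W_{p}\cdot 0$ shows that the only dominant weight strictly below $\lambda = s_{0}\cdot 0$ is $\mu = 0$, so every composition factor of $Q$ is trivial. Since $\Ext_{G}^{1}(k,k)=\opH^{1}(G,k)=0$ by Remark \ref{remark:lambda0}, $Q$ is semisimple, isomorphic to $k^{m}$ where $m = [H^{0}(\lambda):L(0)]$. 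Combining Kempf's vanishing $\opH^{i}(G,H^{0}(\lambda))=0$ for $i\geq 1$ with $\Hom_{G}(k,H^{0}(\lambda))=0$ (since $\lambda\neq 0$), the long exact sequence yields $\opH^{1}(G,L(\lambda)) \cong Q^{G} \cong k^{m}$.

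The crux of the proof is to show $m=1$. I would apply Jantzen's sum formula to $V(\lambda)$:
\[
\sum_{i\geq 1}\ch V(\lambda)^{i} = \sum_{\alpha\in\Phi^{+}}\ \sum_{0<mp<(\lambda+\rho,\alpha^{\vee})} \nu_{p}(mp)\,\chi(s_{\alpha,mp}\cdot\lambda).
\]
For $\alpha = \alpha_{0}$, one has $(\lambda+\rho,\alpha_{0}^{\vee}) = 2p-(h-1)$; since $h-1<p$ in all three cases (namely $h-1 = 11,17,29$ versus $p = 13,19,31$), only $m=1$ contributes, giving $\chi(s_{\alpha_{0},p}\cdot\lambda) = \chi(0) = 1$. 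The main obstacle is verifying that no other positive root $\alpha$ contributes, i.e.\ that $(\lambda+\rho,\alpha^{\vee}) \leq p$ whenever $\alpha \neq \alpha_{0}$. In the simply-laced cases $E_{7}$ and $E_{8}$, one has $(\alpha_{0},\alpha^{\vee}) \leq 1$ and $(\rho,\alpha^{\vee}) = \hght(\alpha) \leq h-2$ for such $\alpha$, so $(\lambda+\rho,\alpha^{\vee}) \leq h < p$. In type $F_{4}$ one must further distinguish short and long roots when passing to coroots, but the coroot height bound together with $(2\alpha_{0},\alpha^{\vee})\leq 2$ again yields $(\lambda+\rho,\alpha^{\vee}) \leq 12 < 13 = p$. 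Consequently the Jantzen sum equals $\ch L(0)$, forcing $[V(\lambda):L(0)] = 1$, so $m=1$ and $\opH^{1}(\Gfq,L(\lambda))\cong k$.
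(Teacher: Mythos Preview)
Your proof is correct, but takes a more hands-on route than the paper. Both arguments begin identically, invoking Theorem~\ref{maintheorem1} to reduce to $\opH^{1}(G,L(\lambda))$. At that point the paper simply cites \cite[II.7.19(c)]{Jantzen:2003} (with $w=1$), which gives $\Ext_G^{1}(L(s_0\cdot 0),k)\cong k$ directly. You instead reprove this special case from scratch: the long exact sequence reduces the question to the composition multiplicity $[H^{0}(\lambda):L(0)]$, and you extract that multiplicity from the Jantzen sum formula, verifying that the only contributing reflection is $s_{\alpha_0,p}$. Your bound $(\lambda+\rho,\alpha^\vee)\le p$ for $\alpha\ne\alpha_0$ is correct in all three cases (noting that $(\alpha_0,\alpha^\vee)\in\{0,1\}$ for any positive $\alpha\ne\alpha_0$, and $(\rho,\alpha^\vee)\le h-2$ for such $\alpha$). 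Your approach is more self-contained and illustrates concretely why $p=h+1$ is the relevant prime; the paper's citation is of course shorter. One small wording slip: you write ``$q=p>3$'', but the theorem applies for any $q=p^r$; since $p\in\{13,19,31\}$ the hypothesis $q>3$ of Theorem~\ref{maintheorem1} is satisfied regardless.
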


\begin{proof}
For the root systems under consideration, the involution $\mu \mapsto \mu^* := -w_0\mu$ on $X(T)$ is the identity, so by Theorem \ref{maintheorem1}, $\opH^1(\Gfq,L(s_0 \cdot 0)) \cong \opH^1(G,L(s_0 \cdot 0)) \cong \Ext_G^1(L(s_0 \cdot 0),k)$. Now
\[
\Ext_G^i(L(s_0 \cdot 0),k) \cong \begin{cases} k & \text{if $i=1$, and} \\ 0 & \text{if $i \neq 1$} \end{cases}
\]
by the case $w=1$ of \cite[II.7.19(c)]{Jantzen:2003}.
\end{proof}

The following additional non-vanishing result holds for type $E_7$.

\begin{theorem}
Suppose $\Phi$ has type $E_7$ and $p=7$. Then $\opH^1(\Gfq,L(\omega_6)) \cong k$.
\end{theorem}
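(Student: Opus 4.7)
By Theorem \ref{maintheorem1} (the hypotheses $p=7>3$ for $E_{7}$ and $q>3$ are satisfied), the restriction map $\opH^{1}(G,L(\omega_{6})) \to \opH^{1}(\Gfq,L(\omega_{6}))$ is an isomorphism, so it suffices to prove $\opH^{1}(G,L(\omega_{6})) \cong k$. The standard long exact sequence for $\Hom_{G}(k,-)$ applied to
\[
0 \to L(\omega_{6}) \to H^{0}(\omega_{6}) \to Q \to 0, \qquad Q := H^{0}(\omega_{6})/L(\omega_{6}),
\]
immediately reduces the problem to showing $Q^{G} \cong k$. Indeed, $\Hom_{G}(k,L(\omega_{6})) = 0 = \Hom_{G}(k,H^{0}(\omega_{6}))$ because $\omega_{6}\ne 0$, and $\Ext^{1}_{G}(k,H^{0}(\omega_{6}))=0$ since $k = V(0)$ has a Weyl filtration and $H^{0}(\omega_{6})$ has a good filtration (cf.\ \cite[II.4.13]{Jantzen:2003}). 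The sequence therefore gives $\opH^{1}(G,L(\omega_{6})) \cong \Hom_{G}(k,Q) = Q^{G}$.

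Next, I would analyze the composition factors and socle structure of $H^{0}(\omega_{6})$ at $p=7$ via the Jantzen sum formula
\[
\sum_{i\ge 1}\ch V(\omega_{6})^{i} \;=\; \sum_{\alpha\in\Phi^{+}}\ \sum_{0 < mp < \langle \omega_{6}+\rho,\alpha^{\vee}\rangle}\nu_{p}(mp)\,\chi\bigl(s_{\alpha,mp}\cdot \omega_{6}\bigr),
\]
which (by duality of Jantzen layers) also governs the layer structure of $H^{0}(\omega_{6})$. Using $w_{0}=-1$ in $E_{7}$ so that $\omega_{6}^{*}=\omega_{6}$, one expects to verify two facts: (i) the trivial weight $0$ appears with total multiplicity exactly one in the sum, so $[V(\omega_{6}):L(0)] = 1$; and (ii) the unique copy of $L(0)$ lies in the top Jantzen layer $V(\omega_{6})^{1}/V(\omega_{6})^{2}$ of the radical, equivalently, in the socle of $Q = H^{0}(\omega_{6})/L(\omega_{6})$. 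Together these yield $Q^{G} \cong k$, completing the proof.

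\textbf{Main obstacle.} The real work is the case-by-case evaluation of the Jantzen sum over the $63$ positive roots of $E_{7}$: for each $\alpha\in\Phi^{+}$ one must compute $\langle \omega_{6}+\rho,\alpha^{\vee}\rangle$ to find the admissible values of $m$, then evaluate $s_{\alpha,mp}\cdot \omega_{6}$ and determine its dominant $W$-conjugate (or detect that it lies on a wall, contributing $0$), finally tallying the multiplicity of $0$ in the resulting signed sum of Weyl characters. Only a handful of roots will produce a $\chi(\mu)$ whose dominant $W\cdot$-conjugate is $0$, but verifying the cancellations so that the net contribution is $+1$ is the delicate point. A secondary issue is confirming that this trivial composition factor sits in the first layer above $L(\omega_{6})$ rather than deeper in the filtration, which is needed to guarantee that it actually contributes to $Q^{G}$ and not merely to the composition series of $Q$. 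A possible shortcut is to appeal to Lübeck's published decomposition data for $E_{7}$ in characteristic $7$ in place of (or to cross-check) the direct sum-formula calculation.
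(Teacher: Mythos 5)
Your reduction is sound: Theorem \ref{maintheorem1} gives $\opH^1(\Gfq,L(\omega_6))\cong\opH^1(G,L(\omega_6))$, and the long exact sequence applied to $0\to L(\omega_6)\to H^0(\omega_6)\to Q\to 0$ together with $\Ext^1_G(V(0),H^0(\omega_6))=0$ correctly yields $\opH^1(G,L(\omega_6))\cong Q^G$. However, the actual proof is not finished. You have identified the two facts you would need --- that $[V(\omega_6):L(0)]=1$ and that the $L(0)$ contributes to $\soc_G Q$ --- and you have proposed the Jantzen sum formula as the tool, but you have not carried out the calculation; you explicitly flag this as the ``main obstacle.'' Until the sum is evaluated (or looked up), there is no proof. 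There is also a technical wrinkle in your plan as stated: lying in the first Jantzen layer $V(\omega_6)^1/V(\omega_6)^2$ is not in general equivalent to lying in the top of $\rad_G V(\omega_6)$ (equivalently, in $\soc_G Q$); the Jantzen filtration and the socle/radical filtration can differ, so even with the sum formula in hand you would need a further argument to place $L(0)$. That said, if the sum formula shows $V(\omega_6)$ has \emph{only} the two composition factors $L(\omega_6)$ and $L(0)$, each with multiplicity one, then $\rad_G V(\omega_6)\cong k$ follows at once and both sub-claims are automatic.

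The paper avoids all of this by quoting the Gilkey--Seitz table \cite[Table, p.~414]{Gilkey:1988}, which gives $\dim H^0(\omega_6)=1+\dim L(\omega_6)$ at $p=7$. This forces $\rad_G V(\omega_6)\cong k$ on purely dimensional grounds, and then $\opH^1(G,L(\omega_6))\cong\Ext^1_G(L(\omega_6),k)\cong k$ by \cite[II.2.14]{Jantzen:2003}. Your suggested shortcut via L\"ubeck's decomposition-number tables would serve the same purpose and would close your gap; I would recommend citing one of these sources rather than attempting the $63$-root Jantzen sum by hand.
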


\begin{proof}
First, $\opH^1(\Gfq,L(\omega_6)) \cong \opH^1(G,L(\omega_6))$ by Theorem \ref{maintheorem1}. Next, if $p=7$ then $\dim H^0(\omega_6) = 1+\dim L(\omega_6)$ by \cite[Table, p.\ 414]{Gilkey:1988}. Equivalently, $\dim V(\omega_6) = 1+\dim L(\omega_6)$. The Weyl module $V(\omega_6)$ has head isomorphic to $L(\omega_6)$, so we conclude for $p=7$ that $\rad_G V(\omega_6) \cong k$. Then $\opH^1(G,L(\omega_6)) \cong \Ext_G^1(L(\omega_6),k) \cong k$ by \cite[II.2.14]{Jantzen:2003}.
\end{proof}

\subsection{Vanishing via translation functors for type \texorpdfstring{$\mathbf{E_8}$}{E8}}

Suppose $\Phi$ has type $E_8$ and $p=31$. Let $\lambda \in X(T)_+$, and suppose $\opH^1(G,L(\lambda)) \cong \Ext_G^1(L(\lambda),k) \neq 0$. Then by \cite[II.2.14]{Jantzen:2003}, the trivial module $L(0) \cong k$ must appear as a composition factor of $H^0(\lambda)$. We will show that this cannot happen if $\lambda \in \set{\omega_7+\omega_8,\omega_6+\omega_8}$.

First suppose $\lambda = \omega_7+\omega_8$. Consider the translation functor $T_0^{\omega_8}$. We have $T_0^{\omega_8}(L(0)) \cong L(\omega_8)$ by \cite[II.7.15]{Jantzen:2003}. Also, $(s_0s_8) \cdot 0 = \omega_7+\omega_8$ and $(s_0s_8) \cdot \omega_8 = 2\omega_7 - \omega_8 \notin X(T)_+$, so $T_0^{\omega_8}(H^0(\omega_7+\omega_8)) \cong H^0(2\omega_7-\omega_8) = 0$ by \cite[II.7.11]{Jantzen:2003}. But $T_0^{\omega_8}$ is an exact functor, so if $L(0)$ occurred as a composition factor in $H^0(\omega_7+\omega_8)$, we would have $T_0^{\omega_8}(L(0)) = 0$, a contradiction. Similarly, $(s_0s_8s_7) \cdot 0 = \omega_6 + \omega_8$ and $(s_0 s_8 s_7) \cdot \omega_8 = \omega_6+\omega_7-\omega_8 \notin X(T)_+$, so $T_0^{\omega_8}(H^0(\omega_6+\omega_8)) = 0$, and $L(0)$ cannot occur as a composition factor in $H^0(\omega_6+\omega_8)$. We have proved:

\begin{theorem} \label{theorem:vanishingviatranslation}
Suppose $\Phi$ has type $E_8$ and $p=31$. Let $\lambda \in \set{\omega_7+\omega_8,\omega_6+\omega_8}$. Then
\[ \opH^1(\Gfq,L(\lambda)) \cong \opH^1(G,L(\lambda)) = 0. \]
\end{theorem}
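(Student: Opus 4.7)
The plan is to reduce to algebraic group cohomology via Theorem \ref{maintheorem1}, and then exploit translation functors to rule out $L(0)$ as a composition factor of $H^0(\lambda)$. Since $p = 31 > 5$ satisfies the hypothesis for $\Phi = E_8$ and $q \geq 31 > 3$, the restriction map is an isomorphism $\opH^1(\Gfq, L(\lambda)) \cong \opH^1(G, L(\lambda))$. For $E_8$ the longest Weyl element acts as $-\id$, so duality gives $\opH^1(G, L(\lambda)) \cong \Ext^1_G(L(\lambda), k)$. By \cite[II.2.14]{Jantzen:2003}, this $\Ext$-group is non-zero only if the trivial module $L(0)$ appears as a composition factor of $H^0(\lambda)$.

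To preclude this, I would apply the exact translation functor $T_0^{\omega_8}$. By \cite[II.7.15]{Jantzen:2003}, $T_0^{\omega_8}(L(0)) \cong L(\omega_8) \neq 0$, so it suffices to prove $T_0^{\omega_8}(H^0(\lambda)) = 0$ for each of the two weights $\lambda$. By \cite[II.7.11]{Jantzen:2003}, if we can write $\lambda = w \cdot 0$ for a suitable $w \in W_p$, then $T_0^{\omega_8}(H^0(\lambda)) \cong H^0(w \cdot \omega_8)$, and this vanishes whenever $w \cdot \omega_8 \notin X(T)_+$.

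The concrete computations would be as follows. For $\lambda = \omega_7 + \omega_8$, take $w = s_0 s_8$, where $s_0$ is the affine reflection $s_{\alpha_0,p}$ at $p = 31$; one verifies that $(s_0 s_8) \cdot 0 = \omega_7 + \omega_8$ and that $(s_0 s_8) \cdot \omega_8 = 2\omega_7 - \omega_8$, which has a negative fundamental-weight coefficient and is therefore not dominant. For $\lambda = \omega_6 + \omega_8$, take $w = s_0 s_8 s_7$; a similar calculation gives $(s_0 s_8 s_7) \cdot 0 = \omega_6 + \omega_8$ and $(s_0 s_8 s_7) \cdot \omega_8 = \omega_6 + \omega_7 - \omega_8$, again not dominant.

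The main obstacle is identifying the correct Weyl group element $w$ for each $\lambda$: one needs both $w \cdot 0 = \lambda$ and $w \cdot \omega_8 \notin X(T)_+$, and for this to be possible one must use the affine reflection $s_0$ at $p = 31$ rather than only finite Weyl reflections. Once the correct $w$ has been guessed, the verification amounts to routine applications of the dot action $s_\beta \cdot \mu = s_\beta(\mu + \rho) - \rho$, but in $E_8$ at $p=31$ it requires careful bookkeeping with the Cartan matrix and the explicit form of $\rho$.
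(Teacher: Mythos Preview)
Your proposal is correct and follows essentially the same approach as the paper: reduce to $\opH^1(G,L(\lambda))$ via Theorem~\ref{maintheorem1}, use \cite[II.2.14]{Jantzen:2003} to reduce to showing $L(0)$ is not a composition factor of $H^0(\lambda)$, and then apply the exact translation functor $T_0^{\omega_8}$ together with \cite[II.7.11, II.7.15]{Jantzen:2003}. You even choose the same affine Weyl group elements $s_0 s_8$ and $s_0 s_8 s_7$ and obtain the same non-dominant images $2\omega_7-\omega_8$ and $\omega_6+\omega_7-\omega_8$ as the paper.
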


In summary, for $p > 3$ (and $p>5$ when $\Phi=E_8$), and for $\lambda$ less than or equal to a fundamental dominant weight, we have computed all cohomology groups $\opH^1(\Gfq,L(\lambda))$ except for 2 cases  in type $E_7$, $\lambda=2\omega_{7},\ p=5$, and $\lambda=\omega_{2}+\omega_{7},\ p=7$ indicated in Figure \ref{figure:HasseE7}, and the 3 cases $\lambda \in \set{2\omega_7,\omega_1+\omega_7,\omega_2+\omega_8}$ for $p=7$ in type $E_8$ indicated in Figure \ref{figure:HasseE8}. In all the cases we have computed, we have found $\dim \opH^1(\Gfq,L(\lambda)) \le 1$.

\section{Appendix}

\subsection{Hasse diagrams for fundamental weights} \label{subsection:Hassediagrams}

In this section we describe the restriction of the partial ordering $\leq$ on $X(T)$ to the dominant weights $\lambda \in X(T)_+$ satisfying $\lambda \leq \omega_j$ for some fundamental dominant weight $\omega_j$. If $\Phi$ has classical type $A_n,B_n,C_n$ or $D_n$, then all such $\lambda$ are themselves fundamental dominant weights (or 0).

\bigskip

\noindent \textbf{Type $\mathbf{A_n}$.} Each fundamental dominant weight $\omega_j$ is minimal with respect to $\leq$.

\bigskip

\noindent \textbf{Type $\mathbf{B_n}$.} The weight $\omega_n$ is minimal with respect to $\leq$. The remaining fundamental dominant weights satisfy $\omega_{n-1} > \omega_{n-2} > \cdots > \omega_2 > \omega_1 > 0$.

\bigskip

\noindent \textbf{Type $\mathbf{C_n}$.} Set $\omega_0 = 0$. Then the fundamental dominant weights form two independent chains, $\omega_n > \omega_{n-2} > \omega_{n-4} > \cdots$ and $\omega_{n-1} > \omega_{n-3} > \omega_{n-5} > \cdots$, with $\omega_0$ appearing as the smallest term in the chain having even indices, and $\omega_1$ appearing as the smallest term in the chain with odd indices.

\bigskip

\noindent \textbf{Type $\mathbf{D_n}$.} The weights $\omega_n$, $\omega_{n-1}$, and $\omega_1$ are minimal with respect to $\leq$. The remaining fundamental dominant weights satisfy $\omega_{n-2} > \omega_{n-3} > \cdots > \omega_1$.

\bigskip

\noindent \textbf{Exceptional types.} The Hasse diagrams for the exceptional types are given in Figures \ref{figure:HasseE6}--\ref{figure:HasseG2}. If the weight $\mu$ appears below and is connected to the weight $\lambda$ by a line, then $\mu < \lambda$. White boxes with black borders indicate that the given weight is conjugate (i.e., linked) to $0$ under the dot action of the affine Weyl group $W_p$ for the primes shown. Linkage relations were verified for $5 \leq p \leq 31$ using the computer program GAP \cite{GAP4}.

  \pgfdeclarelayer{background} 
  \pgfdeclarelayer{foreground} 
  \pgfsetlayers{background,main,foreground}

  \tikzstyle{wt}=[dashed,fill=black!0,text=black,rounded corners]
  \tikzstyle{prime} = [scale=.8, text=black, draw=black!50, rounded corners, draw, thin, fill=white]
  \tikzstyle{linkw0}=[rounded corners,draw=black!50,thin,fill=orange!0,text=black]
  
  \tikzset{every picture/.style={line width=.8pt}}

\begin{figure}[htbp]
\begin{tikzpicture}[scale=1]%
  \draw (0,0) node[wt] (om5) {$\omega_5$}
  -- ++ (0,-1) node[wt] (om1) {$\omega_1$};

  \draw (2,0) node[wt] (om4) {$\omega_4$}
  -- ++ (0,-1) node[wt] (om1om6) {$\omega_1 + \omega_6$}
  -- ++ (0,-1) node[wt] (om2) {$\omega_2$}
  -- ++ (0,-1) node[linkw0] (om0) {$0$};

  \draw (4,0) node[wt] (om3) {$\omega_3$}
  -- ++ (0,-1) node[wt] (om6) {$\omega_6$};
\end{tikzpicture}
\caption{Hasse diagram for $E_6$.} \label{figure:HasseE6}
\end{figure}

\begin{figure}[htbp]
\begin{tikzpicture}[scale=1]%
  \draw (0,0) node[wt] (om4) {$\omega_4$}
  -- ++ (0,-1) node[wt] (om1om6) {$\omega_1 + \omega_6$}
  ++ (1,-1) node[linkw0] (2om1) {$2 \omega_1$} +(.45,.2) node[prime] {19}
  ++ (-2,0) node[linkw0] (om2om7) {$\omega_2+\omega_7$} +(.75,.25) node[prime] {7}
  + (1,-1) node[wt] (om3) {$\omega_3$}
  ++ (-1,-1) node[linkw0] (2om7) {$2 \omega_7$} +(.38,.2) node[prime] {5}
  ++ (1,-1) node[linkw0] (om6) {$\omega_6$} +(.38,.2) node[prime] {7}
  ++ (0,-1) node[wt] (om1) {$\omega_1$}
  -- ++ (0,-1) node[linkw0] (om0) {$0$};

  \begin{pgfonlayer}{background}
    \pgfsetlinewidth{0.8pt}
  \draw (2om1) -- (om1om6) -- (om2om7) -- (2om7) -- (om6) -- (om1);
  \draw (2om1) -- (om3) -- (om2om7) -- (om3) -- (om6);

  \draw (3,-1) node[wt] (om5) {$\omega_5$}
  -- ++ (0,-1) node[wt] (om1om7) {$\omega_1 + \omega_7$}
  -- ++ (0,-1) node[wt] (om2) {$\omega_2$}
  -- ++ (0,-1) node[wt] (om7) {$\omega_7$};
  \end{pgfonlayer}
\end{tikzpicture}
\caption{Hasse diagram for $E_7$.} \label{figure:HasseE7}
\end{figure}

\begin{figure}[htbp]
\begin{tikzpicture}[scale=1]%
  \draw (0,0) node[linkw0] (om4) {$\omega_4$} +(.35,.25) node[prime] {5}
  ++ (0,-1) node[wt] (om1om6) {$\omega_1 + \omega_6$}
  ++ (-1,-1) node[wt] (2om1om8) {$2\omega_1 + \omega_8$}
  ++ (2,0) node[wt] (om2om7) {$\omega_2 + \omega_7$}
  ++ (1,-1) node[linkw0] (2om7) {$2 \omega_7$} +(.38,.25) node[prime] {7}
  ++ (-2,0) node[wt] (om3om8) {$\omega_3 + \omega_8$}
  ++ (-1,-1) node[linkw0] (om1om2) {$\omega_1 + \omega_2$} +(.75,.25) node[prime] {5}
  ++ (2,0) node[linkw0] (om6om8) {$\omega_6 + \omega_8$} +(.75,.25) node[prime] {31}
  ++ (1,-1) node[wt] (om12om8) {$\omega_1 + 2 \omega_8$}
    + (2.3,-1.3) node[wt] (3om8) {$3 \omega_8$}
  ++ (-2,0) node[linkw0] (om5) {$\omega_5$} +(.38,.2) node[prime] {5}
  ++ (1,-1) node[linkw0] (om1om7) {$\omega_1 + \omega_7$} +(.75,.25)
  node[prime] {7}
  ++ (-1,-1) node[linkw0] (2om1) {$2 \omega_1$} +(.38,.2) node[prime] {5}
  ++ (2,0) node[linkw0] (om2om8) {$\omega_2 + \omega_8$} +(.8,.3)
  node[prime] {5, 7}
  ++ (1,-1) node[linkw0] (om7om8) {$\omega_7 + \omega_8$} +(.75,.25) node[prime] {31}
  ++ (-2,0) node[linkw0] (om3) {$\omega_3$} +(.38,.2) node[prime] {7}
  ++ (1,-1) node[linkw0] (om6) {$\omega_6$} +(.38,.2) node[prime] {5}
  ++ (0,-1) node[linkw0] (om1om8) {$\omega_1 + \omega_8$} +(.75,.25) node[prime] {5}
  ++ (-1,-1) node[linkw0] (om2) {$\omega_2$} +(.38,.2) node[prime] {5}
  ++ (2,0) node[linkw0] (2om8) {$2 \omega_8$} +(.4,.25) node[prime] {31}
  ++ (-1,-1) node[linkw0] (om7) {$\omega_7$} +(.38,.2) node[prime] {5}
  ++ (0,-1) node[wt] (om1) {$\omega_1$}
  ++ (0,-1) node[linkw0] (om8) {$\omega_8$} +(.38,.2) node[prime] {5}
  ++ (0,-1) node[linkw0] (om0) {$0$}
  ;

  \begin{pgfonlayer}{background}
    \pgfsetlinewidth{.8pt}
  \draw (om4) -- (om1om6)
  -- (2om1om8) -- (om3om8) -- (om2om7) -- (2om7)
  -- (om6om8) -- (om3om8) -- (om1om2)
  -- (om5) -- (om6om8) -- (om12om8)
  -- (3om8) -- (om7om8) -- (om2om8) -- (om3)
  -- (om6) -- (om1om8) -- (om2) -- (om7)
  -- (om1) -- (om8) -- (om0);

  \draw (om1om6) -- (om2om7);
  \draw (om3) -- (2om1) -- (om1om7) -- (om2om8);
  \draw (om5) -- (om1om7) -- (om12om8);
  \draw (om7om8) -- (om6);
  \draw (om1om8) -- (2om8) -- (om7);
  \end{pgfonlayer}
\end{tikzpicture}
\caption{Hasse diagram for $E_8$.} \label{figure:HasseE8}
\end{figure}

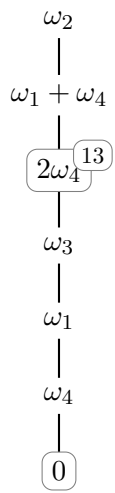
\begin{figure}[htbp]
\begin{tikzpicture}[scale=1]%
  \draw (0,0) node[wt] (om2) {$\omega_2$}
  -- ++ (0,-1) node[wt] (om1om4) {$\omega_1 + \omega_4$}
  -- ++ (0,-1) node[linkw0] (2om4){$2 \omega_4$} +(.45,.2) node[prime] {13} 
  + (0,0)
  -- ++ (0,-1) node[wt] (om3){$\omega_3$}
  -- ++ (0,-1) node[wt] (om1) {$\omega_1$}
  -- ++ (0,-1) node[wt] (om4) {$\omega_4$}
  -- ++ (0,-1) node[linkw0] (om0) {$0$}
  ;
\end{tikzpicture}
\caption{Hasse diagram for $F_4$} \label{figure:HasseF4}
\end{figure}

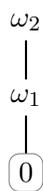
\begin{figure}[htbp]
\begin{tikzpicture}[scale=1]%
  \draw (0,0) node[wt] (om2) {$\omega_2$}
  -- ++ (0,-1) node[wt] (om1) {$\omega_1$}
  -- ++ (0,-1) node[linkw0](om0) {$0$}
  ;
\end{tikzpicture}
\caption{Hasse diagram for $G_2$} \label{figure:HasseG2}
\end{figure}

\clearpage

\subsection{Weights of induced modules} \label{subsection:fundamentalweights}

In this section we analyze the weights $\nu$ of induced modules $H^0(\tau)$, when the root system has rank $\leq 2$. Given a dominant weight $\tau$, and given a weight $\nu$ of $H^0(\tau)$, one has $\tau - \nu = \theta$ for some $\theta \in \N\Phi^+$. For each possible restriction $\tau$ of a weight $\lambda$ occurring in Section \ref{subsection:Hassediagrams} to a rank one or rank two root subsystem, we list the corresponding possible values for $\nu$ and $\theta$. When $\Phi$ has rank 2, write $\Delta = \set{\alpha_1,\alpha_2}$, and let $\omega_1$ and $\omega_2$ denote the corresponding fundamental dominant weights.

\bigskip

\noindent \textbf{Type $\mathbf{A_1}$.} In this case there is only a single fundamental dominant weight $\omega$. Write $\Delta = \set{\alpha}$. The weights of $H^0(\omega)$, $H^0(2\omega)$, and $H^0(3\omega)$ are given in Table \ref{table:A1}.

\begin{table}[htbp]
\begin{tabular}{||r|r||r|r||r|r||}
\hline
$H^{0}(\omega)$ &  & $H^{0}(2\omega)$   &   &  $H^{0}(3\omega)$ &    \\
\hline
\hline
$\nu$ & $\theta$ & $\nu$ & $\theta$  & $\nu$ & $\theta$ \\
\hline
$\omega$ & $0$ & $2\omega$ & $0$   &    $3\omega$ & $0$ \\
$-\omega$ & $\alpha$ &  $0$ & $\alpha$ &  $\omega$ & $\alpha$ \\
                    &                   & $-2\omega$ & $2\alpha$ &  $-\omega$ & $2\alpha$ \\
                    &                    &                      &                     &  $-3\omega$ & $3\alpha$ \\

\hline
\end{tabular}
\\[6pt]
\caption{Type $A_1$, weights of $H^0(\omega)$, $H^{0}(2\omega)$, $H^{0}(3\omega)$.} \label{table:A1}
\end{table}

\noindent \textbf{Type $\mathbf{A_1 \times A_1}$.} The weights of $H^0(\omega_1+\omega_2)$ and $H^0(2\omega_1+\omega_2)$ are given in Table \ref{table:A1A1}. The weights of $H^0(\omega_1)$, $H^0(2\omega_1)$, and $H^0(3 \omega_1)$ can be deduced from the data for Type $A_1$ in Table \ref{table:A1}. Since the situation is symmetric with respect to the ordering of the fundamental dominant weights, the weights of $H^0(\omega_2)$, $H^0(2\omega_2)$, and $H^0(3\omega_2)$ can also be deduced from the data in Table \ref{table:A1}.

\begin{table}[htbp]
\begin{tabular}{||r|r||r|r||r|r||r|r||}
\hline
$H^{0}(\omega_{1}+\omega_{2})$   &     & $H^{0}(2\omega_{1}+\omega_{2})$   &    \\
\hline 
$\nu$ & $\theta$ &$\nu$ & $\theta$     \\
\hline
$\omega_1+\omega_2$ & $0$   &$2\omega_1+\omega_2$ & $0$ \\
$\omega_1-\omega_2$ & $\alpha_2$ & $2\omega_1-\omega_2$ & $\alpha_2$ \\
$-\omega_1+\omega_2$ & $\alpha_1$ & $\omega_2$ & $\alpha_1$ \\
$-\omega_1-\omega_2$ & $\alpha_1+\alpha_2$ &  $-\omega_2$ & $\alpha_1+\alpha_2$ \\
&                                        &    $-2\omega_{1}+\omega_{2}$ & $2\alpha_{1}$ \\
&                                        &      $-2\omega_1-\omega_2$ & $2\alpha_1+\alpha_2$ \\                            
\hline
\end{tabular}
\\[6pt]
\caption{Type $A_1 \times A_1$, weights of $H^{0}(\omega_{1}+\omega_{2})$,  $H^{0}(2\omega_{1}+\omega_{2})$.} \label{table:A1A1}
\end{table}

\noindent \textbf{Type $\mathbf{A_2}$.} The weights of $H^0(\omega_1)$, $H^0(2\omega_1)$, and $H^0(3\omega_1)$ are given in Table \ref{table:A2}. Again, completely analogous results are obtained for the weights of $H^0(\omega_2)$, $H^0(2\omega_2)$, and $H^0(3\omega_2)$. The weights of $H^0(\omega_1+\omega_2)$ are also given in Table \ref{table:A2}.

\bigskip

\begin{table}[htbp]
\begin{tabular}{||r|r||r|r||r|r||r|r||}
\hline 
$H^{0}(\omega_{1})$ &     & $H^0(2\omega_1)$ &      & $H^0(3\omega_1)$   &         & $H^0(\omega_1+\omega_2)$    &         \\
\hline 
\hline
$\nu$ & $\theta$ & $\nu$ & $\theta$ & $\nu$ & $\theta$ & $\nu$ & $\theta$ \\
\hline
$\omega_1$ & $0$  & $2\omega_1$ & $0$ & $3\omega_1$ & 0 &$\omega_1+\omega_2$ & $0$ \\
$-\omega_1+\omega_2$ & $\alpha_1$ & $\omega_2$ & $\alpha_1$ & $\omega_1+\omega_2$ & $\alpha_1$ & $2\omega_1-\omega_2$ & $\alpha_2$ \\
$-\omega_2$ & $\alpha_1+\alpha_2$ & $\omega_1-\omega_2$ & $\alpha_1+\alpha_2$ & $2\omega_1-\omega_2$ & $\alpha_1+\alpha_2$ & $-\omega_1+2\omega_2$ & $\alpha_1$ \\
                         &                                         &            $-2\omega_1+2\omega_2$ & $2\alpha_1$ & $-\omega_1+2\omega_2$ & $2\alpha_1$ &  $0$ & $\alpha_1+\alpha_2$ \\
                         &                                         &      $-\omega_1$ & $2\alpha_1+\alpha_2$ & $0$ & $2\alpha_1+\alpha_2$ & $\omega_1-2\omega_2$ & $\alpha_1+2\alpha_2$ \\
                         &                                         &     $-2\omega_2$ & $2\alpha_1+2\alpha_2$ & $-3\omega_1+3\omega_2$ & $3\alpha_1$ & $-2\omega_1+\omega_2$ & $2\alpha_1+\alpha_2$ \\
                         &                                         &                                 &                                              & $\omega_1-2\omega_2$ & $2\alpha_1+2\alpha_2$ & $-\omega_1-\omega_2$ & $2\alpha_1+2\alpha_2$ \\
                         &                                         &                                 &                                              & $-2\omega_1+\omega_2$ & $3\alpha_1+\alpha_2$ &         &      \\
                         &                                         &                                 &                                              & $-\omega_1-\omega_2$ & $3\alpha_1+2\alpha_2$  &                                              &              \\
                         &                                         &                                 &                                              &   $-3\omega_2$ & $3\alpha_1+3\alpha_2$     &                  &     \\                                                                        
\hline
\end{tabular}
\\[6pt]
\caption{Type $A_2$, weights of $H^0(\omega_1)$,  $H^0(2\omega_1)$,  $H^0(3\omega_1)$,  $H^0(\omega_1+\omega_2)$.} \label{table:A2}
\end{table}

\noindent \textbf{Type $\mathbf{B_2}$.} Assume that $\alpha_1$ is long and that $\alpha_2$ is short. The weights of $H^0(\omega_1)$ and $H^0(\omega_2)$ are given in Table \ref{table:B2}.

\bigskip

\begin{table}[htbp]
\begin{tabular}{||r|r||r|r||}
\hline
$H^{0}(\omega_{1})$ &   & $H^{0}(\omega_{2})$ &     \\
\hline
\hline 
$\nu$ & $\theta$ &  $\nu$ & $\theta$ \\
\hline
$\omega_1$ & $0$ &  $\omega_2$ & $0$ \\
$-\omega_1+2\omega_2$ & $\alpha_1$ & $\omega_1-\omega_2$ & $\alpha_2$ \\
$0$ & $\alpha_1+\alpha_2$ & $-\omega_1+\omega_2$ & $\alpha_1+\alpha_2$ \\
$\omega_1-2\omega_2$ & $\alpha_1+2\alpha_2$ & $-\omega_2$ & $\alpha_1+2\alpha_2$ \\
$-\omega_1$ & $2\alpha_1+2\alpha_2$ &                &   \\
\hline
\end{tabular}
\\[6pt]
\caption{Type $B_2$, weights of $H^0(\omega_1)$, $H^0(\omega_2)$.} \label{table:B2}
\end{table}

\noindent \textbf{Type $\mathbf{G_2}$.} Assume that $\alpha_1$ is short and that $\alpha_2$ is long. The weights of $H^0(\omega_1)$ and $H^0(\omega_2)$ are given in Table~\ref{table:G2}.

\begin{table}[htbp]
\begin{tabular}{||r|r||r|r||}
\hline
$H^{0}(\omega_{1})$ &   & $H^{0}(\omega_{2})$  &     \\
\hline 
\hline
$\nu$ & $\theta$ &  $\nu$ & $\theta$ \\
\hline
$\omega_1$ & $0$ & $\omega_2$ & $0$ \\
$-\omega_1+\omega_2$ & $\alpha_1$ &  $3\omega_1-\omega_2$ & $\alpha_2$ \\
$2\omega_1-\omega_2$ & $\alpha_1+\alpha_2$ &  $\omega_1$ & $\alpha_1+\alpha_2$ \\
$0$ & $2\alpha_1+\alpha_2$ &  $-\omega_1+\omega_2$ & $2\alpha_1+\alpha_2$ \\
$-2\omega_1+\omega_2$ & $3\alpha_1+\alpha_2$ &  $2\omega_1-\omega_2$ & $2\alpha_1+2\alpha_2$ \\
$\omega_1-\omega_2$ & $3\alpha_1+2\alpha_2$ & $-3\omega_1+2\omega_2$ & $3\alpha_1+\alpha_2$ \\
$-\omega_1$ & $4\alpha_1+2\alpha_2$ & $0$ & $3\alpha_1+2\alpha_2$ \\  
                          &                                             &      $3\omega_1-2\omega_2$ & $3\alpha_1+3\alpha_2$ \\
                          &                                             &      $-2\omega_1+\omega_2$ & $4\alpha_1+2\alpha_2$ \\
                          &                                             &   $\omega_1-\omega_2$ & $4\alpha_1+3\alpha_2$ \\   
                          &                                             &    $-\omega_1$ & $5\alpha_1+3\alpha_2$ \\  
                          &                                             & $-3\omega_1+\omega_2$ & $6\alpha_1+3\alpha_2$ \\     
                          &                                             &  $-\omega_2$ & $6\alpha_1+4\alpha_2$ \\                                                        
\hline
\end{tabular}
\\[6pt]
\caption{Type $G_2$, weights of $H^0(\omega_1)$, $H^{0}(\omega_{2})$.} \label{table:G2}
\end{table}

\clearpage 

\section{VIGRE Algebra Group at the University of Georgia}

\subsection{} This project was initiated during Fall Semester 2009 under the Vertical Integration of Research and Education (VIGRE) Program sponsored by the National Science Foundation (NSF) at the Department of Mathematics at the University of Georgia (UGA). We would like to acknowledge the NSF VIGRE grant DMS-0738586 for its financial support of the project. The VIGRE Algebra Group at UGA consists of 5 faculty members, 3 postdoctoral fellows, and 7 graduate students. The group is led by Brian D.\ Boe, Jon F.\ Carlson, Leonard Chastkofsky, and Daniel K.\ Nakano. The email addresses of the group members are given below.

\bigskip
\begin{tabbing}
\hspace*{\parindent}\=Christopher M. Drupieski\ \ \ \= \kill
Faculty: \\[3pt]
\>Brian D.\ Boe \>brian@math.uga.edu \\
\>Jon F.\ Carlson \> jfc@math.uga.edu \\
\>Leonard Chastkofsky \> lenny@math.uga.edu \\
\>Daniel K.\ Nakano \> nakano@math.uga.edu \\
\>Lisa Townsley \> townsley@math.uga.edu\\[6pt]
Postdoctoral Fellows:  \\[3pt]
\>Christopher M.\ Drupieski \> cdrup@math.uga.edu \\
\>Niles Johnson \> njohnson@math.uga.edu \\
\>Benjamin F.\ Jones \> jonesbe@uwstout.edu \\[6pt]
Graduate Students:  \\[3pt]
\>Adrian M.\ Brunyate \> brunyate@math.uga.edu \\
\>Wenjing Li \> wli@math.uga.edu \\
\>Nham Vo Ngo \> nngo@math.uga.edu \\
\>Duc Duy Nguyen \> dnguyen@math.uga.edu \\
\>Brandon L.\ Samples \> bsamples@math.uga.edu \\
\>Andrew J.\ Talian \> atalian@math.uga.edu \\
\>Benjamin J.\ Wyser \> bwyser@math.uga.edu \\
\end{tabbing}

\subsection{Acknowledgements}

The authors would like to acknowledge useful discussions with Robert Guralnick on low degree cohomology for finite groups.


\newcommand{\noopsort}[1]{}
\providecommand{\bysame}{\leavevmode\hbox to3em{\hrulefill}\thinspace}

\end{document}